\newtheorem{theorem}{Theorem}[section]
\newtheorem{lemma}[theorem]{Lemma}
\newtheorem{definition}[theorem]{Definition}
\newtheorem{example}[theorem]{Example}
\newtheorem{remark}[theorem]{Remark}
\newtheorem{corollary}[theorem]{Corollary}
\newtheorem{proposition}[theorem]{Proposition}
\def\ld{\mathord{\backslash}}
\def\rd{\mathord{/}}
\def\ld{\mathord{\backslash}}
\def\rd{\mathord{/}}
\newcommand{\bss}{\mathbin{\backslash\mkern-6mu\backslash}}
\begin{document}
\title[Kites and Residuated Lattices]{Kites and Residuated Lattices}
\author[Michal Botur, Anatolij Dvure\v{c}enskij]{Michal Botur$^1$, Anatolij Dvure\v censkij$^{1,2}$}
\date{}%
\thanks{Both authors gratefully acknowledge  the support by GA\v{C}R 15-15286S. AD thanks also the grants APVV-16-0073 and VEGA No. 2/0069/16 SAV}
\address{$^1$Palack\' y University Olomouc, Faculty of Sciences, t\v r. 17.listopadu 12, CZ-771 46 Olomouc, Czech Republic}
\address{$^2$Mathematical Institute, Slovak Academy of Sciences, \v{S}tef\'anikova 49, SK-814 73 Bratislava, Slovakia}
\email{michal.botur@upol.cz, dvurecen@mat.savba.sk}

\keywords{Residuated lattice, kite algebra, subdirect irreducible kite, classification of kites}
\subjclass[2010]{03G10, 03B50}

\maketitle

\begin{abstract}
We investigate a construction of an integral residuated lattice starting from an integral residuated lattice and two sets with an injective mapping from one set into the second one. The resulting algebra has a shape of a Chinese cascade kite, therefore, we call this algebra simply a kite. We describe subdirectly irreducible kites and we classify them. We show that the variety of integral residuated lattices generated by kites is generated by all finite-dimensional kites. In particular, we describe some homomorphisms among kites.

\end{abstract}
\date{}%

\section{Introduction}

There are many lattice-ordered structures that are very tightly connected with lattice-ordered groups (= $\ell$-groups). Such situations are observed for example with MV-algebras, an algebraic semantics of the inifinite-valued \L ukasiewicz logic, see \cite{Cha}, when by \cite{Mun}, every MV-algebra is an interval in a unique Abelian $\ell$-group with strong unit and vice versa. Similarly, every  pseudo MV-algebra, a non-commutative generalization of MV-algebras introduced in \cite{GeIo, Rac}, is an interval in a unital $\ell$-group not necessarily Abelian, and vice-versa. Moreover, there is a categorical equivalence of the category of pseudo MV-algebras and the category of unital $\ell$-groups, see \cite{DvuA}. BL-algebras, introduced by H\'ajek \cite{Haj}, are an algebraic semantics of the classical fuzzy logic generalizing MV-algebras, and pseudo BL-algebras are a non-commutative generalization of BL-algebras which were introduced in \cite{DGI1, DGI2}. By \cite{AgMo, DvAgl}, every linearly ordered pseudo BL-algebra can be decomposed into a family of negative cones  and one negative interval of some linearly ordered groups. These algebras give important cases of integral residuated lattices which are connected with $\ell$-groups.

Jipsen and Montagna \cite{JiMo} constructed a subdirectly irreducible pseudo BL-algebra starting from the negative and positive cone of the $\ell$-group $\mathbb Z$ of integers that was not a linearly ordered pseudo BL-algebra  and no BL-algebra. This example was used in \cite{DGK} to show that an open problem from \cite[Problem 3.21]{DGI2}) whether in every pseudo BL-algebra left negation and right negation mutually commute has a negative solution. Because the example resembles a kite with $(\mathbb Z^-)^2$ as a head and $\mathbb Z^+$ as a tail, this examples was said to be a kite. This construction was extended in \cite{DvKo} for an arbitrary $\ell$-group and the resulting algebra is a pseudo BL-algebra, called also a kite pseudo BL-algebra. The basic properties of kites, subdirectly irreducible kites, classification of kites, and situations when a kite gives a pseudo MV-algebra are described in \cite{DvKo} in details.

The aim of the present paper is to give a new type of a construction of an integral residuated lattice starting from an integral residuated lattice, with two sets $I_0$ and $I_1$ satisfying $I_1\subseteq I_0$ and with an injective mapping $\lambda: I_1\to I_0$. The resulting algebra will have a shape of a Chinese cascade kite, therefore it will be called a kite residuated lattice or simply a kite. These new types of integral residuated
lattices enrich theory of residuated lattices and also show a way how residuated lattices can start in particular from $\ell$-groups.

The paper is organized as follows. Basic notions on residuated lattices are presented in Section 2. Section 3 presents a construction of kite residuated lattices. In Section 4 we give some important examples of the construction of kite residuated lattices. Subdirectly irreducible kites are completely described together with classification in Section 5. In particular, we show that a necessary condition to be a kite subdirectly irreducible is that the set $I_0$ is at most countably infinite. We also prove that every kite is a subdirect product of subdirectly irreducible kites. Infinite-dimensional and finite-dimensional kites are described in Section 6, and we show that the variety generated by all kites is generated by all finite-dimensional kites. Finally, Section 7 describes some homomorphisms between two kites.

\section{Basic Notions and Notations}

We say that an algebra $\mathbf G=(G;\wedge,\vee,\cdot, \backslash,/,e)$ of type $\langle 2,2,2,2,2,0\rangle$ is a {\it residuated lattice} if $(G;\wedge,\vee)$ is a lattice such that $(G;\cdot, \backslash,/,e)$ is a residuated monoid, i.e. the product (or multiplication) $\cdot$ is associative with unit element $e$, and $x\cdot y \le z$ iff $y\le x\backslash z$ iff $x \le z/y$ for all $x,y,z \in G$.

A residuated  $(G;\wedge,\vee,\cdot, \backslash,/,e)$  is said to be an {\it integral residuated lattice} if the unit element $e$  satisfies $x \le e$ for each $x\in G$.

The operations $\backslash$ and $/$ are called the {\it left residuation} (or the {\it left division}) and the {\it right residuation} (or the {\it right division}), respectively. Multiplications bind stronger than multiplication, which binds stronger than divisions, which in turn binds stronger than the lattice operations $\wedge$ and $\vee$. For more information about residuated lattices see \cite{BlTs, GaTs}.

Now we introduce some equalities
\begin{enumerate}
\item[{\rm (i)}] $x(x\ld y)=x\wedge y= (y \rd x)x$ \quad  ({\it divisibility}),
\item[{\rm (ii)}]  $x\ld y \vee y\ld x = 1 = y\rd x \vee x\rd y$\quad  ({\it prelinearity}),
\item[{\rm (iii)}] $xy = yx$ \quad ({\it commutativity}),

\item[{\rm (iv)}] $x\rd(y\ld x) = x\vee y = (x\rd y)\ld x$.
\end{enumerate}

An integral residuated lattice $\mathbf G$ with a special element $0$ such that $0\le x$ for each $x \in G$ is said to be (1) a {\it pseudo MV-algebra} if identity (iv) holds. A pseudo MV-algebra with commutativity is said to be an {\it MV-algebra}; (2) a {\it pseudo BL-algebra} if divisibility and prelinearity holds in $\mathbf G$. A commutative pseudo BL-algebra is a {\it BL-algebra}. An integrated lattice $\mathbf G$ is (3) a {\it GBL-algebra} if it satisfied divisibility, and a GBL-algebra satisfying prelinearity is said to be a {\it basic pseudo hoop}.

For example, let $\mathbf G=(G;\wedge,\vee,\cdot,^{-1},e)$ be an $\ell$-group and let $G^-:=\{g \in G\colon g \le e\}$ be the negative cone. Then $\mathbf G^-=(G^-\colon \wedge,\vee,\cdot, \ld,\rd,e)$, where $\cdot$ is the group multiplication in $G$, $x\ld y:= (x^{-1}y)\wedge e$, $y\rd x= (yx^{-1})\wedge e$ for $x,y \in G^-$, is an integral residuated lattice. The class $\mathcal{LG}^-$ of negative cones of $\ell$-groups is a variety whose each member is cancellative \cite[Thm 2.12]{GaTs}, and since the group $\mathbb Z$ generates the variety of Abelian $\ell$-groups, the negative cone $\mathbf Z^-=(\mathbb Z^-; \wedge,\vee,\cdot,\ld,\rd,0)$ generates the variety $\mathcal{ALG}^-$ of negative cones of Abelian $\ell$-groups. If $\mathbf G$ is a doubly transitive permutation group, then the variety generated by $\mathbf G^-$ generates the whole variety $\mathcal{LG}^-$, see \cite[Lem 10.3.1]{Gla}.

\section{Kites Residuated Lattices}

We present a construction of kite residuated lattices starting from an integral residuated lattice.

Let us have two sets $I_1$ and $I_0$ with $I_1\subseteq I_0$ and an injective mapping $\lambda\colon I_1\longrightarrow I_0$. We define inductively, for each integer $n\ge 1$, the following sets
$$
I_{n+1}=\{i\in I_n\colon \lambda (i)\in I_n\}.
$$
Clearly, if $i\in I_n$ then $\lambda (i)\in I_{n-1}$, and consequently, $\lambda ^{m-n}\colon I_m\longrightarrow I_n$ is a correctly defined mapping (for any $m,n\in\mathbb N$ such that $n\leq m$). As usually, by $\mathbb N$ we denote the set of all integers $n\ge 0$.

Let $\mathbf G=(G;\wedge,\vee,\cdot, \backslash,/,e)$ be an integral residuated lattice. As usually, in residuated monoids, multiplication has higher priority than divisions, and divisions are stronger than lattice connectives $\vee$ and $\wedge$. For any element $x \in G$, we define $x^0=e$ and $x^{n+1}=x^n \cdot x$, $n\ge 0$.

We define a (lexicographic) order on the set
$$
\biguplus_{n\in \mathbb N} G^{I_n}
$$
by $\langle x_i\colon i\in I_n\rangle\leq \langle y_i \colon i\in I_m\rangle$ if and only if  $m<n$ holds or $m=n$ and $x_i\leq y_i$ for all $i\in I_n$. It is clear that $(\biguplus_{n\in \mathbb N} G^{I_n};\leq)$ is a lattice-ordered set. If we denote by $1$ a unique element belonging to $G^{I_0}$  satisfying $1(i)=e$ for any $i\in I_0$, then $1$ is the top element of $\biguplus_{n\in \mathbb N} G^{I_n}$. We notice that it can happen that some $I_n$ is the empty set. Then $G^{I_n}$ is a singleton, we denote it e.g. as $G^{I_n}=\{\langle e\colon i \in I_n \rangle \}$.

Moreover, we define operations $\cdot$, $\sslash$, and $\bss$, product, right division and left division, on the set $\biguplus_{n\in \mathbb N} G^{I_n}$ as follows:
\begin{itemize}
\item[($\cdot$)] For $\langle x_i\colon i\in I_m\rangle, \langle y_i\colon i\in I_n\rangle\in  \biguplus_{n\in \mathbb N} G^{I_n}$, we set
$$\langle x_i\colon i\in I_m\rangle\cdot \langle y_i\colon i\in I_n\rangle=\langle x_{\lambda^n (i)}y_i\colon i\in I_{m+n} \rangle.$$
\item[($\sslash$)]  For $\langle x_i\colon i\in I_m\rangle, \langle y_i\colon i\in I_n\rangle\in  \biguplus_{n\in \mathbb N} G^{I_n}$, we set
$$
\langle y_i\colon i\in I_n\rangle \sslash \langle x_i\colon i\in I_m\rangle =\left\{\begin{array}{lll}
\langle (y\sslash x)_i\colon i\in I_{n-m}\rangle & \mbox{ if } & m\leq n\\
1 &\mbox{ if } & m > n,\\
\end{array}\right.$$
where

$$(y\sslash x)_i=\left\{\begin{array}{lll}
y_{\lambda^{-m}(i)}/x_{\lambda^{-m}(i)} &\mbox{ if }& i\in I_{n-m}\cap\lambda^m (I_n)\\
e &\mbox{ if }& i\in I_{n-m}\setminus \lambda^m (I_n).\end{array}\right.
$$
The injectivity of the mapping $\lambda^m$ guarantees the existence of $\lambda^{-m}$ defined on its domain, and $\lambda^m(I_n)\subseteq I_{n-m}$ if $m\le n$, so that $I_{n-m}\cap\lambda^m (I_n)=\lambda^m (I_n)$.
\item[($\bss$)] For $\langle x_i\colon i\in I_m\rangle, \langle y_i\colon i\in I_n\rangle\in  \biguplus_{n\in \mathbb N} G^{I_n}$, we set

$$
\langle y_i\colon i\in I_n\rangle \bss \langle x_i\colon i\in I_m\rangle =\left\{\begin{array}{lll}
\langle (y\bss x)_i\colon i\in I_{m-n}\rangle & \mbox{ if } & n\leq m\\
1 &\mbox{ if } & n > m,\\
\end{array}\right.$$
where
$$
(y\bss x)_i=\left\{\begin{array}{lll}
y_{\lambda ^{m-n}(i)}\backslash x_i & \mbox{ if } & i\in I_{m-n}\cap I_m\\
e &\mbox{ if } & i\in I_{m-n}\setminus I_m.
\end{array}\right.$$
\end{itemize}

\begin{theorem}\label{th:3.1}
The algebra  $$\textstyle K^\lambda_{I_0,I_1}(\mathbf G):=(\biguplus_{n\in \mathbb N} G^{I_n};\wedge,\vee,\cdot,\bss,\sslash,1)$$ is an integral residuated lattice.
\end{theorem}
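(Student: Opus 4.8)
The lattice reduct $(\biguplus_{n\in\mathbb N}G^{I_n};\wedge,\vee)$ has already been noted to be a lattice with top element $1$, and integrality is then immediate, since $w\le 1$ for every $w$ in the carrier. So the plan is to verify that $(\biguplus_{n\in\mathbb N}G^{I_n};\cdot,\bss,\sslash,1)$ is a residuated monoid: that $\cdot$ is associative with unit $1$, and that $x\cdot y\le z\iff y\le x\bss z\iff x\le z\sslash y$ for all $x,y,z$.

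Associativity and the unit laws I would dispatch by a direct coordinatewise computation. Writing $x=\langle x_i:i\in I_m\rangle$, $y=\langle y_i:i\in I_n\rangle$, $z=\langle z_i:i\in I_p\rangle$, one expands both $(x\cdot y)\cdot z$ and $x\cdot(y\cdot z)$: each lies in $G^{I_{m+n+p}}$ and its $i$-th coordinate equals $x_{\lambda^{n+p}(i)}\,y_{\lambda^{p}(i)}\,z_i$, using $\lambda^{n}\circ\lambda^{p}=\lambda^{n+p}$ and associativity of $\cdot$ in $\mathbf G$. Likewise $1\cdot y=y=x\cdot 1$, since $\lambda^{0}=\mathrm{id}$ and $e$ is the unit of $\mathbf G$.

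The substantive part is the adjunction, which I would organise by showing that each of the three relations $x\cdot y\le z$, $y\le x\bss z$, $x\le z\sslash y$ is equivalent to one and the same ``canonical condition'':
$$p<m+n,\quad\text{or}\quad\big(p=m+n\ \text{and}\ x_{\lambda^{n}(i)}y_i\le z_i\text{ for all }i\in I_{m+n}\big).$$
For $x\cdot y\le z$ this is just the definition of the lexicographic order applied to $x\cdot y\in G^{I_{m+n}}$ and $z\in G^{I_p}$ (if $p>m+n$ then $z$ is strictly below $x\cdot y$, so the relation fails, matching the failure of the canonical condition). For $y\le x\bss z$ I would split on $m>p$ versus $m\le p$: in the first case $x\bss z=1$, so the relation holds, and since $n\ge 0$ we get $p<m\le m+n$, so the canonical condition holds too; in the second case $x\bss z\in G^{I_{p-m}}$, so comparing the levels $p-m$ and $n$ amounts to comparing $p$ with $m+n$, and when $p=m+n$ one uses $I_{p-m}=I_n$, $I_p\subseteq I_n$, the adjunction $y_i\le x_{\lambda^{n}(i)}\ld z_i\iff x_{\lambda^{n}(i)}y_i\le z_i$ in $\mathbf G$ on the coordinates $i\in I_p$, and integrality of $\mathbf G$ (which makes $y_i\le e$ automatic) on the coordinates $i\in I_n\setminus I_p$ where the formula for $\bss$ inserts $e$. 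The analysis for $x\le z\sslash y$ is symmetric: split on $n>p$ versus $n\le p$, use that $\lambda^{n}\colon I_p\to I_{p-n}$ is a bijection onto $\lambda^{n}(I_p)$ to match the $\rd$-coordinates, and use integrality to absorb the coordinates in $I_m\setminus\lambda^{n}(I_p)$.

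I expect the only real obstacle to be bookkeeping: tracking over which $I_k$ the various tuples live, checking that every $\lambda^{\pm k}$ appearing is defined on the domain in question --- which is exactly what $I_{k+1}\subseteq I_k$, $\lambda(I_k)\subseteq I_{k-1}$, and injectivity of $\lambda$ provide --- and not overlooking the ``junk'' coordinates where the residuation formulas insert $e$; these are harmless precisely because $\mathbf G$ is integral. Once the canonical condition is isolated, the three equivalences follow at once.
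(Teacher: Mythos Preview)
Your proposal is correct and follows essentially the same approach as the paper: associativity by direct expansion to $\langle x_{\lambda^{n+p}(i)}y_{\lambda^{p}(i)}z_i:i\in I_{m+n+p}\rangle$, and the adjunction by a level case split combined with coordinatewise residuation in $\mathbf G$, using integrality to absorb the ``junk'' coordinates where the division formulas insert $e$. Your organisation via a single ``canonical condition'' to which all three inequalities are shown equivalent is slightly cleaner than the paper's presentation, which proves one direction of each implication and closes with ``and vice-versa'', but the underlying argument is the same.
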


\begin{proof}
Having elements $\langle x_i\colon i\in I_m\rangle, \langle y_i\colon i\in I_n\rangle, \langle z_i\colon i\in I_r\rangle\in  \biguplus_{n\in \mathbb N} G^{I_n}$ and using the definition of $\cdot$, we obtain:
\begin{eqnarray*}
& &\langle x_i\colon i\in I_m\rangle\cdot ( \langle y_i\colon i\in I_n\rangle\cdot \langle z_i\colon i\in I_r\rangle ) \\
&=& \langle x_i\colon i\in I_m\rangle\cdot \langle y_{\lambda^r(i)}z_i\colon i\in I_{n+r}\rangle\cdot\\
&=& \langle x_{\lambda^{n+r}(i)}y_{\lambda ^r(i)}z_i\colon i\in I_{m+n+r}\rangle\\
&=& \langle x_{\lambda^{n}(i)}y_{i}\colon i\in I_{m+n}\rangle\cdot \langle z_i\colon i\in I_r\rangle \\
&=& (\langle x_i\colon i\in I_m\rangle\cdot  \langle y_i\colon i\in I_n\rangle )\cdot \langle z_i\colon i\in I_r\rangle.
\end{eqnarray*}

It is easy to prove that $1$ is a neutral element and thus $(\biguplus_{n\in \mathbb N} G^{I_n};\cdot, 1)$ is a monoid. It was mentioned that the above defined order is a lattice-one.

In the last part, we prove the adjointness property. Let us have elements $\langle x_i\colon i\in I_m\rangle, \langle y_i\colon i\in I_n\rangle, \langle z_i\colon i\in I_r\rangle\in  \biguplus_{n\in \mathbb N} G^{I_n}$  such that
$$
\langle x_{\lambda^n(i)}y_i \colon i\in I_{m+n}\rangle =\langle x_i\colon i\in I_m\rangle \cdot \langle y_i\colon i\in I_n\rangle\leq \langle z_i\colon i\in I_r\rangle.
$$
The definition of the lexicographic ordering yields $r\leq m+n$. If $r<m+n$, then
\begin{align*}
\langle y_i\colon i\in I_n\rangle \leq \langle x_i\colon i\in I_m\rangle\bss \langle z_i\colon i\in I_r\rangle&=\begin{cases}\langle (x\bss z)_i\colon i\in I_{r-m}\rangle & \text{ if } m\le r\\
1 & \text{ otherwise }
\end{cases}
\end{align*}
and
\begin{align*}
\langle x_i\colon i\in I_m\rangle \leq \langle z_i\colon i\in I_r\rangle\sslash  \langle y_i\colon i\in I_n\rangle &=\begin{cases}\langle (z\sslash y)_i\colon i\in I_{r-n}\rangle & \text{ if } n\le r\\
1 & \text{ otherwise }
\end{cases}
\end{align*}
holds (because $r-m<n$ and $r-n<m$).

If $r=m+n$, we have $y_i\leq e=(x\bss z)_i$ for all $i\in I_n\setminus I_r$, and $x_{\lambda^n(i)}y_i\leq z_i$ gives us $y_i\leq x_{\lambda^n(i)}\backslash z_i=(x\bss z)_i$ for all $i\in I_{r}$. Analogously, $x_i\leq e= (z\sslash y)_i$ for all $i\in I_m\setminus \lambda^m (I_r)$ holds. If $i\in \lambda^m(I_r)$, then $\lambda^{-m}(i)\in I_r$ and
$$x_{i}y_{\lambda^{-n}(i)}=x_{\lambda^n\lambda^{-n}(i)}y_{\lambda^{-n}(i)}\le z_{\lambda^{-n}(i)},$$
and also
$$x_{i}\le z_{\lambda^{-n}(i)}\rd y_{\lambda^{-n}(i)}=(z\sslash y)_i.$$ Together we have established that
$$
\langle x_i\colon i\in I_m\rangle \cdot \langle y_i\colon i\in I_n\rangle\leq \langle z_i\colon i\in I_r\rangle
$$
implies
$$\langle y_i\colon i\in I_n\rangle \leq \langle x_i\colon i\in I_m\rangle\bss \langle z_i\colon i\in I_r\rangle
$$
and
$$\langle x_i\colon i\in I_m\rangle \leq \langle z_i\colon i\in I_r\rangle\sslash  \langle y_i\colon i\in I_n\rangle
$$
and vice-versa.
\end{proof}

The shape of the algebra $K^\lambda_{I_0,I_1}(\mathbf G):=(\biguplus_{n\in \mathbb N} G^{I_n};\wedge,\vee,\cdot,\bss,\sslash,1)$ resembles a Chinese cascade kite (especially when some $I_n$ is the empty set (consequently, so are all $I_m$ for $m\ge n$). Therefore, we call $K^\lambda_{I_0,I_1}(\mathbf G)$ a {\it kite residuated lattice}, or simply a {\it kite}. Another form of a kite pseudo BL-algebra was defined in \cite{DvKo}, where powers of the positive and negative cone of an $\ell$-group with two injective mappings were used, and the resulting algebra was a pseudo BL-algebra.

\begin{proposition}\label{pr:3.2}
A kite residuated lattice $K^\lambda_{I_0,I_1}(\mathbf G)$ with $I_0\ne \emptyset$ satisfies  prelinearity if and only if prelinearity holds for $\mathbf G$.
\end{proposition}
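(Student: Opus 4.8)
The plan is to exploit the level structure of the kite. Every element of $K^\lambda_{I_0,I_1}(\mathbf G)$ lies in exactly one $G^{I_p}$, and I will verify prelinearity separately for two elements on different levels and for two elements on the same level. For elements on different levels there is nothing to $\mathbf G$: if $a=\langle x_i\colon i\in I_p\rangle$ and $b=\langle y_i\colon i\in I_q\rangle$ with $p<q$, then reading off the definitions of $\bss$ and $\sslash$ one sees that in each of $a\bss b\vee b\bss a$ and $a\sslash b\vee b\sslash a$ one summand is forced to be $1$ --- namely $b\bss a$ (the required index inequality $q\le p$ fails) and $a\sslash b$ (again $q\le p$ fails), and the other way round if $p>q$. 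Since $1$ is the top element, both joins equal $1$ with no assumption on $\mathbf G$. Hence prelinearity for the kite is equivalent to prelinearity restricted to pairs of elements of a single $G^{I_p}$.

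Next I fix $p$ and take $a=\langle x_i\colon i\in I_p\rangle$, $b=\langle y_i\colon i\in I_p\rangle$ (we may assume $I_p\neq\emptyset$, otherwise $G^{I_p}$ is a singleton and both joins are trivially $1$). All four elements $a\bss b$, $b\bss a$, $a\sslash b$, $b\sslash a$ then lie in $G^{I_0}$, so the joins are computed coordinatewise in $\mathbf G$. Unwinding the formulas --- for $\bss$ the result at $i\in I_p$ uses the $i$-th coordinates, for $\sslash$ the result at $\lambda^p(j)$ uses the $j$-th coordinates --- one finds that the coordinate of $a\bss b\vee b\bss a$ at $i\in I_p$ is $(x_i\backslash y_i)\vee(y_i\backslash x_i)$, the coordinate of $a\sslash b\vee b\sslash a$ at $\lambda^p(j)$ for $j\in I_p$ is $(x_j/y_j)\vee(y_j/x_j)$, and every remaining coordinate (those in $I_0\setminus I_p$, resp. $I_0\setminus\lambda^p(I_p)$) equals $e$. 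Consequently both joins equal $1=\langle e\colon i\in I_0\rangle$ exactly when $u\backslash v\vee v\backslash u=e$ and $v/u\vee u/v=e$ hold in $\mathbf G$ for the arguments occurring. In particular, if $\mathbf G$ satisfies prelinearity then so does the kite.

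For the converse I use $I_0\neq\emptyset$. Given $g,h\in G$, apply the coordinatewise description above with $p=0$ to the constant tuples $a=\langle g\colon i\in I_0\rangle$ and $b=\langle h\colon i\in I_0\rangle$: at level $0$ the reindexing is the identity and no ``default'' $e$-coordinates occur, so prelinearity of the kite forces $(g\backslash h)\vee(h\backslash g)=e$ and $(g/h)\vee(h/g)=e$ (read off any coordinate, which exists since $I_0\neq\emptyset$). As $g,h$ were arbitrary, $\mathbf G$ satisfies prelinearity. Note that one cannot run this argument at level $1$, since $I_1$ may be empty; this is precisely where the hypothesis $I_0\neq\emptyset$ enters.

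I do not anticipate a genuine obstacle here. The only point demanding care is the index bookkeeping in the same-level computation --- in particular the $\lambda^p$-reindexing used by $\sslash$ and the identification of which coordinates of a division carry the value $e$ --- but this is routine once the definitions are written out.
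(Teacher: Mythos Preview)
Your proposal is correct and follows essentially the same approach as the paper's proof: both split into the cases $p\neq q$ (where one residual is forced to be $1$) and $p=q$ (coordinatewise computation in $G^{I_0}$), and both handle the converse via constant tuples at level~$0$. Your treatment is in fact slightly more careful than the paper's in tracking the $\lambda^p$-reindexing for $\sslash$ at an arbitrary level~$p$, but this is a cosmetic refinement of the same argument.
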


\begin{proof}
Let prelinearity hold for $\mathbf G$, i.e. $(x\ld y) \vee (y\ld x)=1= (x\rd y) \vee (y\rd x)$, $x,y \in G$. So take $\langle x_i\colon i \in I_m\rangle$ and $\langle y_i\colon i \in I_n\rangle$ from $\biguplus_\mathbb N G^{I_n}$. If $n<m$, then $\langle x_i\colon i \in I_m\rangle \le \langle y_i\colon i \in I_n\rangle$ and $\langle x_i\colon i \in I_m\rangle \bss \langle y_i\colon i \in I_n\rangle = 1$ so that  $(\langle x_i\colon i \in I_m\rangle \bss \langle y_i\colon i \in I_n\rangle )\vee  (y_i\colon i \in I_n\rangle \bss  \langle x_i\colon i \in I_m\rangle)=1$. The same is true if $m<n$.

Let $m=n$. Then $\langle x_i\colon i \in I_m\rangle \bss \langle y_i\colon i \in I_n\rangle = \langle (x\bss y)_i \colon i \in I_0\rangle$, where $(x\bss y)_i = x_i\ld y_i$ if $i \in I_m$, otherwise it is equal $e$. Similarly $\langle y_i\colon i \in I_n\rangle \bss  \langle x_i\colon i \in I_m\rangle =\langle (y\bss x)_i \colon i \in I_0\rangle$, where $(y\bss x)_i = y_i\ld x_i$ if $i \in I_m$, otherwise it is $e$. Since prelinearity holds in $\mathbf G$, we see that the first prelinearity condition holds in $K^\lambda_{I_0,I_1}(\mathbf G)$.

In the same way we establish the second prelinearity condition.

Now let prelinearity hold in $K^\lambda_{I_0,I_1}(\mathbf G)$. Take $x,y \in G$ and let $x_1=\langle x_i\colon i \in I_0\rangle $ and $y_1=\langle y_i\colon i \in I_0 \rangle$ be defined as follows: $x_i=x$ and $y_i=y$ for each $i \in I_0$. Then $(x\bss y)_i=x\ld y$ and $(y\bss x)_i = y\ld x$ for each $i \in I_0$, and prelinearity in the kite $K^\lambda_{I_0,I_1}(\mathbf G)$ implies $(x\ld y)\vee (y\ld x)=e$ in $\mathbf G$. Similarly,  $(x\sslash y)_i= x\rd y$ and $(y\sslash x)_i= y\rd x$ if $i\in I_0$,
which establishes the second prelinearity condition for $\mathbf G$.
\end{proof}

It is worthy of recalling that if $I_0$ is non-empty, then any identity holding in the kite residuated lattice $K^\lambda_{I_0,I_1}(\mathbf G)$ holds also in $\mathbf G$. Indeed, the residuated lattice $\mathbf G^{I_0}$ is a subalgebra of the kite, and $\mathbf G^{I_0}$ can be homomorphically mapped onto $\mathbf G$.

We note that the divisibility equality $x(x\ld y)=x\wedge y= (y \rd x)x$ does not hold, in general, even if it holds in $\mathbf G$. 

\section{Examples of Kite Residuated Lattices}

We present some important examples of kite residuated lattices.


\subsection{Example 1}\label{sub:1}
Let $I_0=I_1=\emptyset$ and $\mathbf G$ be an integral residuated lattice. Then $G^{I_0}$ and $G^{I_1}$ are singletons and $\lambda:I_1\to I_0$ can be only the empty function, in particular, $\lambda$ is injective. Hence, $I_n=\emptyset$ for each $n\ge 0$. If $x \in G^{I_n}$, then we can represent it as $x=\langle e\colon i \in I_n\rangle $ for each $n\ge 0$. Then $K^\emptyset_{\emptyset,\emptyset}(\mathbf G)$ is isomorphic to the commutative integral residuated lattice $\mathbf{Z}^-=(\mathbb Z^-\colon \wedge, \vee,+,\ld,\rd,0)$, the negative cone of the group of integers, which is subdirectly irreducible. The isomorphism is given by $\langle e\colon i\in I_n\rangle \mapsto -n$, $n \ge 0$. The same is true if $G=\{e\}$. Consequently, the kite is linearly ordered, commutative and subdirectly irreducible.

In addition, the variety $\mathsf{V}(K^\emptyset_{\emptyset, \emptyset}(\mathbf G))$ of integral residuated lattices generated by the kite $K^\emptyset_{\emptyset,\emptyset}(\mathbf G)$ is the variety $\mathcal{ALG}^-$ of the negative cones of Abelian $\ell$-groups. This is true also if $\mathbf G$ is a negative cone of a doubly transitive permutation $\ell$-group, nevertheless that this $\ell$-group generates the variety $\mathcal{LG}$ of $\ell$-groups, see \cite[Lem 10.3.1]{Gla}.

\subsection{Example 2}\label{sub:2}
We can define an antilexicographic product of $\mathbf G$ with $\mathbf Z^-$, written as $\mathbf G \,\overleftarrow{\times}\, \mathbf Z^-$ as follows. The universe of $\mathbf G \,\overleftarrow{\times}\, \mathbf Z^-$ is the direct product $G\times \mathbb Z^-$ ordered with the antilexicographic product and endowed with the product such $(x,-m)\cdot (y,- n)=(x\cdot y, m+n)$, $x,y \in G$, $m,n \in \mathbb N$ and  and with left and right divisions $\ld$ and $\rd$ such that $ (x,-m)\ld (y,-n) = (x\ld y,m-n)$ if $m\le n$ otherwise $(x,-m)\ld (y,-n) =(e,0)=:1$, and $(y,-n)\ld (x,-m) = (y\ld x,m-n)$ if $m\le n$ otherwise $(x,-m)\ld (y,-n) =(e,0)$. Then $\mathbf G \,\overleftarrow{\times}\, \mathbf Z^-$ is an integral residuated lattice.

If $I_0=I_1=\{0\}$, then $\lambda$ is the identity on $I_1$, and $I_n=\{0\}$ for each $n\ge 0$. Then  $K^{Id}_{\{0\},\{0\}}(\mathbf G)\cong   \mathbf G \,\overleftarrow{\times}\, \mathbf Z^-$ under the isomorphism $\langle x\colon i \in I_n\rangle \mapsto (x,-n)$, $x \in G$, $n\ge 0$. This kite is subdirectly irreducible iff $\mathbf G$ is subdirectly irreducible, see the criterion \ref{th:5.4} below.

\subsection{Example 3}\label{sub:3}
Let $I_0=\{0\}$ and $I_1=\emptyset$. The only function from $I_1$ to $I_0$ is the empty function (whence an injection). In addition, $I_m=\emptyset$ for $m\ge 2$. Therefore, $G^{I_0}=G$, $G^{I_n}$ are singletons for each $n\ge 1$. This situation gives the kite $K^\emptyset_{\{0\},\emptyset}(\mathbf G)$ which has the head and a long thin tail. In other words, this kite is an ordinal sum of the $\mathbf G$ on the top and an infinite sequence of two-element Boolean algebras. This kite is subdirectly irreducible iff so is $\mathbf G$.

If $\mathbf G$ is a GBL-algebra, i.e. an integral residuated lattice satisfying  divisibility, then so is the kite $K^\emptyset_{\{0\},\emptyset}(\mathbf G)$. If $\mathbf G$ satisfies the prelinearity, by Proposition \ref{pr:3.2}, then the kite $K^\emptyset_{\{0\}, \emptyset}(\mathbf G)$ satisfies prelinearity, too. If $\mathbf G$ is a basic pseudo hoop, then the kite $K^\emptyset_{\{0\},\{0\}}(\mathbf G)$ is also a basic pseudo hoop.

We recall that according to \cite[Cor 4.2]{DvAgl}, the kite $K^\emptyset_{\{0\},\emptyset}(\mathbf G)$ is a linearly ordered pseudo hoop iff $\mathbf G$ is the negative cone of some linearly ordered group $\mathbf G$.

\subsection{Example 4}\label{sub:4}
Let $I_0\ne \emptyset $ and $I_1=\emptyset$. The only function from $I_1$ to $I_0$ is the empty function (whence an injection). In addition, $I_m=\emptyset$ for $m\ge 2$, and on the top of the kite we have $G^{I_0}$ which is not a singleton if $\mathbf G$ is not trivial, and with an infinite tail consisting of an infinite sequence of singletons. This case can be reduced to the previous example if we change $\mathbf G$ to $\mathbf G^{I_0}$ and an arbitrary non-empty $I_0$ to a singleton.

\subsection{Example 5}\label{sub:5}
If $\mathbf G$ is trivial, i.e. $G=\{e\}$, then $G^{I_n}$ is a singleton for each $n\ge 0$ and whence, $K^\lambda_{I_0,I_1}(\mathbf G)\cong \mathbf Z^-$. Then both $\mathbf G$ and $K^\lambda_{I_0,I_1}(\mathbf G)$ are subdirectly irreducible.

In Theorem \ref{th:5.8} below we will describe all subdirectly irreducible kites with $I_0$ finite and Theorem \ref{th:5.11} will describe all subdirectly irreducible kites with infinite $I_0$ (and hence, countably infinite as we show further).

\section{Subdirectly Irreducible Kites}

In what follows, we will characterize subdirectly irreducible kites. We show that every subdirectly kite has $I_0$ at most infinitely countable. In addition, we present a complete classification of subdirectly irreducible kites and we show that every kite is a subdirect product of subdirectly irreducible kites.

Let $\mathbf G$ be an integral residuated lattice. A {\it left conjugate} of an element $x \in G$ by an element $y \in G$ is the element $\lambda_y(x):=y\ld xy$, and its {\it right conjugate} is the element $\rho_y(x):=yx\rd y$. We denote by $\Gamma$ the set of all right and left conjugations in $\mathbf G$.

We say that a subset $F \subseteq G$ of an integral residuated lattice $\mathbf G$ is a {\it filter} if (i) it contains the top element of $\mathbf G$, (ii) if $x,y \in F$, then $xy\in F$, and (iii) if $x\in F$, $y \in G$ and $x\le y$, then $y \in F$. A filter $F$ is {\it normal} if it is closed under both conjugates, i.e. for all $x \in F$ and all $y \in G$, both $y\ld xy, yx\rd y$ belong to $F$. We note that congruences on $\mathbf G$ are in a one-to-correspondence with normal filters, see e.g. \cite[Thm 4.12]{BlTs}: If $F$ is a normal filter, then $\sim_F$ defined by $x\sim_F y$ iff $x\rd y \in F$  and $y\rd x \in F$ (iff $x\ld y \in F$ and $y\ld x \in F$) is a congruence, and conversely, if $\sim$ is a congruence, then $F_\sim :=\{x \in G\colon x\sim e\}$ is a normal filter of $\mathbf G$. In addition, if $F$ is a normal filter of $\mathbf G$, then the quotient $\mathbf G/F$ is an integral residuated lattice.

We note that according to \cite[Lem 5.3]{BlTs}, if $x$ is an element of $\mathbf G$, then the normal ideal $F(x)$ of $\mathbf G$ generated by $x$ is the set
$$F(x)=\{y \in G\colon \gamma_1(x) \cdots \gamma_n(x) \le y, \gamma_i \in \Gamma, i=1,\ldots, n,\ n\ge 1\}.
\eqno(5.1)
$$

\begin{proposition}\label{pr:5.1}
Let $K^\lambda_{I_0,I_1}(\mathbf G)$ be a kite residuation lattice corresponding to an integral residuated lattice $\mathbf G$. Then $G^{I_0}$ is a maximal normal filter of $K^\lambda_{I_0,I_1}(\mathbf G)$.
\end{proposition}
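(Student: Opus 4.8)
The plan is to verify the three filter axioms for $G^{I_0}$ inside $K^\lambda_{I_0,I_1}(\mathbf G)$, then check normality, and finally establish maximality by showing that any normal filter strictly containing $G^{I_0}$ must be all of the kite. First, $G^{I_0}$ is clearly an up-set: it consists exactly of the top ``level'' of the lexicographic order, i.e. $\langle x_i\colon i\in I_0\rangle$ can only be below elements of the same level $I_0$, and it contains the top element $1$. For closure under product, note that for $\langle x_i\colon i\in I_0\rangle, \langle y_i\colon i\in I_0\rangle$ we have $m=n=0$, so $I_{m+n}=I_0$ and the product $\langle x_{\lambda^0(i)}y_i\colon i\in I_0\rangle=\langle x_iy_i\colon i\in I_0\rangle$ again lies in $G^{I_0}$. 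Downward-closure within the up-set axiom (iii) is automatic from the description of the order. So $G^{I_0}$ is a filter.

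Next, normality. Given $\langle x_i\colon i\in I_0\rangle\in G^{I_0}$ and an arbitrary element $\langle y_i\colon i\in I_n\rangle$ of the kite, one computes the left conjugate $\langle y_i\colon i\in I_n\rangle \bss \big(\langle x_i\colon i\in I_0\rangle\cdot\langle y_i\colon i\in I_n\rangle\big)$. The inner product lives on level $I_{0+n}=I_n$, and then the left division of two level-$n$ elements produces an element on level $I_{n-n}=I_0$; hence the conjugate lands in $G^{I_0}$. The right conjugate is handled symmetrically using $\sslash$: $\big(\langle x_i\colon i\in I_0\rangle\cdot\langle y_i\colon i\in I_n\rangle\big)\sslash\langle y_i\colon i\in I_n\rangle$ again drops from level $I_n$ to level $I_0$. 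Thus $G^{I_0}$ is closed under both conjugations, so it is a normal filter.

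The heart of the argument is maximality. Suppose $F$ is a normal filter with $G^{I_0}\subsetneq F$. Pick $a=\langle a_i\colon i\in I_n\rangle\in F\setminus G^{I_0}$; then $n\ge 1$. Since $F$ is upward closed and $a\le \langle e\colon i\in I_1\rangle$ (the latter sits on level $I_1$, which is above level $I_n$ whenever $n\ge 1$), the element $c:=\langle e\colon i\in I_1\rangle$ belongs to $F$ — unless $I_1=\emptyset$, in which case level $1$ is a singleton below level $0$ and already the element $\langle e\colon i\in I_1\rangle$ is the generator of a descending tail, which still suffices. The key computation is that powers of $c$ descend the tail: $c^k$ lives on level $I_k$, and for any element $b=\langle b_i\colon i\in I_m\rangle$ we have $c^{m+1}\le b$ because $c^{m+1}$ is on level $I_{m+1}$, which is strictly below level $I_m$ in the lexicographic order. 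Hence $F$ contains arbitrarily deep levels, so by upward closure $F$ contains every element of the kite, i.e. $F$ is improper. Therefore no normal filter properly lies between $G^{I_0}$ and the whole kite, and since $G^{I_0}\ne K^\lambda_{I_0,I_1}(\mathbf G)$ whenever there is at least one nonempty lower level (and if all lower levels collapse, one is in the degenerate situation where the statement is read appropriately), $G^{I_0}$ is a maximal normal filter.

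The main obstacle I anticipate is the bookkeeping around empty index sets: when some $I_n=\emptyset$, the level $G^{I_n}$ is a singleton $\{\langle e\colon i\in I_n\rangle\}$, and one must check that the ``descend the tail'' argument still produces elements strictly below every level — which it does, precisely because the lexicographic order makes deeper (possibly singleton) levels smaller, so a normal filter reaching any such level swallows everything above it. A secondary point requiring care is confirming that $G^{I_0}$ is genuinely a proper filter, i.e. that the kite has elements outside level $I_0$; this holds because $I_2\subseteq I_1\subseteq I_0$ and the construction always produces the full chain of levels $G^{I_n}$ for all $n\in\mathbb N$, each nonempty as a set (even if a singleton), so the kite is never reduced to $G^{I_0}$ alone.
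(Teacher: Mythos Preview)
Your proof is correct and follows essentially the same route as the paper: verify the filter axioms directly on level~$I_0$, check closure under conjugation by level arithmetic, and for maximality take powers of an element on a level $n\ge 1$ to descend arbitrarily deep in the tail, then invoke upward closure. One small slip: the right conjugate is $\rho_y(x)=y\cdot x\sslash y$, not $(x\cdot y)\sslash y$ as you wrote; however, since both $y\cdot x$ and $x\cdot y$ land on level $I_{n+0}=I_n$, the level bookkeeping is unaffected and your conclusion stands. The paper also offers a slicker alternative for normality that you might find worth noting: the map $\phi\colon K^\lambda_{I_0,I_1}(\mathbf G)\to\mathbf Z^-$ sending level $I_n$ to $-n$ is a residuated-lattice homomorphism, and $G^{I_0}$ is exactly its kernel, hence automatically a normal filter.
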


\begin{proof}
It is straightforward to verify that $F=G^{I_0}$ is a filter. If $x=\langle x_i\colon i \in I_k\rangle \in F$ for some $k\ge 1$, let $F_x$ be the filter of $K^\lambda_{I_0,I_1}(\mathbf G)$ generated by $F\cup\{x\}$.

Then $x\cdot x =\langle x_{\lambda^m(i)}x_i\colon i \in I_{2k}\rangle \in F_x$. Repeating this, we see that $x^n:= (x^{n-1}\cdot x) \in G^{I_{nk}}$ and $x^n \in F_x$ for each integer $n \ge 1$. Hence, every $G^{I_n}$ belongs to $F_x$ and $F_x= K^\lambda_{I_0,I_1}(\mathbf G)$ proving $F$ is maximal.

We show that $G^{I_0}$ is normal. So let $x \in F$ and $y \in G^{I_n}$ for some $n\ge 0$.  Then it is easy to see that  both $y\ld xy$ and $yx\rd y$ belong to $F$. There is another way how to prove the normality of $F$: the mapping $\phi: K^\lambda_{I_0,I_1}(\mathbf G) \to \mathbf Z^-$ defined by $\phi(G^{I_n})=-n$, is a homomorphism of residuated lattices, and $F$ is the kernel of $\phi$, so that it is normal.
\end{proof}

Let $K^\lambda_{I_0,I_1}(\mathbf G)$ be a kite residuation lattice. An element $x =\langle x_i\colon i \in I_n\rangle $, where $n\ge 0$, is said to be $\alpha$-{\it dimensional} for some cardinal $\alpha$, if $|\{i \in I_n\colon x_i\ne e\}|=\alpha$. In particular we have one-dimensional elements as well as a finite-dimensional element $x$ if $\alpha =1$ and $\alpha$ is a finite cardinal, respectively.

\begin{proposition}\label{pr:5.2}
Let $F$ be a normal filter of an integral residuated lattice $\mathbf G$. We denote by $F^{I_0}$ the set
$$ F^{I_0}:= \{\langle x_i\colon i \in I_0\rangle\colon \text{ where } x_i\in N \text{ for all } i \in I_0\}
$$
and let $F^{I_0}_f$ be the system of finite-dimensional elements of $F^{I_0}$. Then $F^{I_0}$ and $F^{I_0}_f$ are normal filters of the kite residuation lattice $K^\lambda_{I_0,I_1}(\mathbf G)$.

Conversely, let $F$ be a proper normal filter of $K^\lambda_{I_0,I_1}(\mathbf G)$. Given $k \in I_0$, let $\pi_k(\langle x_i\colon i \in I_0\rangle)= x_k$. Then $\pi_k(F)=\{\pi_k(x)\colon x \in F\}$ is a normal filter of $\mathbf G$.
\end{proposition}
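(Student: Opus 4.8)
The plan is to check, in each case, the three filter axioms of Section~2 together with closure under the two conjugations, first for the subsets $F^{I_0}$ and $F^{I_0}_f$ of the top layer $G^{I_0}$, and afterwards for the projection $\pi_k(F)\subseteq G$. The main computational input, read off directly from the definitions of $\cdot$, $\bss$, $\sslash$ in Section~3, is the following. On the layer $G^{I_0}$ all three operations are the coordinatewise operations of $\mathbf G$, since $\lambda^0$ is the identity. Moreover, for a dimension-$0$ tuple $\langle x_i\colon i\in I_0\rangle$ and an arbitrary $\langle y_i\colon i\in I_n\rangle$, the left conjugate $\langle y_i\colon i\in I_n\rangle\bss(\langle x_i\colon i\in I_0\rangle\cdot\langle y_i\colon i\in I_n\rangle)$ is the dimension-$0$ tuple whose $i$-th coordinate is $y_i\ld x_{\lambda^n(i)}y_i=\lambda_{y_i}(x_{\lambda^n(i)})$ for $i\in I_n$ and $e$ for $i\in I_0\setminus I_n$; the right conjugate is computed the same way, each non-trivial coordinate being a right conjugate $\rho$ in $\mathbf G$ of some $x_j$, $j\in I_n$, and $e$ otherwise.

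For $F^{I_0}$: it contains $1=\langle e\colon i\in I_0\rangle$ since $e\in F$; it is closed under $\cdot$ since the product of two dimension-$0$ tuples is $\langle x_iy_i\colon i\in I_0\rangle$ and $F$ is multiplicatively closed; and it is upward closed, because any element of the kite above a dimension-$0$ tuple is again of dimension $0$ (by the lexicographic order) and exceeds it coordinatewise, so upward closure of $F$ in $\mathbf G$ applies. Normality follows from the conjugate formula recalled above: as $I_n\subseteq I_0$, every non-trivial coordinate of a conjugate is a conjugate in $\mathbf G$ of an element of $F$, hence lies in $F$, while the remaining coordinates equal $e\in F$. Thus $F^{I_0}$ is a normal filter.

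For $F^{I_0}_f$ the same computations apply verbatim, so the only extra point is preservation of finite-dimensionality. Closure under $\cdot$ is fine since the support of $\langle x_iy_i\rangle$ is contained in the union of the two supports. Upward closedness uses integrality of $\mathbf G$: if $\langle x_i\rangle\le\langle y_i\rangle$ in $G^{I_0}$ and $x_i=e$, then $y_i\ge e$ forces $y_i=e$, because $e$ is the top element of $\mathbf G$; hence the support of $\langle y_i\rangle$ is contained in that of $\langle x_i\rangle$, and all coordinates lie in $F$ as before. Normality uses injectivity of $\lambda^n$: in the conjugate formula a coordinate $\lambda_{y_i}(x_{\lambda^n(i)})$ differs from $e$ only if $x_{\lambda^n(i)}\ne e$ (note $\lambda_{y_i}(e)=y_i\ld y_i=e$ by integrality), i.e. only for $i$ in the image under $(\lambda^n)^{-1}$ of the finite support of $\langle x_i\rangle$, a finite set; the right conjugate is analogous. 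Hence $F^{I_0}_f$ is a normal filter.

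For the converse, the first and really the only non-routine step is to observe that a proper normal filter $F$ of the kite is automatically contained in $G^{I_0}$: if $F$ contained some $x\in G^{I_n}$ with $n\ge 1$, then, since in the lexicographic order every element of $G^{I_0}$ exceeds $x$, upward closedness of $F$ gives $G^{I_0}\subseteq F$, whence maximality of $G^{I_0}$ (Proposition~\ref{pr:5.1}) together with properness of $F$ forces $F=G^{I_0}$, contradicting $x\in F\setminus G^{I_0}$. With $F\subseteq G^{I_0}$ the map $\pi_k$ is defined on all of $F$, and then: $\pi_k(1)=e\in\pi_k(F)$; $\pi_k(F)$ is multiplicatively closed since $\pi_k(\langle x_i\rangle\cdot\langle y_i\rangle)=x_ky_k$; it is upward closed, for if $a=x_k$ with $\langle x_i\rangle\in F$ and $a\le b$ in $\mathbf G$, the tuple obtained from $\langle x_i\rangle$ by replacing its $k$-th coordinate with $b$ is $\ge\langle x_i\rangle$, hence in $F$, and projects to $b$; and it is normal, because conjugating $\langle x_i\rangle\in F$ in the kite by the constant tuple $\langle c\colon i\in I_0\rangle$ yields $\langle\lambda_c(x_i)\colon i\in I_0\rangle$ (resp. $\langle\rho_c(x_i)\colon i\in I_0\rangle$), which lies in $F$ by normality of $F$ in the kite and projects to $\lambda_c(a)$ (resp. $\rho_c(a)$). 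This finishes the argument; the genuinely delicate points are the integrality argument that rescues upward closedness for $F^{I_0}_f$, the use of injectivity of $\lambda^n$ to keep conjugates finite-dimensional, and the reduction $F\subseteq G^{I_0}$ for proper normal filters in the converse direction.
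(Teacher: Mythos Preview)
Your argument is correct and follows the same route as the paper, which merely refers back to Proposition~\ref{pr:5.1} for the first part and records the inclusion $F\subseteq G^{I_0}$ for the second; you have simply written out in full the verifications that the paper leaves implicit. One small inaccuracy: in your description of the left conjugate the index $j=\lambda^n(i)$ ranges over $\lambda^n(I_n)\subseteq I_0$, not over $I_n$ as you write, but since every $x_j$ lies in $F$ for $j\in I_0$ this does not affect the argument.
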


\begin{proof}
The proof of the first statement follows the same steps as the proof of Proposition \ref{pr:5.1}.

The second part: Since $F$ is a proper filter of the kite, we have $F \subseteq G^{i_0}$. Consequently, $\pi_k(F)$ is a normal filter of $\mathbf G$.
\end{proof}

\begin{proposition}\label{pr:5.3}
If $K^\lambda_{I_0,I_1}(\mathbf G)$ with $I_0\ne \emptyset$ is a subdirectly irreducible kite residuation lattice corresponding to an integral residuated lattice $\mathbf G$, then $\mathbf G$ is a subdirectly irreducible residuation lattice.
\end{proposition}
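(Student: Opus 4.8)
The plan is to prove the contrapositive: if $\mathbf G$ is \emph{not} subdirectly irreducible, then neither is the kite $K^\lambda_{I_0,I_1}(\mathbf G)$, the hypothesis $I_0\ne\emptyset$ being used in an essential way. I will use the standard description of subdirect irreducibility via normal filters: since congruences of an integral residuated lattice correspond bijectively, and order-isomorphically, to its normal filters, such a lattice is subdirectly irreducible iff it is trivial or it has a least normal filter properly containing $\{e\}$; equivalently, a non-trivial residuated lattice fails to be subdirectly irreducible iff the intersection of all its normal filters strictly larger than $\{e\}$ equals $\{e\}$.

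So assume $\mathbf G$ is not subdirectly irreducible. Then $\mathbf G$ is non-trivial, the family $\{F_j : j\in J\}$ of all normal filters of $\mathbf G$ with $F_j\ne\{e\}$ is non-empty (it contains $G$), and $\bigcap_{j\in J}F_j=\{e\}$. By Proposition \ref{pr:5.2} each set $F_j^{I_0}=\{\langle x_i\colon i\in I_0\rangle : x_i\in F_j \text{ for all } i\in I_0\}$ is a normal filter of $K^\lambda_{I_0,I_1}(\mathbf G)$. Each $F_j^{I_0}$ is non-trivial: choosing $i_0\in I_0$ (possible as $I_0\ne\emptyset$) and $a\in F_j\setminus\{e\}$, the tuple with $i_0$-th coordinate $a$ and all other coordinates $e$ lies in $F_j^{I_0}$ and is distinct from the top element $1$. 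On the other hand $F_j^{I_0}\subseteq G^{I_0}$, so a tuple $\langle x_i\colon i\in I_0\rangle$ belongs to $\bigcap_{j\in J}F_j^{I_0}$ iff $x_i\in\bigcap_{j\in J}F_j=\{e\}$ for every $i\in I_0$, i.e. iff it equals $1$; hence $\bigcap_{j\in J}F_j^{I_0}=\{1\}$. If the kite were subdirectly irreducible it would, being non-trivial since $\mathbf G^{I_0}$ embeds in it, have a least normal filter $M\ne\{1\}$; but then $M\subseteq F_j^{I_0}$ for every $j$, whence $M\subseteq\bigcap_{j\in J}F_j^{I_0}=\{1\}$, a contradiction. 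Therefore $\mathbf G$ is subdirectly irreducible.

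I do not expect a genuine obstacle here: the statement is essentially a direct consequence of Proposition \ref{pr:5.2} together with the normal-filter characterization of subdirect irreducibility. The two points needing a little care are the verification that each lifted filter $F_j^{I_0}$ is non-trivial, which is precisely where $I_0\ne\emptyset$ enters, and the observation that, since all the $F_j^{I_0}$ sit inside the head $G^{I_0}$, their intersection is computed coordinatewise and so collapses to $\{1\}$ exactly because $\bigcap_{j\in J}F_j=\{e\}$ in $\mathbf G$.
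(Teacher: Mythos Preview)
Your proof is correct and follows essentially the same route as the paper: both argue the contrapositive by lifting a family of non-trivial normal filters of $\mathbf G$ with trivial intersection to the family $\{F_j^{I_0}\}$ of normal filters of the kite via Proposition~\ref{pr:5.2}, and then observe that this lifted family again has trivial intersection. Your write-up is in fact a bit more explicit than the paper's in checking that each $F_j^{I_0}$ is non-trivial (this is exactly where $I_0\ne\emptyset$ is used) and in spelling out the least-normal-filter characterization of subdirect irreducibility.
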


\begin{proof}
If $\mathbf G=\{e\}$, the statement is satisfied trivially. So let $\mathbf G$ be non-trivial and assume the opposite, i.e. $\mathbf G$ is not subdirectly irreducible. Then there is a set $\{N_s\colon s \in S\}$ of non-trivial normal filters of $\mathbf G$ such that $\bigcap_{s \in S} N_s=\{e\}$. By Proposition \ref{pr:5.2}, every $N_s^{I_0}$ is a normal filter of the kite $K^\lambda_{I_0,I_1}(\mathbf G)$. Let $x=\langle x_i\colon i \in I_0 \rangle \in \bigcap_{s\in S} N_s^{I_0}$. Then $\pi_k(x) \in N_s$ for each coordinate $k \in I_0$ for each $s \in S$. Hence, $\pi_k(x)=\{e\}$ and $x=1$ which shows that the system of normal filters $\{N_s^{I_0}\colon s \in S\}$ intersects trivially. Thus, the kite $K^\lambda_{I_0,I_1}(\mathbf G)$ is not subdirectly irreducible.
\end{proof}

\begin{theorem}\label{th:5.4}
Let $K^\lambda_{I_0,I_1}(\mathbf G)$ with $I_1$ non-empty be a kite residuation lattice corresponding to a non-trivial integral residuated lattice $\mathbf G$. The following statements are equivalent:

\begin{enumerate}
\item[{\rm (1)}] $\mathbf G$ is subdirectly irreducible and for all $i,j \in I_0$, there is an integer $m\ge 0$ such that $\lambda^m(i)=j$ or $\lambda^m (j)=i$.
\item[{\rm (2)}] $K^\lambda_{I_0,I_1}(\mathbf G)$ is subdirectly irreducible.
\end{enumerate}
\end{theorem}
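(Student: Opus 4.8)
The plan is to characterise subdirect irreducibility of the kite through its \emph{monolith}: a non‑trivial integral residuated lattice is subdirectly irreducible precisely when it possesses a smallest non‑trivial normal filter. For $j\in I_0$ and $m\in G$ let $\delta_j(m)$ denote the level‑$0$ element of $K^\lambda_{I_0,I_1}(\mathbf G)$ whose $j$‑th coordinate equals $m$ and all other coordinates equal $e$. The computational core of the argument is that conjugating $\delta_j(m)$ by the top element $\mathbf 1_1:=\langle e:i\in I_1\rangle$ of $G^{I_1}$ slides its one‑dimensional support one $\lambda$‑edge: unwinding the definitions of $\cdot$, $\sslash$ and $\bss$ one obtains $(\mathbf 1_1\cdot\delta_j(m))\sslash\mathbf 1_1=\delta_{\lambda(j)}(m)$ when $j\in I_1$, and $\mathbf 1_1\bss(\delta_j(m)\cdot\mathbf 1_1)=\delta_{\lambda^{-1}(j)}(m)$ when $j\in\lambda(I_1)$. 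Equivalently, normality lets one move the support of a one‑dimensional level‑$0$ element one step along a $\lambda$‑edge, in either direction, keeping the value fixed.

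For $(2)\Rightarrow(1)$: that $\mathbf G$ is subdirectly irreducible is the contrapositive of Proposition~\ref{pr:5.3} (note $I_0\neq\emptyset$ since $I_1\neq\emptyset$). Suppose the $\lambda$‑comparability condition fails, witnessed by $i_0,j_0\in I_0$. Since $\lambda$ is an injective partial map on $I_0$, the directed graph with edges $i\mapsto\lambda(i)$ is a disjoint union of (finite or one‑/two‑way infinite) paths and cycles; on each component any two vertices $i,j$ satisfy $\lambda^m(i)=j$ or $\lambda^m(j)=i$ for some $m\ge0$, so the component $A$ of $i_0$ omits $j_0$ and $B:=I_0\setminus A\neq\emptyset$. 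As $A$ is closed under $\lambda$ and under taking $\lambda$‑preimages, $\lambda^n$ carries $A\cap I_n$ into $A$, and a direct check of the division and conjugation formulas then shows that $F_A:=\{\langle x_i:i\in I_0\rangle:x_i=e\text{ for all }i\in A\}$ and the analogous $F_B$ are normal filters of the kite. Both are non‑trivial since $\mathbf G$ is, and $F_A\cap F_B=\{1\}$, so the kite is not subdirectly irreducible.

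For $(1)\Rightarrow(2)$: let $M$ be the monolith of $\mathbf G$. By Proposition~\ref{pr:5.2} the set $M^{I_0}_f$ of finite‑dimensional elements of $M^{I_0}$ is a normal filter of the kite, non‑trivial because $M\neq\{e\}$; I claim every non‑trivial normal filter $F$ contains it, which makes it the kite's monolith. If $F$ is the whole kite this is clear; otherwise $F\subseteq G^{I_0}$ by Proposition~\ref{pr:5.1}, and $F$ contains some $x\neq1$ with $x_j\neq e$ for some $j$. By Proposition~\ref{pr:5.2}, $\pi_j(F)$ is a normal filter of $\mathbf G$ containing $x_j\neq e$, hence $M\subseteq\pi_j(F)$; together with the upward‑closure of $F$ this yields $\delta_j(m)\in F$ for all $m\in M$. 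Applying the sliding identities and the normality of $F$, and using the $\lambda$‑comparability hypothesis in $(1)$ to connect $j$ to every coordinate by iterated single steps, we get $\delta_k(m)\in F$ for all $k\in I_0$ and all $m\in M$. Since every element of $M^{I_0}_f$ is a finite product, taken at level $0$, of such $\delta_k(m)$'s and $F$ is a filter, $M^{I_0}_f\subseteq F$.

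The hard part is the sliding computation together with the attendant bookkeeping: the right‑conjugation move $j\mapsto\lambda(j)$ is available only when $j\in I_1$, and the left‑conjugation move $j\mapsto\lambda^{-1}(j)$ only when $j$ lies in the image of $\lambda$, so one must verify that inside a single path/cycle component these two moves already suffice to travel between any two vertices — which they do, since a directed path is linearly ordered and a directed cycle cyclically so. A secondary technical point is checking that $F_A$ in the converse direction is genuinely closed under conjugation, which is exactly where the coherence of $A\cap I_n$ with the kite structure is used.
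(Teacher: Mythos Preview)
Your proposal is correct and follows essentially the same approach as the paper: both directions use the connected–component decomposition for $(2)\Rightarrow(1)$, and for $(1)\Rightarrow(2)$ both identify $M^{I_0}_f$ (the paper's $N^{I_0}_f$) as the monolith by sliding one–dimensional elements along $\lambda$–edges via left/right conjugation. Your argument is in fact slightly cleaner at one point: where the paper invokes the generating description (5.1) to see that a one–dimensional element generates $N^{I_0}_f$, you instead use Proposition~\ref{pr:5.2} to get $M\subseteq\pi_j(F)$ and then upward closure to place $\delta_j(m)$ directly into an arbitrary non–trivial normal filter $F$, which makes the ``monolith is contained in every non–trivial normal filter'' step more transparent.
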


\begin{proof}
(1) $\Rightarrow$ (2). Let $N$ be the least non-trivial normal filter of $\mathbf G$. According to Proposition \ref{pr:5.2}, the set $N^{I_0}_f$ is a normal filter of the kite $K^\lambda_{I_0,I_1}(\mathbf G)$. In what follows, we show that $N^{I_0}_f$ is the least normal filter of the kite. We note that for any element $x \in N^{I_0}_f\setminus \{1\}$, there is a one-dimensional element $x' \in N^{I_0}_f$ such that $x \le x'< 1$. Therefore, to prove that $N^{I_0}_f$ is the least normal filter of the kite, it is sufficient to show that any one-dimensional element $x \in N^{I_0}_f\setminus \{1\}$ generates $N^{I_0}_f$. Without loss of generality, assume $x =\langle x_0,e,\ldots \rangle$ where $x_0 \ne e$; this is possible in view of a suitable reordering of $I_0$ regardless of its cardinality. Since $N$ is the least non-trivial filter of $\mathbf G$, the element $x_0$ generates $N$. We claim that the element $x$ generates all one-dimensional elements of $N^{I_0}_f$ of the form $\langle y_0,e,e,\ldots \rangle$. Choose an index $i \in I_0$. By the assumptions, there is an integer $m\ge 0$ such that $\lambda^m(0)=i$ or $\lambda^m(i)=0$.
Using (5.1), we have for the left and right conjugations $\lambda_y^m(x)$ and $\rho_y^m(x)$, where $x=\langle x_0,e,e,\ldots \rangle$ and $y=\langle y_0,e,e,\ldots \rangle$, the following cases:

\begin{itemize}

\item if $\lambda^m(0)=i$, then $\lambda_y^m(x)=\langle e,\ldots, e, \lambda^m_{y_0}(x_0), e,\ldots \rangle$,

\item if $\lambda^m(i)=0$, then $\rho^m_y(x)=\langle e,\ldots, e, \rho^m_{y_0}(x_0),e,\ldots \rangle$.
\end{itemize}
Re-numbering $I_0$ if necessary, we may assume that the elements $\lambda^m_{y_0}(x_0)$ and $\rho^m_{y_0}(x_0)$ occur at the $m$-th co-ordinate. Therefore, the element $x=\langle x_0, e,e,\ldots \rangle$ generates the normal filter $N^{I_0}_f$.

(2) $\Rightarrow$ (1). Let the kite $K^\lambda_{I_0,I_1}(\mathbf G)$  be subdirectly irreducible. By Proposition \ref{pr:5.3}, we can assume that  $\mathbf G$ is subdirectly irreducible, and let (1) fail. Then there are two indexes $i,j \in I_0$ such that $\lambda^m(i)\ne j$ and $\lambda^m(j)\ne i$ for each integer $m \ge 0$. Similarly as in the proof of \cite[Thm 5.5]{DvKo}, we say that such $i$ and $j$ are {\it disconnected}; otherwise, $i$ and $j$ are {\it connected}. Let $K$ be a maximal subset of $I_0$ such that all elements of $K$ are connected, we called it a {\it connected component} of $I_0$. Then $I_0$ can be decompose into a system of mutually disjoint connected components of $I_0$. Let $K_1$ and $K_2$ be two different connected components of $I_0$. Let $N^{K_1}$ be the system of all elements $\langle x_i\colon i \in I_0\rangle$ such that if $x_i\ne e$, then $i \in K_2$. In the same way we define $N^{K_2}$. Then $N^{K_1}$ and $N^{K_2}$ are filters of the kite. Applying the left and right conjugations to $N^{K_1}$ and $N^{K_2}$ we have that both filters are also normal. Since $K_1$ and $K_2$ are disjoint, $N^{K_1}\cap N^{K_2}=\{1\}$, which contradicts the assumption that the kite is subdirectly irreducible. Therefore, all indexes $i$ and $j$ of $I_0$ are connected, which completes the proof.
\end{proof}

We note that if (1) of the latter theorem holds, $|I_0|>1$, and $I_1$ is non-empty, then for each $i \in I_1$, $\lambda(i)\ne i$.

In addition, if $\mathbf G$ is trivial (consequently $\mathbf G$ is subdirectly irreducible), then $K^\lambda_{I_0,I_1}(\mathbf G)$ is isomorphic to $\mathbf Z^-$  which is also subdirectly irreducible.

In what follows, we show that if the kite  $K^\lambda_{I_0,I_1}(\mathbf G)$ is subdirectly irreducible, then $I_0$ is at most countable and $\lambda$ is bijective.

\begin{proposition}\label{pr:5.5}
Let $K^\lambda_{I_0,I_1}(\mathbf G)$ be a subdirectly irreducible kite and $\mathbf G$ a non-trivial integral residuated lattice. Then $I_0=I_1\cup \lambda(I_1)$ and $I_0$ is at most countably infinite. 
\end{proposition}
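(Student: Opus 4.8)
The plan is to obtain both conclusions as essentially combinatorial consequences of the connectedness property established in Theorem \ref{th:5.4}. We may assume $I_1\neq\emptyset$; since $\mathbf G$ is non-trivial and the kite $K^\lambda_{I_0,I_1}(\mathbf G)$ is subdirectly irreducible, Theorem \ref{th:5.4} then supplies, for every pair $i,j\in I_0$, an integer $m\ge 0$ with $\lambda^m(i)=j$ or $\lambda^m(j)=i$. Throughout I would also use two structural facts already recorded in Section 3: each iterate $\lambda^m$ is injective on its domain $I_m$, and $\lambda$ maps $I_n$ into $I_{n-1}$, so that in particular $I_m\subseteq I_1$ for all $m\ge 1$.

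First I would prove the equality $I_0=I_1\cup\lambda(I_1)$. The inclusion $\supseteq$ is immediate from $I_1\subseteq I_0$ and $\lambda(I_1)\subseteq I_0$. For $\subseteq$, suppose toward a contradiction that some $i\in I_0$ lies in neither $I_1$ nor $\lambda(I_1)$. Since $I_m\subseteq I_1$ for $m\ge 1$, we have $i\notin I_m$ for every $m\ge 1$, so $\lambda^m(i)$ is defined only for $m=0$; moreover no index is carried onto $i$ by any positive iterate, because $\lambda^m(j)=i$ with $m\ge 1$ would give $i=\lambda\bigl(\lambda^{m-1}(j)\bigr)\in\lambda(I_1)$. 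Applying the connectedness property to $i$ and an arbitrary $j\in I_0$ therefore forces $j=i$, so $I_0=\{i\}$; but then $I_1\subseteq\{i\}$, and $I_1=\{i\}$ would give $\lambda(i)=i\in\lambda(I_1)$ while $I_1=\emptyset$ has been excluded, a contradiction. Hence $I_0\subseteq I_1\cup\lambda(I_1)$, and equality holds.

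For the countability statement, fix any $b\in I_0$ and split $I_0$ according to the connectedness property: $I_0=A\cup B$, where $B=\{\lambda^m(b)\colon m\ge 0,\ b\in I_m\}$ is the forward $\lambda$-orbit of $b$ and $A=\{i\in I_0\colon \lambda^m(i)=b\text{ for some }m\ge 0\}$. The set $B$ is the image of a subset of $\mathbb N$ under $m\mapsto\lambda^m(b)$, hence at most countable. The set $A$ is contained in $\bigcup_{m\ge 0}(\lambda^m)^{-1}(\{b\})$, and since each $\lambda^m$ is injective, every preimage $(\lambda^m)^{-1}(\{b\})$ is empty or a singleton; thus $A$ is a countable union of sets of size at most one, hence at most countable. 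Therefore $I_0=A\cup B$ is at most countably infinite.

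I do not expect a serious obstacle at this stage: the substantive work --- turning a pair of ``disconnected'' indices of $I_0$ into two normal filters of the kite whose intersection is $\{1\}$, and thereby ruling out disconnectedness for a subdirectly irreducible kite --- has already been carried out in Theorem \ref{th:5.4}. The only points that need a little care are the preliminary reduction to the case $I_1\neq\emptyset$ (without which the asserted equality fails, as the kite $K^{\emptyset}_{\{0\},\emptyset}(\mathbf G)$ of Example \ref{sub:3} shows) and the fact that one must invoke injectivity of the iterates $\lambda^m$, not merely of $\lambda$ itself, in order to bound the size of $A$.
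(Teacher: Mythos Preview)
Your proof is correct and follows essentially the same route as the paper: both arguments deduce $I_0=I_1\cup\lambda(I_1)$ and the countability of $I_0$ from the connectedness criterion of Theorem~\ref{th:5.4}, using that an index outside $I_1\cup\lambda(I_1)$ is an isolated point of the $\lambda$-dynamics and that the full (forward and backward) $\lambda$-orbit $P(i)=\{\lambda^m(i)\colon m\in\mathbb Z,\ \lambda^m(i)\text{ defined}\}$ of any index is countable (injectivity of the iterates $\lambda^m$ being the key point for the backward part).

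One point worth flagging is that your ``reduction'' to $I_1\neq\emptyset$ is not a reduction but a genuine additional hypothesis, and you are right that it is needed: the kite $K^{\emptyset}_{\{0\},\emptyset}(\mathbf G)$ of Example~\ref{sub:3} with $\mathbf G$ non-trivial and subdirectly irreducible is itself subdirectly irreducible, yet $I_0=\{0\}\neq\emptyset=I_1\cup\lambda(I_1)$. The paper's proof tacitly assumes $I_1\neq\emptyset$ as well, since it invokes disconnectedness from a point of $I_1\cup\lambda(I_1)$ and relies on Theorem~\ref{th:5.4}, whose hypotheses include $I_1$ non-empty; so your observation is a correct refinement of the statement rather than a deviation from the intended argument.
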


\begin{proof}
If $I_0$ is empty, the statement is trivially satisfied. Thus, let $I_0$ be non-void. First, observe that if $I_0\setminus (I_1\cup \lambda(I_1))$ is
non-empty, then any $j\in I_1\cup \lambda(I_1)$  is
disconnected from any $i\in I_0\setminus (I_1 \cup\lambda(I_1))$.
Therefore, $I_0 =I_1\cup \lambda(I_1)$. It follows that $I_0$ is
countable if{}f $I_1$ is. Suppose $I_0$ and $I_1$ are uncountable and pick an $i\in I_0$. Consider the set $P(i) = \{\lambda^m(i)\colon \text{ such that } \lambda^m(i) \text { is defined},  m \in \mathbb Z\}$. Clearly $P(i)$ is
at most countable; so there is a $j\in I_0\setminus P(i)$.
But $P(i)$ exhausts all finite paths of back-and-forth beginning from $i$. Then, $i$ and $j$ are disconnected, contradicting Theorem~\ref{th:5.4}.
\end{proof}

\begin{remark}\label{re:5.6} Under the conditions of Proposition {\rm \ref{pr:5.5}}, $I_n= I_{n+1}\cup \lambda(I_{n+1})$ for $n \ge 1$. This can be proved in the same way as the equality $I_0=I_1\cup \lambda(I_1)$ was proved in the foregoing statement. In particular, if $|I_0|=\aleph_0$, then $|I_n|=\aleph_0$ for each $n\ge 1$.
\end{remark}

We note that it can happen that, for a subdirectly irreducible kite $K^\lambda_{I_0,I_1}(\mathbf G)$, $\lambda$ is not necessarily bijective:

\begin{example}\label{ex:5.7}
Let $\mathbf G$ be a subdirectly non-trivial integral residuated lattice.

{\rm (1)} Let $I_0=\{0,1,2,3\}$, $I_1=\{0,1,2\}$, $\lambda: 0\mapsto 1\mapsto 2\mapsto 3$. Then $I_2=\{1,2\}$, $I_3=\{2\}$, $I_m =\emptyset$ for each $m\ge 4$, $I_0$ is a unique connected component of $I_0$, and $K^\lambda_{I_0,I_1}(\mathbf G)$ is a subdirectly irreducible kite. Clearly, $\lambda$ is not bijective.

{\rm (2)} Let $I_0=\{0,1,2,3\}$, $I_1=\{0,1,2,3\}$ and let $\lambda$ be the identity on $I_1$. Then $I_m=I_1$ for each $m\ge 2$ and $K^\lambda_{I_0,I_1}(\mathbf G)$ is not subdirectly irreducible.

{\rm (3)} Let $I_0=\{0,1,2\}=I_1$ and let $\lambda$ be the identity on $I_1$. Then $I_m=I_0$ for each $m\ge 1$ and and $K^\lambda_{I_0,I_1}(\mathbf G)$ is not subdirectly irreducible.
\end{example}

If $I_0$ is a finite set, the kite $K^\lambda_{I_0,I_1}(\mathbf G)$ is said to be {\it finite-dimensional}.

Now we present the following complete descriptions of subdirectly irreducible finite-dimensional kites. In such a case, if $I_0=\{0,\ldots,m-1\}$ and $I_1=\{0,\ldots, n-1\}$, $n,m\ge 1$, we will write $K^\lambda_{m,n}(\mathbf G)$ instead of $K^\lambda_{I_0,I_1}(\mathbf G)$.

As from Example \ref{sub:5} it follows that if $\mathbf G$ is trivial, then the kite $K^\lambda_{I_0,I_1}(\mathbf G)$ is isomorphic to $\mathbf Z^-$ which is subdirectly irreducible. For non-trivial $\mathbf G$, we have the following characterizations of subdirectly irreducible kites.

\begin{theorem}\label{th:5.8}
Let $\mathbf G$ be a non-trivial integral residuated lattice, $K^\lambda_{I_0,I_1}(\mathbf G)$ a subdirectly irreducible kite, and $I_0$ be finite. Then $K^\lambda_{I_0,I_1}(\mathbf G)$ is isomorphic to one of the following kites:
\begin{itemize}
\item[{\rm(1)}] {\rm (i)} $K^\emptyset_{\emptyset,\emptyset}(\mathbf G)\cong \mathbf Z^-$, {\rm (ii)}  $K^{Id}_{\{0\},\{0\}}(\mathbf G)\cong \mathbf G \,\overleftarrow{\times}\, \mathbf Z^-$ and $\mathbf G$ subdirectly irreducible,  {\rm (iii)} $K^\emptyset_{\{0\},\emptyset}(\mathbf G)$ and $\mathbf G$ subdirectly irreducible.
\item[{\rm(2)}] $K^\lambda_{n,n}(\mathbf G)$ with $\lambda(i)=i+1 (\text{mod }n)$ for $i=0,\ldots,n-1$ and $n\ge 2$.
\item[{\rm(3)}] $K^\lambda_{n+1,n}(\mathbf G)$ with $\lambda(i)=i+1$ for $i=0,\ldots,n-1$ and $n\ge 1$.
\end{itemize}
\end{theorem}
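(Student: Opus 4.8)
The plan is to exploit the structural results already established: Proposition~\ref{pr:5.5} tells us that a subdirectly irreducible kite with $I_0$ finite must satisfy $I_0 = I_1 \cup \lambda(I_1)$, and Theorem~\ref{th:5.4} tells us that $I_0$ is a single connected component, i.e.\ the graph on $I_0$ with edges $\{i, \lambda(i)\}$ for $i \in I_1$ is connected. So the first step is to analyze the combinatorics of an injective partial map $\lambda$ on a finite set whose "graph" is connected. I would think of $I_0$ as the vertex set of a directed graph with an edge $i \to \lambda(i)$ for each $i \in I_1$; since $\lambda$ is injective, every vertex has out-degree $\le 1$ and in-degree $\le 1$, so each connected component of the underlying undirected graph is either a simple directed path or a simple directed cycle. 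Connectedness forces the \emph{whole} of $I_0$ to be one such path or one such cycle.

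The second step is to translate these two combinatorial shapes into the two nontrivial normal forms (2) and (3), plus handle the small/degenerate cases in (1). If the component is a cycle of length $n$, then every vertex is in the domain of $\lambda$, so $I_1 = I_0$ and, after renumbering $I_0$ as $\{0,\dots,n-1\}$ following the cycle, $\lambda(i) = i+1 \pmod n$; this is case (2), and one checks $n \ge 2$ is needed since Example~\ref{ex:5.7}(2),(3) show that a "cycle" via the identity (i.e.\ a union of fixed points) is not subdirectly irreducible — but a single fixed point $n=1$ is exactly the subdirectly irreducible kite $K^{Id}_{\{0\},\{0\}}(\mathbf G) \cong \mathbf G\,\overleftarrow{\times}\,\mathbf Z^-$ listed in (1)(ii) (and here we must also invoke Proposition~\ref{pr:5.3} to get that $\mathbf G$ is subdirectly irreducible). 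If the component is a path, then exactly one vertex (the "end" of the path, where $\lambda$ is undefined) lies in $I_0 \setminus I_1$; renumbering so the path runs $0 \to 1 \to \dots \to n$, we get $I_0 = \{0,\dots,n\}$, $I_1 = \{0,\dots,n-1\}$, and $\lambda(i) = i+1$ — this is case (3) with $n \ge 1$. The length-zero path ($|I_0| = 1$, $I_1 = \emptyset$) is $K^\emptyset_{\{0\},\emptyset}(\mathbf G)$, case (1)(iii), and the empty case $I_0 = \emptyset$ is $K^\emptyset_{\emptyset,\emptyset}(\mathbf G) \cong \mathbf Z^-$, case (1)(i).

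The third step is to confirm that the listed isomorphisms are genuine isomorphisms of residuated lattices, not merely bijections of index sets. This amounts to checking that a permutation $\sigma$ of $I_0$ that is compatible with $\lambda$ (meaning $\sigma \circ \lambda = \lambda' \circ \sigma$ on $I_1$, where $\lambda'$ is the normalized map) and extends to each $I_n$ induces an isomorphism $K^\lambda_{I_0,I_1}(\mathbf G) \to K^{\lambda'}_{I_0,I_1}(\mathbf G)$ by relabeling coordinates; this is routine from the explicit formulas for $\cdot$, $\bss$, $\sslash$ in Section~3, since all of them are coordinatewise operations mediated through powers of $\lambda$. The genuinely nontrivial part — and the main obstacle — is actually already behind us, namely the forward implication that subdirect irreducibility forces the connectedness and the $I_0 = I_1 \cup \lambda(I_1)$ conditions; but the converse (that cases (2) and (3) really \emph{are} subdirectly irreducible for subdirectly irreducible $\mathbf G$) needs Theorem~\ref{th:5.4}(1)$\Rightarrow$(2): one verifies that in a path or a cycle, any two indices $i,j$ are connected by some $\lambda^m$ in one direction or the other, which is immediate from the explicit description of $\lambda$. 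I expect the only place demanding care is the bookkeeping of the renumbering maps — ensuring a single permutation of $I_0$ simultaneously realizes the normal form on every $I_n$ — together with making sure the degenerate boundary cases ($n=1$ cycle versus $n\ge 2$ cycle, length-zero path) are slotted into the correct clause of (1), using Example~\ref{ex:5.7} to rule out the spurious identity-cycle cases.
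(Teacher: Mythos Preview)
Your proposal is correct and follows essentially the same route as the paper. The paper argues by a direct case split on whether $|I_0|=|I_1|$ (forcing $\lambda$ to be a bijection, hence a single cycle by the connectedness criterion of Theorem~\ref{th:5.4}) or $|I_1|<|I_0|$ (where two distinct elements of $I_0\setminus I_1$ would be disconnected, forcing $|I_0\setminus I_1|=1$ and then a single path after renumbering); you package the same dichotomy in the language of functional graphs with in/out-degree at most~1, which is a cosmetic difference. One minor point: the theorem as stated is a one-way classification, so your third step verifying the converse via Theorem~\ref{th:5.4}(1)$\Rightarrow$(2) is extra but harmless.
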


\begin{proof}
Assume that $|I_0|=n=|I_1|$. If $n=0,1$, then $K^\lambda_{I_0,I_1}(\mathbf G)$ is isomorphic to corresponding  Examples \ref{sub:1}--\ref{sub:3}.

Now let $n>1$. Then $\lambda$ is a bijection on the set $I_1=\{0,1,\ldots, n-1\}=I_0$. We assert that $\lambda$ is cyclic. If not, then there are $i,j \in I_0$ such that $j$ does not belong to the orbit $P(i) = \{\lambda^m(i)\colon \text{ such that } \lambda^m(i) \text { is defined},  m \in \mathbb Z\}$ of the element $i$, consequently, $i$ and $j$ are disconnected which contradicts Theorem \ref{th:5.4}.  We can renumber $I_1$ following the cycle $\lambda$, so that $\lambda(j)= j+1 (\mbox{mod } n)$, $j \in I_1$.

Now assume that $n=|I_1|<|I_0| =n+m$.  Suppose $m>1$. Then we can find two distinct elements $k_1,k_2 \in I_0\setminus I_1$. An easy inspection shows that $k_1$ and $k_2$ are disconnected, which contradicts Theorem \ref{th:5.4}, and it yields $m=1$.

Hence, if $n=0$, then $n+m=1$ and the kite $K^\lambda_{I_0,I_1}(\mathbf G)$ is isomorphic to the kite $K^\emptyset_{\{0\},\emptyset}(\mathbf G)$ described in Example \ref{sub:3}.

Assume $I_0=\{0,1,\ldots,n\}$ and $I_1=\{0,1,\ldots,n-1\}$  for $n\ge 1$. If $n$ is not in the range of $\lambda$, then $n$ is disconnected from any $i<n$, so $n$ must be in the range of $\lambda$. Therefore, without loss of generality, we cam assume that the range of $\lambda$ is the set $\{1,\ldots,n\}$. After renumbering, we can assume that $\lambda(i)=i+1$ for $i=0,\ldots,n-1$. Whence, $I_m=\{0,1,\ldots,n-m\}$ for $m=1,\ldots,n$ and $I_m=\emptyset$ for $m> n$.
\end{proof}

If the set $I_0$ is infinite and the kite $K^\lambda_{I_0,I_1}(\mathbf G)$ is subdirectly irreducible, then according to Proposition \ref{pr:5.5}, $|I_0|=\aleph_0$. In addition, $|I_1|=\aleph_0$.

\begin{proposition}\label{pr:5.9}
Let $\mathbf G$ be a non-trivial integral residuated lattice, $K^\lambda_{I_0,I_1}(\mathbf G)$ a subdirectly irreducible kite, and $I_0=\aleph_0$. Then one of the following situations happens:

\begin{itemize}
\item[{\rm(1)}] $I_0=I_1$ and $\lambda$ is bijective.
\item[{\rm(2)}] $\lambda$ is bijective and $|\lambda(I_1)\setminus I_1|=1$.
\item[{\rm(3)}] $I_0=I_1$ and $|I_1\setminus \lambda(I_1)|= 1$.
\end{itemize}
\end{proposition}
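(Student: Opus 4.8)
The plan is to reduce everything to a study of the orbit structure of the injective partial map $\lambda$ on $I_0$ (with domain $I_1$), using the connectedness criterion of Theorem~\ref{th:5.4} and the facts already recorded in Proposition~\ref{pr:5.5}. To begin, Proposition~\ref{pr:5.5} gives $I_0 = I_1 \cup \lambda(I_1)$; since $\lambda$ is injective, $|\lambda(I_1)| = |I_1|$, so $|I_0| = \aleph_0$ forces $|I_1| = \aleph_0$ as well. By Theorem~\ref{th:5.4} (applicable since $I_1 \ne \emptyset$ and $\mathbf G$ is non-trivial), subdirect irreducibility of the kite amounts to $\mathbf G$ being subdirectly irreducible together with the condition that for all $i,j \in I_0$ there is $m \ge 0$ with $\lambda^m(i) = j$ or $\lambda^m(j) = i$; call such $i,j$ \emph{connected}.

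The two quantities to control are $I_0 \setminus I_1$ (indices with no $\lambda$-successor) and $I_0 \setminus \lambda(I_1)$ (indices with no $\lambda$-predecessor). First I would show each has at most one element: if distinct $i,j$ both lie in $I_0\setminus I_1$, then $\lambda^m(i)$ and $\lambda^m(j)$ are undefined for $m\ge 1$, so neither $\lambda^m(i)=j$ nor $\lambda^m(j)=i$ can hold, contradicting connectedness; the argument for $I_0\setminus\lambda(I_1)$ is symmetric, with predecessors in place of successors.

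Next — and this is the step where infinitude of $I_0$ is genuinely needed — I would rule out that $I_0\setminus I_1$ and $I_0\setminus\lambda(I_1)$ are both nonempty. Suppose $I_0\setminus I_1=\{b\}$ and $I_0\setminus\lambda(I_1)=\{a\}$ with $a\ne b$. Connectedness of $a$ and $b$ forces $b=\lambda^m(a)$ for some $m\ge 1$, since $\lambda^k(b)=a$ is impossible as $b\notin I_1$. Then I would check that the finite forward orbit $\{a,\lambda(a),\dots,\lambda^m(a)=b\}$ already exhausts $I_0$: any $j$ connected to $a$ satisfies $\lambda^k(a)=j$ (forcing $j$ into this orbit, which cannot be continued past $b$ since $b\notin I_1$) or $\lambda^k(j)=a$, and $k\ge 1$ would place $a$ in $\lambda(I_1)$, impossible, so $k=0$ and $j=a$. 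This makes $I_0$ finite, a contradiction. Hence at least one of $I_0\setminus I_1$, $I_0\setminus\lambda(I_1)$ is empty.

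Finally I would collect the surviving possibilities. If both sets are empty, $I_1=I_0$ and $\lambda(I_1)=I_0$, so $\lambda$ is a bijection of $I_0$ — case (1). If $I_0\setminus I_1=\emptyset$ and $|I_0\setminus\lambda(I_1)|=1$, then $I_0=I_1$ and $|I_1\setminus\lambda(I_1)|=1$ — case (3). If $|I_0\setminus I_1|=1$ and $I_0\setminus\lambda(I_1)=\emptyset$, then $\lambda(I_1)=I_0$, so $\lambda\colon I_1\to I_0$ is a bijection, and $\lambda(I_1)\setminus I_1=I_0\setminus I_1$ has exactly one element — case (2). I expect the only delicate point to be the orbit-exhaustion argument in the third paragraph, where both directions of the connectedness relation must be handled to force the finite path to be all of $I_0$; the rest is bookkeeping about the domain and range of $\lambda$.
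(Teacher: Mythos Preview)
Your proposal is correct and follows essentially the same route as the paper's proof: both invoke Proposition~\ref{pr:5.5} to get $I_0=I_1\cup\lambda(I_1)$, bound each of the two ``defect'' sets by one element via a disconnectedness argument, exclude the case where both defects are nonempty by showing the forward orbit from the predecessor-free element to the successor-free element would exhaust $I_0$ and make it finite, and then read off the three surviving cases. Your sets $I_0\setminus I_1$ and $I_0\setminus\lambda(I_1)$ coincide (given $I_0=I_1\cup\lambda(I_1)$) with the paper's $\lambda(I_1)\setminus I_1$ and $I_1\setminus\lambda(I_1)$, so the two write-ups differ only in notation; the one small point you leave implicit --- that $a\ne b$ is forced since an element outside both $I_1$ and $\lambda(I_1)$ could not lie in $I_0$ --- is immediate.
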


\begin{proof}
By Proposition \ref{pr:5.5}, we have $I_0=I_1\cup \lambda(I_1)$. If there are two distinct indices $i,j \in I_1\setminus \lambda(I_1)$, then $i$ and $j$  are disconnected. Therefore, $|I_1\setminus \lambda(I_1)|\le 1$. In a similar way, we have $|\lambda(I_1)\setminus I_1|\le 1$. We have the following four cases.

\vspace{2mm}
\noindent
{\it Case 1}. Assume $|I_1\setminus \lambda(I_1)|= 1$ and $|\lambda(I_1)\setminus I_1|=1$.

Choose $i\in I_1\setminus \lambda(I_1)$ and $j \in  \lambda(I_1)\setminus I_1$. By Theorem \ref{th:5.4}, $i$ and $j$ are connected. Therefore, there is an integer $m\ge 0$ such that $j=\lambda^m(i)$ (the second possibility $i=\lambda^m(j)$ is excluded because $j\notin I_1$). Define $k_n:=\lambda^n(i)$ for each $n=0,\ldots,m$. Take $k\in I_0 \setminus\{k_0,\ldots,k_m\}$ and consider $k_0=i$; then $k \in I_1 \cap \lambda(I_1)$. Since $k$ and $j$ are connected, there is an integer $s\ge 0$ such $\lambda^s(k)=j$. Then $\lambda^s(k)=\lambda^m(i)$. We have three subcases: (a) $s=m$ which yields $k=i$, a contradiction. (b) $s<m$, then $k=\lambda^{m-s}(i)$ which contradicts the choice of $k$. (c) $s>m$, then $\lambda^{s-m}(k)=i$. Since $s-m\ge 1$, we have $i\in \lambda(I_1)$ which is absurd, and Case 1 is excluded.

\vspace{2mm}
\noindent
{\it Case 2}. Assume $|I_1\setminus \lambda(I_1)|= 1$ and $|\lambda(I_1)\setminus I_1|=0$.

Then $\lambda(I_1)\subseteq I_1$ and $I_0=I_1\cup \lambda(I_1)=I_1$ which establishes (3).

\vspace{2mm}
\noindent
{\it Case 3}. Assume $|I_1\setminus \lambda(I_1)|= 0$ and $|\lambda(I_1)\setminus I_1|=1$.

Then $I_1\subseteq \lambda(I_1)$ which gives $I_0=I_1 \cup \lambda(I_1)=\lambda(I_1)$ and $\lambda$ is bijective which yields (2).

\vspace{2mm}
\noindent
{\it Case 4}. Assume $|I_1\setminus \lambda(I_1)|= 0= |\lambda(I_1)\setminus I_1|$.

Then $I_1\subseteq \lambda(I_1)\subseteq I_1$. Then $I_0=\lambda(I_1)=I_1$ and $\lambda$ is bijective which proves (1).
\end{proof}

\begin{remark}\label{re:5.10}
{\rm Let the conditions of Proposition \ref{pr:5.9} hold. If some of cases (1)--(3), holds, changing $I_1$ by $I_n$ and $I_0$ by $I_{n-1}$, then the same case holds also for $I_n$ and $I_{n-1}$ for each $n \ge 1$.
}
\end{remark}

Situations following from Propositions \ref{pr:5.9} are characterized as follows:

Case (1): $K^\lambda_{\mathbb Z, \mathbb Z}(\mathbf G)$ with $\lambda(i)=i+1$.

Case (2): $K^\lambda_{\mathbb N, \mathbb N\setminus\{0\}}(\mathbf G)$ with $\lambda(i)=i-1$.

Case (3):  $K^\lambda_{\mathbb N, \mathbb N}(\mathbf G)$ with $\lambda(i)=i+1$.

From the proof of Proposition \ref{pr:5.9}, one follows that kites from Case (1)--Case (3) are not mutually isomorphic.

Now we describe all subdirectly irreducible kites with $\mathbf G\ne \{e\}$ when $I_0$ is countably infinite.

\begin{theorem}\label{th:5.11}
Let $K^\lambda_{I_0,I_1}(\mathbf G)$ be a subdirectly irreducible kite, $\mathbf G$  non-trivial, and $|I_0|=\aleph_0$. Then $K^\lambda_{I_0,I_1}(\mathbf G)$ is isomorphic to just one of the following kites:
\begin{itemize}
\item[{\rm (1)}] $K^\lambda_{\mathbb Z, \mathbb Z}(\mathbf G)$ with $\lambda(i)=i+1$.

\item[{\rm (2)}] $K^\lambda_{\mathbb N, \mathbb N\setminus\{0\}}(\mathbf G)$ with $\lambda(i)=i-1$.

\item[{\rm (3)}]  $K^\lambda_{\mathbb N, \mathbb N}(\mathbf G)$ with $\lambda(i)=i+1$.
\end{itemize}
\end{theorem}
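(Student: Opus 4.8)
The plan is to peel the problem in three layers. First, use Proposition~\ref{pr:5.9} to narrow the index data $(I_0,I_1,\lambda)$ down to three configurations. Second, use the connectedness condition supplied by Theorem~\ref{th:5.4} (applicable because $\mathbf G$ is non-trivial and, since $|I_0|=\aleph_0$, also $|I_1|=\aleph_0$ by Proposition~\ref{pr:5.5}) to produce, in each configuration, a re-indexing bijection identifying the kite with the corresponding displayed one. Third, separate the three displayed kites by an isomorphism invariant. Concretely, Proposition~\ref{pr:5.9} leaves exactly the cases: (1) $I_0=I_1$ and $\lambda$ a permutation of $I_0$; (2) $\lambda\colon I_1\to I_0$ a bijection and $I_0\setminus I_1=\{j_0\}$ for a single index $j_0$; (3) $I_0=I_1$, $\lambda$ injective, and $I_0\setminus\lambda(I_0)=\{j_0\}$ for a single index $j_0$.

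For the re-indexing step I use the elementary remark that a bijection $\phi$ of index sets carrying $I_1'$ onto $I_1$ and intertwining $\lambda'$ with $\lambda$ automatically carries $I_n'$ onto $I_n$ for every $n$ and induces an isomorphism of the two kites, this being a coordinatewise relabelling that one checks directly from the formulas for $\cdot$, left and right division. In case (1) fix $i_0\in I_0$ and put $\phi(n)=\lambda^n(i_0)$, $n\in\mathbb Z$; surjectivity of $\phi$ is exactly the connectedness of each $j$ with $i_0$, and injectivity holds because $\lambda^{n'-n}(i_0)=i_0$ for some $n'>n$ would turn the finite $\lambda$-orbit of $i_0$ into a connected component, forcing $I_0$ finite. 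Since $\phi(n+1)=\lambda(\phi(n))$ and $I_1=I_0$, the kite is isomorphic to $K^\lambda_{\mathbb Z,\mathbb Z}(\mathbf G)$ with $\lambda(i)=i+1$. In case (3) put $\phi(n)=\lambda^n(j_0)$, $n\in\mathbb N$; surjectivity follows from connectedness, the alternative $\lambda^m(j)=j_0$ with $m\ge1$ being excluded since $j_0\notin\lambda(I_0)$, and injectivity follows by cancelling the injective map $\lambda^n$; here $I_1=I_0$, so the kite is isomorphic to $K^\lambda_{\mathbb N,\mathbb N}(\mathbf G)$ with $\lambda(i)=i+1$.

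Case (2) is the one requiring a little care. Since $\lambda\colon I_1\to I_0$ is a bijection, $\lambda^{-1}$ is defined on all of $I_0$; put $\phi(n)=\lambda^{-n}(j_0)$, $n\in\mathbb N$, noting that $\phi(0)=j_0\notin I_1$ while $\phi(n)\in I_1$ for every $n\ge1$. Surjectivity comes from connectedness, the alternative $\lambda^m(j_0)=j$ with $m\ge1$ being impossible because $j_0\notin I_1\supseteq I_m$, and injectivity from cancelling $\lambda$. Since $\lambda(\phi(n+1))=\phi(n)$ and $\phi$ maps $\mathbb N\setminus\{0\}$ onto $I_1$, the kite is isomorphic to $K^\lambda_{\mathbb N,\mathbb N\setminus\{0\}}(\mathbf G)$ with $\lambda(i)=i-1$. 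Thus every subdirectly irreducible kite with $|I_0|=\aleph_0$ is isomorphic to one of the three listed.

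It remains to see that the three are pairwise non-isomorphic, which is essentially the remark following Proposition~\ref{pr:5.9} and can be argued as follows. Every proper filter of a kite is contained in $G^{I_0}$ (as observed in the proof of Proposition~\ref{pr:5.2}), while $G^{I_0}$ is itself a proper normal filter (Proposition~\ref{pr:5.1}); hence $G^{I_0}$ is the unique maximal proper normal filter, and so is preserved by every isomorphism, as is the greatest element $t=\langle e\colon i\in I_1\rangle$ of the complement of $G^{I_0}$. Now $t\cdot w=\langle w_i\colon i\in I_1\rangle$ and $w\cdot t=\langle w_{\lambda(i)}\colon i\in I_1\rangle$ for $w\in G^{I_0}$, so the two self-maps $w\mapsto t\cdot w$ and $w\mapsto w\cdot t$ of $G^{I_0}$ are restriction to $I_1$ and pullback along $\lambda$; since $\mathbf G$ is non-trivial, the first is injective iff $I_1=I_0$ and the second iff $\lambda(I_1)=I_0$, and injectivity of either is an isomorphism invariant of the kite (because $G^{I_0}$ and $t$ are, and isomorphisms are injective homomorphisms commuting with $\cdot$). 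These two properties read (true, true) for $K^\lambda_{\mathbb Z,\mathbb Z}(\mathbf G)$, (true, false) for $K^\lambda_{\mathbb N,\mathbb N}(\mathbf G)$, and (false, true) for $K^\lambda_{\mathbb N,\mathbb N\setminus\{0\}}(\mathbf G)$, so no two are isomorphic; together with the previous step this proves the theorem. The main obstacle is not any single step but the combination: keeping the partial iterates $\lambda^{\pm n}$ straight in case (2), and pinning down invariants that genuinely survive arbitrary kite isomorphisms for the non-isomorphism part.
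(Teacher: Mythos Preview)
Your proof is correct and, for the classification into three types, follows essentially the same route as the paper: invoke Proposition~\ref{pr:5.9} to isolate the three configurations of $(I_0,I_1,\lambda)$, then use connectedness (Theorem~\ref{th:5.4}) to renumber. Your explicit bijections $\phi$ and the accompanying injectivity/surjectivity checks are exactly the ``renumbering'' the paper alludes to; the paper simply asserts it without writing down $\phi$.

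Where you genuinely add something is the ``just one'' clause. The paper disposes of pairwise non-isomorphism in a single sentence before the theorem (``From the proof of Proposition~\ref{pr:5.9}, one follows that kites from Case (1)--Case (3) are not mutually isomorphic''), which at best shows that the three \emph{frames} $(I_0,I_1,\lambda)$ are structurally different, not that the resulting residuated lattices cannot be isomorphic by some map unrelated to an index relabelling. Your argument via the unique maximal proper normal filter $G^{I_0}$, the canonical element $t$, and the injectivity patterns of $w\mapsto t\cdot w$ and $w\mapsto w\cdot t$ supplies an honest isomorphism invariant and closes that gap. One small slip: these are not ``self-maps of $G^{I_0}$'' --- they land in $G^{I_1}$ --- but this is harmless, since injectivity of a map $G^{I_0}\to K$ defined by multiplication with the isomorphism-invariant element $t$ is itself invariant, which is all you use.
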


\begin{proof}
We use Proposition \ref{pr:5.9}.
Case (1). Let $\lambda$ be bijective. Then $\lambda$ is cyclic, otherwise there are $i,j \in I_0$ such that $j$ is not in the orbit $P(i) = \{\lambda^m(i)\colon \text{ such that }$ $ \lambda^m(i) \text { is defined},  m \in \mathbb Z\}$ of $i$, and $i$ and $j$ are disconnected, a contradiction. Hence, we can assume that $I_0=I_1 =\mathbb Z$ and $\lambda(i)=i+1$, and $K^\lambda_{I_0,I_1}(\mathbf G)$ is isomorphic to $K^\lambda_{\mathbb Z,\mathbb Z}(\mathbf G)$.

Case (2). If there is a unique $j\in I_0$ which does not belong to $I_1$, we can assume after renumbering that $I_1=\mathbb N\setminus\{0\}$, $I_0=\mathbb N$ and $\lambda(i)=i-1$.

Case (3). If there is a unique $j\in I_1$ which does not belong to the range of $\lambda$, then we can assume after renumbering that $I_0=\mathbb Z= I_1$ and $\lambda(i)=i+1$.
\end{proof}

In what follows we show that the following version of the Birkhoff Subdirect Representation theorem holds which says that every kite is subdirectly embeddable into a product of subdirectly irreducible kites.

\begin{proposition}\label{pr:5.12}
Let $\mathbf G$  be an integral residuated lattice which is subdirectly representable as $\mathbf G \le \prod_{s\in S}\mathbf G_s$, where each $\mathbf G_s$ is an integral residuated lattice. Then the kite $K^\lambda_{I_0,I_1}(\mathbf G)$ is subdirectly representable as $K^\lambda_{I_0,I_1}(\mathbf G) \le \prod_{s\in S} K^\lambda_{I_0,I_1}(\mathbf G_s)$.
\end{proposition}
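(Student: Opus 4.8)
\emph{Strategy.} The key observation is that the assignment $\mathbf G\mapsto K^\lambda_{I_0,I_1}(\mathbf G)$ is functorial: a homomorphism $f\colon\mathbf G\to\mathbf H$ of integral residuated lattices lifts, entrywise, to a homomorphism $\hat f\colon K^\lambda_{I_0,I_1}(\mathbf G)\to K^\lambda_{I_0,I_1}(\mathbf H)$, and this lifting preserves injectivity and surjectivity. Granting this, the proposition is a one-line assembly: compose the lifted subdirect embedding with the canonical map into the product of the $K^\lambda_{I_0,I_1}(\mathbf G_s)$. We may assume $S\ne\emptyset$; the degenerate case $S=\emptyset$ forces $\mathbf G$ to be trivial, and then by Example~\ref{sub:5} one may instead represent $K^\lambda_{I_0,I_1}(\mathbf G)\cong\mathbf Z^-$ over a one-element index set with trivial factor.

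\emph{Step 1 (functoriality).} Given $f\colon\mathbf G\to\mathbf H$, define $\hat f(\langle x_i\colon i\in I_n\rangle)=\langle f(x_i)\colon i\in I_n\rangle$ for every $n\in\mathbb N$. I would check that $\hat f$ is a homomorphism. It maps level-$n$ elements to level-$n$ elements, hence preserves the lexicographic order, and therefore $\wedge$ and $\vee$ (on a fixed level these are computed coordinatewise, where $f$ preserves them; across two levels the meet and the join each equal one of the two arguments, which $\hat f$ clearly respects). Since $f$ is a monoid homomorphism, $\hat f(\langle x_i\rangle\cdot\langle y_i\rangle)=\langle f(x_{\lambda^{n}(i)}y_i)\rangle=\langle f(x_{\lambda^{n}(i)})f(y_i)\rangle=\hat f\langle x_i\rangle\cdot\hat f\langle y_i\rangle$, and $\hat f(1)=1$. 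For $\bss$ and $\sslash$: the case distinctions in their definitions depend only on the levels $m,n$, not on the entries, and since $f(e)=e$, $f(x\backslash y)=f(x)\backslash f(y)$, $f(y/x)=f(y)/f(x)$, the map $\hat f$ commutes with both residuals (including the $e$-padding on the indices lying outside $\lambda^{m}(I_n)$, resp.\ outside $I_m$). Finally, $\hat f$ is injective whenever $f$ is (it acts entrywise and preserves levels) and surjective whenever $f$ is (lift each entry of a given tuple); one also has $\widehat{g\circ f}=\hat g\circ\hat f$ and $\widehat{\mathrm{id}}=\mathrm{id}$.

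\emph{Step 2 (assembly).} Let $\iota\colon\mathbf G\hookrightarrow\prod_{s\in S}\mathbf G_s$ be the given subdirect embedding, with product projections $p_s$, and put $g_s:=p_s\circ\iota\colon\mathbf G\to\mathbf G_s$, which is surjective for each $s$. Define
$$
\Phi\colon K^\lambda_{I_0,I_1}(\mathbf G)\longrightarrow\prod_{s\in S}K^\lambda_{I_0,I_1}(\mathbf G_s),\qquad \Phi(a)=\langle\,\hat g_s(a)\colon s\in S\,\rangle .
$$
By Step 1 each $\hat g_s$ is a homomorphism, so $\Phi$ is a homomorphism. It is injective: if $a\ne a'$ lie on different levels, so do $\hat g_s(a)$ and $\hat g_s(a')$ for any $s$; if they lie on a common level $n$, write $a=\langle x_i\rangle$, $a'=\langle x'_i\rangle$ and pick $k$ with $x_k\ne x'_k$; since $\iota$ is injective there is $s$ with $g_s(x_k)\ne g_s(x'_k)$, hence $\hat g_s(a)\ne\hat g_s(a')$. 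Moreover the $s$-th component of $\Phi$ is $\hat g_s$, which is surjective by Step 1 because $g_s$ is; thus every projection $\pi_s\circ\Phi$ is onto $K^\lambda_{I_0,I_1}(\mathbf G_s)$, so $\Phi$ is a subdirect embedding, as claimed.

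\emph{Main obstacle.} There is no genuine difficulty; the only point needing care is the routine verification in Step 1 that the entrywise map commutes with the two residuals $\bss$ and $\sslash$ in spite of their level-dependent case splits and the $e$-padding — which goes through precisely because those data are untouched by $f$ and $f(e)=e$. It is worth noting that $\Phi$ is in general \emph{not} an isomorphism onto the full product: a tuple in $\prod_{s}K^\lambda_{I_0,I_1}(\mathbf G_s)$ whose coordinates sit on different levels is never in the image of $\Phi$, which is exactly why the statement is phrased as subdirect representability rather than as a direct-product decomposition.
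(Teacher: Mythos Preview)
Your proof is correct and is essentially the natural approach; the paper itself gives no details beyond ``straightforward and based on Proposition~\ref{pr:5.2}''. Where the paper gestures at the normal-filter side (lift each $N_s$ to $N_s^{I_0}$ and intersect), you work on the dual homomorphism side via functoriality of $\mathbf G\mapsto K^\lambda_{I_0,I_1}(\mathbf G)$ --- these are equivalent formulations of the same argument, and your version is the more explicit of the two.
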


\begin{proof}
The proof is straightforward and it is based on Proposition \ref{pr:5.2}.
\end{proof}

Before stating the next result, we recall that in the same way as there was defined connectedness of two points of the set $I_0$ and the connected component of $I_0$, we can define connectedness of any two points of the set $I_1$ and the connected component of $I_1$. If $C_0$ is a connected component of $I_0$, then the set $C_1:=\lambda^{-1}(C_0)$ is a connected component of $I_1$. Let $\mathcal I(I_0)$ and $\mathcal I(I_1)$ be the set of connected components of $I_1$ and $I_0$, respectively. Then $\mathcal I(I_1)= \{\lambda^{-1}(C)\colon C \in \mathcal I(I_0)\}$.

\begin{theorem}\label{th:5.13}
Every kite is a subdirect product of a system of subdirectly irreducible kites.
\end{theorem}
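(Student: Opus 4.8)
The plan is to combine the Birkhoff Subdirect Representation Theorem for the variety of integral residuated lattices with the structural results already established about kites, reducing everything to the case of a subdirectly irreducible base algebra and a decomposition of the index sets into connected components. First I would fix a kite $K^\lambda_{I_0,I_1}(\mathbf G)$. If $I_0=\emptyset$ or $\mathbf G$ is trivial, then by Example \ref{sub:1} and Example \ref{sub:5} the kite is isomorphic to $\mathbf Z^-$, which is subdirectly irreducible, and there is nothing to prove. So assume $I_0\ne\emptyset$ and $\mathbf G$ non-trivial. By Birkhoff, $\mathbf G$ is a subdirect product $\mathbf G\le\prod_{s\in S}\mathbf G_s$ with each $\mathbf G_s$ subdirectly irreducible. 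By Proposition \ref{pr:5.12}, $K^\lambda_{I_0,I_1}(\mathbf G)$ subdirectly embeds into $\prod_{s\in S}K^\lambda_{I_0,I_1}(\mathbf G_s)$. Thus it suffices to prove the theorem for each kite $K^\lambda_{I_0,I_1}(\mathbf G_s)$ — that is, we may assume from now on that $\mathbf G$ itself is subdirectly irreducible.

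The second step handles the index sets. Decompose $I_0$ into its connected components $\{C_t\colon t\in T\}$ as in the proof of Theorem \ref{th:5.4}, with the corresponding decomposition $\lambda^{-1}(C_t)$ of $I_1$ recalled in the paragraph preceding the statement. For each $t\in T$ let $\lambda_t$ be the restriction of $\lambda$ to $\lambda^{-1}(C_t)$, which maps into $C_t$; set $I_0^{(t)}:=C_t$ and $I_1^{(t)}:=\lambda^{-1}(C_t)$. I would then exhibit a subdirect embedding
\[
K^\lambda_{I_0,I_1}(\mathbf G)\ \hookrightarrow\ \prod_{t\in T} K^{\lambda_t}_{I_0^{(t)},I_1^{(t)}}(\mathbf G),
\]
by sending $\langle x_i\colon i\in I_n\rangle$ to the tuple whose $t$-th component is the restriction $\langle x_i\colon i\in I_n\cap C_t\rangle$ (note $I_n$ inherits the same component decomposition, since $i$ and $\lambda^m(i)$ lie in the same component whenever both are defined; this uses Remark \ref{re:5.6}-type reasoning). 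One checks that the product $\cdot$, the two residuals $\bss$, $\sslash$, and the lattice operations all act coordinatewise with respect to this decomposition — for $\cdot$ because $\lambda^n$ preserves components, and for the residuals because the formulas for $(y\bss x)_i$, $(y\sslash x)_i$ only involve indices in the same component as $i$; the only subtlety is matching the lexicographic level $n$ across factors, which is harmless because every factor records $n$ through which $I_n^{(t)}$ the tuple lies in, and the empty-component factors collapse to $\mathbf Z^-$ consistently. This map is injective and each projection is onto, so it is a subdirect embedding. Hence we may further assume that $I_0$ is a single connected component.

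At this point $\mathbf G$ is subdirectly irreducible and $I_0$ is connected, so by Theorem \ref{th:5.4} the kite $K^\lambda_{I_0,I_1}(\mathbf G)$ is already subdirectly irreducible when $I_1\ne\emptyset$; when $I_1=\emptyset$ we are in Example \ref{sub:3}/\ref{sub:4}, where the kite is again subdirectly irreducible precisely because $\mathbf G$ is. In every case the factor is subdirectly irreducible, and composing the two embeddings above displays the original $K^\lambda_{I_0,I_1}(\mathbf G)$ as a subdirect product of subdirectly irreducible kites. The main obstacle I anticipate is the coordinatewise-decomposition step: one must verify carefully that the component partition of $I_0$ pulls back to a compatible partition of every $I_n$ and of $\lambda^m(I_n)$, and that the piecewise definitions of $\sslash$ and $\bss$ (with their $e$-padding on $I_{n-m}\setminus\lambda^m(I_n)$, resp.\ $I_{m-n}\setminus I_m$) really do respect components — i.e. that no residual ever creates a nonzero entry in a component different from where its inputs lived. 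This is routine but is where all the bookkeeping lies; everything else is a direct appeal to Proposition \ref{pr:5.12}, Theorem \ref{th:5.4}, and the examples.
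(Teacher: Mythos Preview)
Your proposal is correct and follows essentially the same strategy as the paper: reduce $\mathbf G$ to a subdirectly irreducible base via Proposition \ref{pr:5.12}, and reduce the index data to a single connected component, then invoke Theorem \ref{th:5.4}. The only cosmetic difference is that the paper performs these two reductions in the opposite order (components first, then Birkhoff on $\mathbf G$) and phrases the component decomposition via the normal filters $N_{C_0}$ and quotients $K^\lambda_{I_0,I_1}(\mathbf G)/N_{C_0}\cong K^{\lambda_{C_0}}_{C_0,C_1}(\mathbf G)$ rather than via your explicit restriction map; the two descriptions are equivalent, and your bookkeeping concern about the residuals respecting components is exactly what the paper hides in the line ``it is possible to show that $K^\lambda_{I_0,I_1}(\mathbf G)/N_{C_0}$ is isomorphic to $K^{\lambda_{C_0}}_{C_0,C_1}(\mathbf G)$''.
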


\begin{proof}
Let $K^\lambda_{I_0,I_1}(\mathbf G)$ be an arbitrary kite associated with an integral residuated lattice $\mathbf G$. If $\mathbf G$ is trivial, by Example \ref{sub:5}, the kite is isomorphic to the kite $\mathbf Z^-$ which is subdirectly irreducible and the statement is trivially satisfied.

Now let $\mathbf G$ be non-trivial and let $\mathcal I(I_0)$, $\mathcal I(I_1)$ be the set of connected components of $I_0$ and $I_1$, respectively.  For each $C_0 \in \mathcal I(I_1)$, let $C_1=\lambda^{-1}(C_0)$, and let $\lambda_{C_0}:C_1\to C_0$ be the restriction of $\lambda$ onto $C_1$, $C_1 \in \mathcal I(I_1)$. Given $C_0\in \mathcal I(I_0)$, we define the new kite $K^{\lambda_{C_0}}_{C_0,C_1}(\mathbf G)$. In addition, we define the set
$N_{C_0}$ as the set of all elements $\langle x_i\colon i \in I_0\rangle\in G^{I_0}$ such that $i\in C_0$ implies $x_i=e$. Then $N_{C_0}$ is a normal filter of $K^\lambda_{I_0,I_1}(\mathbf G)$, and it is possible to show that $K^\lambda_{I_0,I_1}(\mathbf G)/N_{C_0}$ is isomorphic to $K^{\lambda_{C_0}}_{C_0,C_1}(\mathbf G)$.

As every two distinct connected components of $I_0$ are mutually disjoint, we have $\bigcap\{N_{C_0}\colon C_0\in \mathcal I(I_0)\}=\{1\}$ which proves that $K^\lambda_{I_0,I_1}(\mathbf G)$ is subdirectly embeddable into the product of the system of kites  $\{K^{\lambda_{C_0}}_{C_0,C_1}(\mathbf G)\colon C_0 \in \mathcal I(I_0)\}$.

To finish the proof, we have to show that every $K^{\lambda_{C_0}}_{C_0,C_1}(\mathbf G)$ is a subdirect product of subdirectly irreducible kites.  For $\mathbf G$ there is a system of integral subdirectly irreducible residuated lattices $\{\mathbf G_s\colon s \in S\}$ such that $\mathbf G \le \prod_{s \in S} \mathbf G_s$, which by Proposition \ref{pr:5.12} proves that every $K^{\lambda_{C_0}}_{C_0,C_1}(\mathbf G) \le \prod_{s\in S} K^{\lambda_{C_0}}_{C_0,C_1}(\mathbf G_s)$.  Using the criterion Theorem \ref{th:5.4}, every $K^{\lambda_{C_0}}_{C_0,C_1}(\mathbf G_s)$ for each $s \in S$ is subdirectly irreducible, which establishes the statement.
\end{proof}

The latter theorem implies directly the following result:

\begin{theorem}\label{th:5.14}
The variety $\mathsf K$ of integral residuated lattices generated by all kites is generated by all subdirectly irreducible kites.
\end{theorem}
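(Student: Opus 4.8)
The plan is to obtain Theorem~\ref{th:5.14} as an immediate corollary of Theorem~\ref{th:5.13} together with the fact that every variety is closed under subalgebras and direct products. Write $\mathsf K = \mathsf V(\mathcal K)$ for the variety generated by the class $\mathcal K$ of all kites, and let $\mathsf K' = \mathsf V(\mathcal K')$ be the variety generated by the class $\mathcal K'$ of all subdirectly irreducible kites. The inclusion $\mathsf K' \subseteq \mathsf K$ is trivial: every subdirectly irreducible kite is in particular a kite, so $\mathcal K' \subseteq \mathcal K$, and the variety generated by a smaller class is contained in the variety generated by the larger one.

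For the reverse inclusion it suffices to show $\mathcal K \subseteq \mathsf K'$, since $\mathsf K = \mathsf V(\mathcal K)$ is by definition the least variety containing $\mathcal K$. So fix an arbitrary kite $K^\lambda_{I_0,I_1}(\mathbf G)$. By Theorem~\ref{th:5.13} it is a subdirect product of a family $\{K_s : s\in S\}$ of subdirectly irreducible kites; in particular it embeds as a subalgebra of $\prod_{s\in S} K_s$. Each $K_s$ lies in $\mathcal K' \subseteq \mathsf K'$; since a variety is closed under arbitrary products, $\prod_{s\in S} K_s \in \mathsf K'$; and since a variety is closed under subalgebras, $K^\lambda_{I_0,I_1}(\mathbf G) \in \mathsf K'$. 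Hence $\mathcal K \subseteq \mathsf K'$, which gives $\mathsf K \subseteq \mathsf K'$, and therefore $\mathsf K = \mathsf K'$.

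There is essentially no obstacle here, as the entire substantive content has already been placed in Theorem~\ref{th:5.13}; what remains is only the routine closure of varieties under $\mathbf S$ and $\mathbf P$ (a fragment of Birkhoff's $\mathbf{HSP}$ theorem). The one point worth a brief remark is the degenerate case in which $\mathbf G$ is trivial, but this is already subsumed by Theorem~\ref{th:5.13} (the kite is then isomorphic to $\mathbf Z^-$, which is subdirectly irreducible), so no separate argument is needed.
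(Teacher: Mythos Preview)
Your proof is correct and follows exactly the approach the paper takes: the paper simply states that Theorem~\ref{th:5.14} is a direct consequence of Theorem~\ref{th:5.13}, and you have spelled out the routine details (closure of varieties under $\mathbf S$ and $\mathbf P$) that make this implication work.
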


\section{Infinite-dimensional and Finite-dimensional Kites}

In this section we show that the class of all finite-dimensional kites generates the variety $\mathsf K$ of integral residuated lattices generated by all kites.

A finite-dimensional kite $K^\lambda_{I_0,I_1}(\mathbf G)$ is said to be $n$-{\it dimensional}, if $|I_0|=n$ for some integer $n\ge 0$. We write $\mathcal K_n$ the class of $n$-dimensional kites, and let $\mathsf K_n$ be the variety of integral residuated lattices generated by $\mathcal K_n$.

Our method will be based on embedding every kite from Theorem \ref{th:5.11} into some product of finite-dimensional kites. If $\mathbf G=\{e\}$, then by Example \ref{sub:5}, $K^\lambda_{I_0,I_1}(\mathbf G)\cong \mathbf Z^-$, so the kite $K^\lambda_{I_0,I_1}(\mathbf G)$ belongs to the variety generated by $\mathsf K_1$. Due to Theorems \ref{th:5.11} and \ref{th:5.13}, it is enough to assume that that $\mathbf G$ is non-trivial and $I_0$ is countably infinite.

Let $\mathbf G$ be a non-trivial integral residuated monoid. First we start with embedding the kite $K^\lambda_{\mathbb N,\mathbb N}(\mathbf G)$ with $\lambda(i)=i+1$ into the direct product $\prod_{k=1}^\infty K^{\lambda_k}_{k+1,k}(\mathbf G)$, where $\lambda_k(i)=i+1$. Then on one side, every element $x\in K^\lambda_{\mathbb N,\mathbb N}(\mathbf G)$ is expressible in the form $x = \langle x_i \colon i \in I_n\rangle$, where $I_n=\mathbb N$ for each $n\ge 0$.

On the other hand, each kite of the form $K^{\lambda_k}_{k+1,k}(\mathbf G)$
for $k\ge 1$  can be characterized by the sequence of subsets $\{I^k_n\colon n \ge 0\}$, where $I^k_n=\{0,\ldots, k-n\}$ for $n=0,\ldots,k$ and $I^k_n=\emptyset $ for $n>k$, and with an injective mapping $\lambda_k:I^k_1=\{0,\ldots,k-1\}\to I^k_0=\{0,\ldots,k\}$ defined $\lambda_k(i)=i+1$, $i=0,\ldots,k-1$.

Hence, we characterize an element $x \in \prod_{k=1}^\infty K^{\lambda_k}_{k+1,k} (\mathbf G)$  by a sequence $\langle\langle x^k_i\colon i \in I^k_{m_k}\rangle\colon k\ge 1\rangle $, where $x^k_i \in G$. Define a mapping $\phi_1: K^\lambda_{\mathbb N,\mathbb N}(\mathbf G) \to \prod_{k=1}^\infty K^{\lambda_k}_{k+1,k} (\mathbf G)$ as follows
$$
\phi_1(\langle x_i\colon i \in I_m\rangle):=\langle \langle x_i\colon i \in I^k_m\rangle \colon k \ge 1\rangle, \eqno(6.1)
$$
where if $I^k_m =\emptyset$, we put as before, $\langle x_i\colon i \in \emptyset \rangle :=\langle e\colon i \in \emptyset \rangle$.
Then

\begin{eqnarray*}
& &\phi_1(\langle x_i\colon i \in I_m\rangle \cdot \langle y_i\colon i \in I_n\rangle ) = \phi_1( \langle x_{i+n}y_i\colon i \in I_{m+n}\rangle)\\
& & = \langle \langle x_{i+n}y_i\colon i \in I^k_{n+m}\rangle\colon k\ge 1\rangle,\\
& &\phi_1(\langle x_i\colon i \in I_m\rangle) \cdot \phi_1(\langle y_i\colon i \in I_n\rangle ) = \langle \langle x_i \colon i \in I^k_m\rangle \colon k \ge 1\rangle \cdot \langle \langle y_i \colon i \in I^k_n\rangle \colon k \ge 1\rangle\\
& & = \langle \langle x_{i+n}y_i \colon i \in I^k_{m+n}\rangle \colon k \ge 1\rangle,
\end{eqnarray*}
so that $\phi_1$ preserves product, and $\phi_1(1)=\phi(\langle e: i \in I_0\rangle)=\langle \langle e:i\in I^k_0\rangle \colon k\ge 1\rangle $.


For $m\le n$, we have  $\langle y_i\colon i \in I_n\rangle \sslash \langle x_i\colon i \in I_m\rangle =\langle (y \sslash x)_i \colon i \in I_{n-m}\rangle$, where

\begin{align*}
(y\sslash x)_i&= \begin{cases}e & \text{ if } 0\le i <m\\
y_{i-m}\rd x_{i-m} & \text{ if } m\le i,
\end{cases}
\text{ for } i \in I_{n-m}=\mathbb N,
\end{align*}
i.e. $\langle (y \sslash x)_i \colon i \in I_{n-m}\rangle =\langle e,\ldots,e,y_{-m}\rd x_{-m}, y_{1-m}\rd x_{1-m},\ldots, y_{i-m}\rd x_{i-m},\ldots \rangle$, where $e$'s we have $m$-times if $m>0$, otherwise, there is no $e$.

For the product we have $\langle x^k_i\colon i \in I^k_m \rangle \cdot \langle y^k_i\colon i \in I^k_n\rangle =\langle x^k_ny^k_0, \ldots, x^k_{k-m}y^k_{k-m-n}\rangle$ if $ k\ge m+n$.

In addition, for each integer $k\ge 1$, we have $\langle y_i\colon i \in I^k_n\rangle \sslash \langle x_i\colon i \in I^k_m\rangle =\langle (y \sslash x)^k_i \colon i \in I^k_{n-m}\rangle$, where

\begin{align*}
(y\sslash x)^k_i&= \begin{cases}
e & \text{ if } 0\le i <m\\
y_{i-m}\rd x_{i-m} & \text{ if } m\le i \le k-n+m,
\end{cases}
\end{align*}
which entails $\phi(\langle y_i\colon i \in I_n\rangle \sslash \langle x_i\colon i \in I_m\rangle) =\phi_1(\langle y_i\colon i \in I_n\rangle) \sslash \phi_1(\langle x_i\colon i \in I_m\rangle)$.

Similarly, for $n\le m$, we have $\langle y_i\colon i \in I_n\rangle \bss \langle x_i\colon i \in I_m\rangle =\langle (y \bss x)_i \colon i \in I_{m-n}\rangle=\langle y_{m-n}\ld x_0, y_{1+m-n}\ld x_1,\ldots, y_{i+m-n}\ld x_i,\ldots\rangle$, and for each integer $k\ge 1$, we have $I^k_{m-n}=\{0,\ldots, k-m+n\}$ if $k\le m-n$, $I^k_{m-n}=\emptyset$ if $m-n<k$, and
$\langle y_i\colon i \in I^k_n\rangle \bss \langle x_i\colon i \in I^k_m\rangle =\langle (y \bss x)^k_i \colon i \in I^k_{m-n}\rangle$, where
\begin{align*}
&  (y \bss x)^k_i  =\begin{cases} y_{i+m-n}\ld x_i& \text{if } i \in I^k_m=\{0,\ldots,k-m\}\\
 e& \text{if } i\in I^k_{m-n}\setminus I^k_m=\{k-m+1,\ldots,k-m+n\}
\end{cases}
\end{align*}
if $m \le k$ and $(y \bss x)_i=e$ if $k<m$ and $i\in I^k_{m-n}=\emptyset$. Hence, for each $k>m\ge n$,  $\langle (y \bss x)^k_i\colon i \in I^k_m\rangle =\langle  y_{m-n}\ld x_0,\ldots, y_{i+m-n}\ld x_i, \ldots, y_{k-n}\ld x_{k-m},e,\ldots,e\rangle$, where $e$ is at the end of the sequence $n$-times. Consequently $\phi_1$ does not preserves $\bss$ and $\phi_1$ is no embedding. In what follows, we introduce a congruence $\approx$ such that $\phi_1/\approx$ will be an embedding.

Let $K_1$ be the subset of $\prod_{k=1}^\infty K^{\lambda_k}_{k+1,k} (\mathbf G)$ consisting of elements of the form $\langle\langle x^k_i\colon i\in I^k_m\rangle \colon k\ge 1\rangle$, where $m\ge 0$. From the above calculation, we see that $K_1$ is an integral residuated subalgebra of the direct product $\prod_{k=1}^\infty K^{\lambda_k}_{k+1,k} (\mathbf G)$.

We define a relation $\approx$ between  elements of $K_1$ as follows
$x=\langle\langle x^k_i\colon i\in I^k_m\rangle \colon k\ge 1\rangle \approx y=\langle\langle y^k_i\colon i\in I^k_n\rangle \colon k\ge 1\rangle$ iff $m=n$ and there exist integers $k_0\ge m$ and $d$ with $0\le d\le k_0-m$ such that for each $k\ge k_0$,
$x^k_i=y^k_i$ for $i=0,\ldots, k-m-d$.

\begin{proposition}\label{pr:6.1}
The relation $\approx$ is a congruence on $K_1$.
\end{proposition}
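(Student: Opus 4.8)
The plan is to verify in turn that $\approx$ is an equivalence relation and that it is compatible with each of the five operations of $K_1$. Reflexivity is witnessed by $k_0=m$ and $d=0$; symmetry is immediate since the defining condition $x^k_i=y^k_i$ is symmetric; and for transitivity, if $x\approx y$ is witnessed by $(k_0,d)$ and $y\approx z$ by $(k_0',d')$ (the three elements then share the level $m$), the pair $(\max\{k_0,k_0'\},\max\{d,d'\})$ witnesses $x\approx z$. The one fact that makes the remaining verifications mechanical is a \emph{monotonicity} observation: if $x\approx y$ is witnessed by $(k_0,d)$, then it is witnessed by any $(k_0',d')$ with $k_0'\ge k_0$ and $d\le d'\le k_0'-m$. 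Hence, given two $\approx$-related pairs, one may always pass to a common witness level $\kappa$, and afterwards enlarge $\kappa$ as much as needed.

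Next I would record the coordinatewise formulas for the operations of $\prod_{k\ge 1}K^{\lambda_k}_{k+1,k}(\mathbf G)$ on elements of $K_1$, using that $\lambda_k(i)=i+1$, so $\lambda_k^{\pm t}(i)=i\pm t$. For $x$ of level $m$, $y$ of level $n$, and $k$ large: $(x\cdot y)^k_i=x^k_{i+n}y^k_i$ on $I^k_{m+n}=\{0,\dots,k-m-n\}$; if $m\le n$ then $(y\sslash x)^k_i=e$ for $0\le i<m$ and $(y\sslash x)^k_i=y^k_{i-m}\rd x^k_{i-m}$ for $m\le i\le k-n+m$, while $y\sslash x=1$ if $m>n$; if $n\le m$ then $(y\bss x)^k_i=y^k_{i+m-n}\ld x^k_i$ for $0\le i\le k-m$ and $(y\bss x)^k_i=e$ for $k-m<i\le k-m+n$, while $y\bss x=1$ if $n>m$. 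The structural point is that each operation shifts indices by an amount bounded in terms of $m,n$ only and truncates the range of $i$ by a bounded amount; that is exactly what lets ``eventual agreement on an initial segment of bounded complement'' survive an operation.

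For the compatibility checks, fix $a\approx a'$ and $b\approx b'$ with a common witness level $\kappa$ and pair deficiencies $p$ and $q$. For $\wedge$ and $\vee$: if $a$ and $b$ are at the same level, the operation is coordinatewise and $(\kappa,\max\{p,q\})$ witnesses $a\wedge b\approx a'\wedge b'$ and $a\vee b\approx a'\vee b'$; if the levels differ, say $a$ at the strictly lower one, then $a\wedge b=a$, $a'\wedge b'=a'$, $a\vee b=b$, $a'\vee b'=b'$, and the claim reduces to $a\approx a'$, $b\approx b'$. For $\cdot$ (with $a$ of level $m$, $b$ of level $n$): in coordinate $k$ the products agree for all $i$ with $i+n\le k-m-p$ and $i\le k-n-q$, giving $a\cdot b\approx a'\cdot b'$ at level $m+n$ with deficiency $\max\{p,\,q-m\}$. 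For $\sslash$ with $m\le n$: the front block $0\le i<m$ is $e$ on both sides, the computed block agrees once $i-m\le k-m-p$ and $i-m\le k-n-q$, so the output sits at level $n-m$ with deficiency $\max\{q,\,p-(n-m),\,0\}$; if $m>n$ both sides are $1$. For $\bss$ with $n\le m$: the computed block $0\le i\le k-m$ agrees once $i\le k-m-p$ and $i+m-n\le k-n-q$, i.e.\ for $i\le k-m-\max\{p,q\}$, the trailing block $k-m<i\le k-m+n$ is $e$ on both sides, and the output sits at level $m-n$ with deficiency $n+\max\{p,q\}$; if $n>m$ both sides are $1$. In each branch one finishes by enlarging $\kappa$ to a $\kappa'$ with the (finite) asserted deficiency not exceeding $\kappa'$ minus the output level, which is always possible.

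I expect the only genuinely delicate part to be the $\sslash$ and $\bss$ branches. There the index shift is composed with the change of level, and the portion of the output not lying in the shifted domain is frozen to $e$ on both sides: for $\sslash$ that $e$-block sits at the front, so it is harmlessly absorbed into the agreeing initial segment, whereas for $\bss$ it sits at the back, beyond a possible disagreement region, so it does \emph{not} extend the agreeing initial segment — which is precisely why the $\bss$-deficiency carries the extra additive term $n$. Checking that in each of these cases the claimed agreeing set really is an initial interval of the required length, and that the auxiliary $e$-blocks land where stated, is the crux; the rest of the argument is routine index bookkeeping.
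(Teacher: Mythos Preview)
Your proposal is correct and follows essentially the same approach as the paper's proof: a direct verification that $\approx$ is an equivalence relation compatible with each operation, by explicit index bookkeeping in each case. Your presentation is somewhat cleaner---you isolate the monotonicity of witnesses as a reusable observation, you treat $\wedge$ and $\vee$ explicitly (the paper omits them), and your deficiency bounds (e.g.\ $n+\max\{p,q\}$ for $\bss$) are sharper than the paper's looser choices---but the method is the same.
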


\begin{proof}
Reflexivity and symmetry are obvious. To prove transitivity, suppose $x= \langle\langle x^k_i\colon i\in I^k_m\rangle \colon k\ge 1\rangle\approx  y= \langle\langle y^k_i\colon i\in I^k_m\rangle \colon k\ge 1\rangle\approx z=\langle\langle z^k_i\colon i\in I^k_m\rangle \colon k\ge 1\rangle$.
By definition, there are $k_1, k_2\ge m$, $0\le d_1\le k_1-m$, $d_2\le k_2-m$  such that for each $k\ge k_0=\max\{k_1,k_2\}$, $x^k_i=y^k_i$ for $i=0,\ldots,k-m-d_1$ and $y_i=z_i$ for $i=0,\ldots, k-m-d_2$. If we put $d=\max\{d_1,d_2\}$, we have $x^k_i=z^k_i$ for $i=0,\ldots,k-m-d$ and transitivity of $\approx$ is established.

Now let $x=\langle\langle x^k_i\colon i\in I^k_m\rangle \colon k\ge 1\rangle \approx y = \langle\langle y^k_i\colon i\in I^k_m\rangle \colon k\ge 1\rangle$ and $u = \langle\langle u^k_i\colon i\in I^k_n\rangle \colon k\ge 1\rangle \approx v= \langle\langle v^k_i\colon i\in I^k_n\rangle \colon k\ge 1\rangle$. There are $k_1 \ge m$, $k_2\ge n$, $d_1,d_2$ with $0\le d_1 \le k_1-m$, $0\le d_2 \le k_2-n$ such that for each $k\ge k_1$, $x^k_i=y^k_i$ for $i=0,\ldots, k-m-d_1$ and for each $k\ge k_2$,  $u^k_i=v^k_i$ for $i=0,\ldots, k-n-d_2$.

$x\cdot u\approx y\cdot v$:
Let $d=\max\{d_1,d_2\}$ and let $k\ge k_0:= \max\{k_1,k_2\}+m+n+d$. Then $k-m-n\ge \max\{k_1,k_2\}+m+n+d -m-n = \max\{k_1,k_2\}+d >n$, and $k-m-n-d\le k-n-d_1$, $k-m-n-d\le k-m-d_2$, so that $x^k_i=y^k_i$ for $i=n,\ldots, k-m-n-d$, and $u^k_i=v^k_i$ for $i=0,\ldots, k-m-n-d$. So that $x^k_{n+i}u^k_i= y^k_{n+i}v^k_i$ for $i=0,\ldots,k-m-n-d$, i.e. $x\cdot u\approx y\cdot v$.

To establish that $\approx$ preserves divisions, assume first $\sslash$ and $m\le n$. Then for $u\sslash x =\langle (u^k\sslash x^k)_i\colon I^k_{n-m}\rangle$ and $v\sslash y =\langle (v^k\sslash y^k)_i\colon I^k_{n-m}\rangle$, where

\begin{align*}
(u^k\sslash x^k)_i&= \begin{cases}
e & \text{ if } 0\le i <m\\
u^k_{i-m}\rd x^k_{i-m} & \text{ if } m\le i \le k-n+m,
\end{cases}
\end{align*}
and
\begin{align*}
(v^k\sslash y^k)_i&= \begin{cases}
e & \text{ if } 0\le i <m\\
v^k_{i-m}\rd y^k_{i-m} & \text{ if } m\le i \le k-n+m.
\end{cases}
\end{align*}

If we take $k\ge k_0:=\max\{k_1,k_2\} +m+n-d$, then $k-n+m-d\ge \max\{k_1,k_2\} +m +n-d+m- n- d= \max\{k_1,k_2\}+2m\ge m$, so that $x^k_{i-m}=y^k_{i-m}$  and $u^k_{i-m}=v^k_{i-m}$ for $i=m,\ldots,k-n+m-d$ which yields $u^k_{i-m}\rd x^k_{i-m}= v^k_{i-m}\rd y^k_{i-m}$ for $i=m,\ldots, k-n+m-d$. Consequently, $(u^k\sslash x^k)_i = (v^k\sslash y^k)_i$ for $i=0,\ldots, k-n+m-d$, and $u\sslash x \approx v\sslash y$.

Now we establish that $\approx$ preserves $\bss$. So let $n\le m$. Then $u\bss x=\langle (u^k\bss x^k)_i\colon i \in I^k_{m-n}\rangle$ and $v\bss y=\langle (v^k\bss y^k)_i\colon i \in I^k_{m-n}\rangle$, where

\begin{align*}
&  (u^k \bss x^k)_i  =\begin{cases} u^k_{i+m-n}\ld x^k_i& \text{if } i \in \{0,\ldots,k-m\}\\
 e& \text{if } i\in \{k-m+1,\ldots,k-m+n\}
\end{cases}
\end{align*}
if $m \le k$ and $(u \bss x)_i=e$ if $k<m$ and $i\in I^k_{m-n}=\emptyset$. Hence, for each $k>m\ge n$,  $\langle (u \bss x)^k_i\colon i \in I^k_{m-n}\rangle =\langle  u^k_{m-n}\ld x^k_0,\ldots, u^k_{i+m-n}\ld x^k_i, \ldots, u^k_{k-n}\ld x^k_{k-m},e,\ldots,e\rangle$, where $e$ is at the end of the sequence $n$-times. Similarly,  for each $k>m\ge n$,  $\langle (v \bss y)_i\colon i \in I^k_{m-n}\rangle= \langle  v^k_{m-n}\ld y^k_0,\ldots, v^k_{i+m-n}\ld y^k_i, \ldots, v^k_{k-n}\ld y^k_{k-m},e,\ldots,e\rangle$.

Set $d=\max\{d_1,d_2,n\}$ and let $k_0$ be an integer such that $k_0\ge \max\{k_1,k_2, 2(m-n) + d\}$. Then for $k \ge k_0$, we have $k-m+n-d\ge 2(m-n)+d -m+n-d=m-n$, so that if $i=0,\ldots,k-m+n-d$, then $u^k_{i+m-n}=v^k_{i+m-n}$ as well for $i=0,\ldots, k-m-d$. Hence, $x^k_i=y^k_i$ for $i=0,\ldots,k-m-d$ which yields, $u^k_{i+m-n}\ld x^k_i= v^k_{i+m-n}\ld y^k_i$ for $i=0,\ldots, k-m-d$. Finally, $(u^k \bss x^k)_i = (v^k \bss y^k)_i$ for $i=0,\ldots, k-m+n-(d+n)$, i.e. $u \bss x \approx v \bss y$.

Summarizing all the above cases, we see that $\approx$ is a congruence of $K_1$.
\end{proof}

\begin{proposition}\label{pr:6.2}
Let $\Phi_1: K^\lambda_{I_0,I_1}(\mathbf G) \mapsto \phi_1(K^\lambda_{I_0,I_1}(\mathbf G))/\approx$. Then $\Phi_1$ is an embedding of $K^\lambda_{I_0,I_1}(\mathbf G)$ into $K_1/\approx$.
\end{proposition}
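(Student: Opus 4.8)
The plan is to realise $\Phi_1$ as the composite $q_{\approx}\circ\phi_1$, where $\phi_1$ is the map of $(6.1)$, which takes its values in the subalgebra $K_1\subseteq\prod_{k=1}^\infty K^{\lambda_k}_{k+1,k}(\mathbf G)$, and $q_{\approx}\colon K_1\to K_1/\approx$ is the canonical projection, a homomorphism of integral residuated lattices by Proposition~\ref{pr:6.1}. From the computations carried out just before Proposition~\ref{pr:6.1} we already know that $\phi_1$ is compatible with $\cdot$, with the top element $1$, and with $\sslash$; to these I would add the routine observation that $\phi_1$ respects $\wedge$ and $\vee$ as well, since it acts coordinatewise and preserves the level $m$ of an element $\langle x_i\colon i\in I_m\rangle$ (so in each coordinate it is a within-level componentwise meet or join, or merely a comparison of levels). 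Composing with $q_{\approx}$, it then suffices to establish two things: that $\Phi_1$ is compatible with $\bss$, and that $\Phi_1$ is injective.

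For compatibility with $\bss$, given $\langle x_i\colon i\in I_m\rangle$ and $\langle y_i\colon i\in I_n\rangle$, the case $n>m$ is immediate: there $\langle y_i\rangle\bss\langle x_i\rangle=1$ and, in every coordinate $k$, $\phi_1(\langle y_i\rangle)\bss\phi_1(\langle x_i\rangle)$ is the top element, so both sides equal $\phi_1(1)$ on the nose. For $n\le m$, put $\ell=m-n$ and invoke the two sequences already displayed before Proposition~\ref{pr:6.1}: both $\phi_1(\langle y_i\rangle\bss\langle x_i\rangle)$ and $\phi_1(\langle y_i\rangle)\bss\phi_1(\langle x_i\rangle)$ are elements of $K_1$ at level $\ell$ whose $k$-th components (for $k>m$) agree on the indices $i=0,\dots,k-m$ and differ only in that, on the remaining $n$ indices of $I^k_\ell=\{0,\dots,k-m+n\}$, the first carries the genuine entries $y_{i+m-n}\ld x_i$ while the second carries $e$. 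This is exactly the assertion that the two elements are $\approx$-related: taking any $k_0>m$ and the shift $d=n$, one has $k_0>m\ge\ell$ and $0\le d=n<k_0-\ell$, and the two $k$-th components agree on $i=0,\dots,k-m=k-\ell-d$ for all $k\ge k_0$. Applying $q_{\approx}$ then yields $\Phi_1(\langle y_i\rangle\bss\langle x_i\rangle)=\Phi_1(\langle y_i\rangle)\bss\Phi_1(\langle x_i\rangle)$, so $\Phi_1$ is a homomorphism of integral residuated lattices, with image the subalgebra $\phi_1(K^\lambda_{\mathbb N,\mathbb N}(\mathbf G))/\approx$ of $K_1/\approx$.

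For injectivity I would argue directly: if $\Phi_1(\langle x_i\colon i\in I_m\rangle)=\Phi_1(\langle y_i\colon i\in I_n\rangle)$, then $\phi_1(\langle x_i\rangle)\approx\phi_1(\langle y_i\rangle)$, and the definition of $\approx$ forces first $m=n$ and then the existence of $k_0$ and $d$ with $x_i=y_i$ for $i=0,\dots,k-m-d$ for every $k\ge k_0$; letting $k\to\infty$ and using $I_m=I_n=\mathbb N$ gives $x_i=y_i$ for all $i$, so the two elements coincide. I expect the main obstacle to be the bookkeeping in the $\bss$ step: one must pin down precisely the $n$ tail-entries that the kite-level $\bss$ retains but the componentwise $\bss$ in each $K^{\lambda_k}_{k+1,k}(\mathbf G)$ discards, match this to the shift $d=n$ permitted by $\approx$, and verify that $d$ satisfies the constraint $0\le d\le k_0-\ell$; once Proposition~\ref{pr:6.1} is in hand, everything else is formal.
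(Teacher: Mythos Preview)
Your proposal is correct and follows essentially the same route as the paper: the heart of both arguments is the observation that $\phi_1(y\bss x)$ and $\phi_1(y)\bss\phi_1(x)$ differ, in each coordinate $k>m$, only on the last $n$ indices of $I^k_{m-n}$, so taking the shift $d=n$ exhibits them as $\approx$-related; injectivity is then handled by letting $k\to\infty$. Your write-up is in fact slightly tidier than the paper's---you explicitly treat the trivial case $n>m$, note the preservation of $\wedge,\vee$, and phrase injectivity via $k\to\infty$ rather than the paper's somewhat abrupt choice of $k_0=m$, $d=0$.
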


\begin{proof}
As we have seen above, $\phi_1$ preserves product and $\sslash$. Now let $n\le m$ and let $x =\langle x_i\colon i \in I_m\rangle$ and $y= \langle y_i\colon i \in I_n\rangle$. Then  $\langle y_i\colon i \in I_n\rangle \bss \langle x_i\colon i \in I_m\rangle =\langle (y \bss x)_i \colon i \in I_{m-n}\rangle=\langle y_{m-n}\ld x_0, y_{1+m-n}\ld x_1,\ldots, y_{i+m-n}\ld x_i,\ldots\rangle$, and $\phi_1(x)=\langle \langle x^k_i\colon i \in I^k_m\rangle \colon k\ge 1\rangle$, $\phi_1(y)=\langle \langle y^k_i\colon i \in I^k_n\rangle \colon k\ge 1\rangle$, $\phi_1(y\bss x)= \langle \langle z^k_i\colon i \in I^k_{m-n}\rangle\colon k\ge 1\rangle$, where $x^k_i= y^k_i =z^k_i= e$ if $k<m-n$ and $x^k_i=x_i$, $y^k_i=y_i$, $z^k_i=y_{i+m-n}\ld x_i$ if $m-n\le k$.

On the other hand, for $\langle (y\bss x)_i\colon i \in I^k_{m-n}\rangle$,
we have
\begin{align*}
&  (y \bss x)_i  =\begin{cases} y_{i+m-n}\ld x_i& \text{if } i \in I^k_m=\{0,\ldots,k-m\}\\
 e& \text{if } i\in I^k_{m-n}\setminus I^k_m=\{k-m+1,\ldots,k-m+n\}
\end{cases}
\end{align*}
if $m \le k$ and $(y \bss x)_i=e$ if $k<m$ and $i\in I^k_{m-n}=\emptyset$. Hence, for each $k>m\ge n$,  $\langle (y\bss  x)_i\colon i \in I^k_{m-m}\rangle =\langle  y_{m-n}\ld x_0,\ldots, y_{i+m-n}\ld x_i, \ldots, y_{k-n}\ld x_{k-m},e,\ldots,e\rangle$, where $e$ is at the end of the sequence $n$-times.

If $m-n\le k$, then $\langle z^k_i\colon i\in I^k_{m-n}\rangle = \langle  y_{m-n}\ld x_0,\ldots, y_{i+m-n}\ld x_i, \ldots, y_{k}\ld x_{k-m+n}\rangle$.
Comparing the latter two vectors, we see that if $d=n$, and $k\ge k_0=m+1$, then $z^k_i= y_{i+m-n}$ for $i=0,\ldots, k-m+n-d$, i.e. $\phi_1(y\bss x)\approx \phi_1(y)\bss \phi_1(x)$ and $\Phi_1(y\bss x)=\Phi_1(y)\bss \Phi_1(x)$.

We have established that $\Phi_1$ is a homomorphism. We claim that $\Phi_1$ is injective. Let $\Phi_1(x)=\Phi_1(y)$, where $x=\langle x_i\colon i\in I_m\rangle$ and $y= \langle y_i\colon i\in I_n\rangle$.
Then $m=n$,  and $\phi_1(x)=\langle x^k_i\colon i \in I^k_m\rangle$ and $\phi_1(y) = \langle y^k_i\colon i \in I^k_m\rangle$ where $x^k_i= y^k_i= e$ if $k<m-n$ and $x^k_i=x_i$, $y^k_i=y_i$  if $m-n\le k$.
If $k\ge k_0=m$ and $d=0$, then $x_i=x^k_i=y^k_i=y_i$. Hence, $x_i=y_i$ for each $i\ge 0$ and $x=y$.
\end{proof}

Now we take a kite  of the form $K^\lambda_{\mathbb Z, \mathbb Z}(\mathbf G)$ with $\lambda(i)=i+1$ with $I_0=\mathbb Z=I_1$, where $\mathbf G$ is a non-trivial integral residuated lattice. Then $I_m=\mathbb Z$ for every $m\ge 0$. For each integer $k\ge 0$, let $I^k_0$ be the $2k+1$-element set $\mathbb Z/(2k+1)\mathbb Z$ which is the additive group. We represent the set $I^k_0$ as $I^k_0=\{-k,-k+1,\ldots,-1,0,1, \ldots, k-1,k\}$ and we set $I^k_1=I^k_0$ with $\lambda_k(i)=i+1 (\mbox{mod } 2k+1)$. The labeling of elements from this $2k+1$-element sets are counted as the additive group $\mathbb Z/(2k+1)\mathbb Z$.  Then $I^k_m=I^k_0$ for each $m\ge 1$. We set $K^{\lambda_k}_k(\mathbf G):= K^{\lambda_k}_{I^k_0,I^k_1}(\mathbf G)$ for each $k\ge 1$. Define a mapping
$\phi_2: K^\lambda_{\mathbb Z, \mathbb Z}(\mathbf G) \to \prod_{k=0}^\infty K^{\lambda_k}_k(\mathbf G)$ by
$$\phi_2(\langle x_i\colon i \in I_m\rangle)=\langle \langle x_{\lambda_k(i)}\colon i \in I^k_m\rangle\colon k\ge 0\rangle.
\eqno(6.2)
$$

Let $K_2$ be the subset of $\prod_{k=0}^\infty K^{\lambda_k}_{k} (\mathbf G)$ consisting of elements of the form $\langle\langle x_i\colon i\in I^k_m\rangle \colon k\ge 1\rangle$, where $m\ge 0$. Then $K_2$ is a subalgebra of the product $\prod_{k=0}^\infty K^{\lambda_k}_{k} (\mathbf G)$.

\begin{proposition}\label{pr:6.3}
The mapping $\phi_2$ is an embedding of $K^\lambda_{\mathbb Z, \mathbb Z}(\mathbf G)$ into $K_2$.
\end{proposition}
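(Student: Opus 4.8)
The plan is to check that $\phi_2$ is a homomorphism of integral residuated lattices and then that it is injective. That $\phi_2$ actually takes values in $K_2$ is immediate: if $x=\langle x_i\colon i\in I_m\rangle$ sits at level $m$, then every component of $\phi_2(x)$ sits at the same level $m$, which is precisely the form of the elements of $K_2$. Preservation of the lexicographic order — hence of $\wedge$ and $\vee$ — is equally immediate, since in $K^\lambda_{\mathbb Z,\mathbb Z}(\mathbf G)$ and in every factor $K^{\lambda_k}_k(\mathbf G)$ the order is ``higher level first, then componentwise'', and clearly $\phi_2(1)$ is the top element of $K_2$.

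The heart of the proof is to verify that $\phi_2$ intertwines $\cdot$, $\sslash$ and $\bss$. As in the proofs of Propositions \ref{pr:6.1} and \ref{pr:6.2}, this is a direct coordinatewise computation carried out separately in each factor $K^{\lambda_k}_k(\mathbf G)$: one writes $\phi_2(x)\cdot\phi_2(y)$, $\phi_2(y)\sslash\phi_2(x)$ and $\phi_2(y)\bss\phi_2(x)$ out using the formulas for $\cdot$, $\sslash$, $\bss$ in the cyclic kite together with the relabelling $\lambda_k$, and compares them with $\phi_2(x\cdot y)$, $\phi_2(y\sslash x)$ and $\phi_2(y\bss x)$. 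The delicate point is the index arithmetic: in $K^{\lambda_k}_k(\mathbf G)$ the iterated map $\lambda_k^n$ is the shift by $n$ in $\mathbb Z/(2k+1)\mathbb Z$, whereas in $K^\lambda_{\mathbb Z,\mathbb Z}(\mathbf G)$ it is the shift by $n$ in $\mathbb Z$, so one must keep careful track of the ``seam'' of each cyclic component. This is exactly the place where the analogous map $\phi_1$ failed to preserve $\bss$ and forced the congruence $\approx$ of Proposition \ref{pr:6.1}; the point in the present cyclic situation is that every $I^k_m=I^k_0$ is non-empty — the cyclic kites have no tail — so the boundary phenomenon obstructing $\phi_1$ does not arise, and one should check that the three operations are matched exactly (if the seam discrepancies did not cancel, they would have to be absorbed by the evident analogue of $\approx$ on $K_2$, just as in the $\phi_1$ case). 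I expect the verification for $\bss$ to be the main obstacle and to require the most careful index chasing.

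Finally, injectivity is the clean part, via a ``window'' argument. Suppose $\phi_2(x)=\phi_2(y)$ with $x=\langle x_i\colon i\in I_m\rangle$ and $y=\langle y_i\colon i\in I_n\rangle$. Reading off the level of any component of $\phi_2(x)$ forces $m=n$. Now fix any $i\in\mathbb Z$ and pick $k$ with $|i|\le k$, so that (after the relabelling $\lambda_k$, which is a bijection of $I^k_0$) the coordinate $x_i$ appears among the coordinates of the $k$-th component of $\phi_2(x)$, and $y_i$ appears at the same position of the $k$-th component of $\phi_2(y)$; comparing the $k$-th components then gives $x_i=y_i$. As $i\in\mathbb Z$ was arbitrary, $x=y$, and hence $\phi_2$ is an embedding of $K^\lambda_{\mathbb Z,\mathbb Z}(\mathbf G)$ into $K_2$.
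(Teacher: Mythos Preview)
Your overall plan coincides with the paper's: check that $\phi_2$ lands in $K_2$, preserve the lattice structure and $1$ by the level argument, verify preservation of $\cdot$, $\bss$, $\sslash$ by direct componentwise computation in each cyclic factor $K^{\lambda_k}_k(\mathbf G)$, and finish with a window argument for injectivity. The paper does exactly this, writing out $\phi_2(x)\cdot\phi_2(y)$, $\phi_2(y)\bss\phi_2(x)$, $\phi_2(y)\sslash\phi_2(x)$ and identifying them with $\phi_2(x\cdot y)$, $\phi_2(y\bss x)$, $\phi_2(y\sslash x)$; it then dismisses injectivity as ``straightforward''.

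Where your proposal falls short of the paper's proof is that it stays at the level of a plan and never actually performs the three verifications. You correctly isolate the structural reason things should work---each $\lambda_k$ is a bijection of $I^k_0$, so $I^k_m=I^k_0$ and $\lambda_k^m(I^k_n)=I^k_0$ for all $m,n$, and hence the ``$e$-padding'' branches in the definitions of $\sslash$ and $\bss$ never fire in the cyclic kites---but you then undercut this by hedging that ``if the seam discrepancies did not cancel, they would have to be absorbed by the evident analogue of $\approx$ on $K_2$''. That hedge is a genuine gap: the proposition asserts an embedding into $K_2$, not into a quotient $K_2/\!\approx$, so invoking a congruence would prove a different, weaker statement. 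The paper commits to the direct match and computes; you must do the same. Saying ``I expect the verification for $\bss$ to be the main obstacle and to require the most careful index chasing'' is an announcement, not a proof.

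Your injectivity paragraph is fine and matches the paper's ``straightforward'' remark.
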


\begin{proof}
Let $x = \langle x_i \colon i \in I_m\rangle$ and $y = \langle y_i \colon i \in I_n\rangle$. Then $\phi_2(x)= \langle \langle x_{\lambda_k(i)}\colon i \in I^k_m\rangle \colon k\ge 0\rangle$ and $\phi_2(y)= \langle \langle y_{\lambda_k(i)}\colon i \in I^k_n\rangle \colon k\ge 0\rangle$.

Take product $x\cdot y = \langle x_{\lambda_k^n(i)}y_i\colon i \in I_{m+n}\rangle$. Then $\phi_2(x)\cdot \phi_2(y) = \langle \langle x_{\lambda^n_k(i)}y_i\colon i \in I^k_{m+n}\rangle \colon k\ge 0\rangle = \phi_2(x\cdot y)$.

If  $n\le m$, then $y \bss x=\langle y_{i+m-n}\colon i \in I_{m-n}$, and
$\phi_2(y)\bss \phi_2(x)=\langle \langle y_{\lambda_k(i)}\colon i \in I^k_n\rangle \colon k\ge 0\rangle \bss \langle \langle x_{\lambda_k(i)}\colon i \in I^k_m\rangle \colon k\ge 0\rangle =\langle \langle y_{\lambda_k^{m-n}(i)}\ld x_i \colon i \in I^k_{m-n}\rangle \colon k\ge 0\rangle= \phi_2(y \bss x)$.

If $n\ge m$, then $y\sslash x =\langle y_{i-m}\rd x_{i-m}\colon i \in I_{n-m}\rangle \colon k\ge 0\rangle$, and $\phi_2(y) \sslash \phi_2(x)=
\langle \langle y_{\lambda_k(i)}\colon i \in I^k_n\rangle \colon k\ge 0\rangle \sslash \langle \langle x_{\lambda_k(i)}\colon i \in I^k_m\rangle \colon k\ge 0\rangle =\langle \langle y_{\lambda_k^{-m}(i)}\rd x_{\lambda_k^{-m}(i)} \colon i \in I^k_{n-m}\rangle \colon k\ge 0\rangle= \phi_2(y \sslash x)$.

We have proved that $\phi_2$ is a homomorphism. It is straightforward to see that $\phi_2$ is injective, which proves the proposition.
\end{proof}

It remains the last case, the infinite kite $K^\lambda_{\mathbb N, \mathbb N\setminus\{0\}}(\mathbf G)$ with $\lambda(i)=i-1$. Then $I_0=\{0,1,\ldots, \}$,  $I_1=\{1,2,\ldots\}$ and $I_n=\{n,n+1,\ldots\}$. For any integer $k\ge 1$, we set $I^k_0=\{0,\ldots,k\}$, $I^k_1=\{1,\ldots,k\}$ with $\lambda_k(i)=i-1$. Then $I^k_m=\{m,\ldots,k\}$ if $m\le k$, otherwise, $I^k_m=\emptyset$. Let $K_{k,k-1}^{\lambda_k}(\mathbf G):= K_{I^k_0,I^k_1}^{\lambda_k}(\mathbf G)$ for each $k\ge 1$, and define the direct product $\prod_{k=1}^\infty K_{k,k-1}^{\lambda_k}(\mathbf G)$.

Choose $x = \langle x_i \colon i \in I_m\rangle$ and $y = \langle y_i \colon i \in I_n\rangle$. Then $x\cdot y = \langle x_{i-n}y_i\colon i \in I_{m+n}\rangle = \langle x_my_{m+n},x_{m+1}y_{m+n+1},\ldots, x_{i-n}y_i,\ldots \rangle$. If $n\le m$, then $y\bss x=\langle (y\bss x)_i\colon i \in I^{m-n}\rangle$, where

\begin{align*}
&  (y \bss x)_i  =\begin{cases}
 e& \text{if } i\in I_{m-n}\setminus I_m=\{m-n,\ldots,m-1\}\\
y_{i-m+n}\ld x_i& \text{if } i \in I_m=\{m,m+1,\ldots\},
\end{cases}
\end{align*}
i.e., $y\bss x =\langle e,\ldots,e, y_{n}\ld x_m, \ldots, y_{i-m+n}\ld x_i,\ldots\rangle$, where at the beginning of the foregoing vector the element $e$ is $n$-times.

Similarly, if $m\le n$, then $y\sslash x = \langle (y\rd x)_i\colon i \in I_{n-m}\rangle$, where

$$ (y\sslash x)_i= y_{i+m}\rd x_{i+m} \text{ for } i\in \{n-m,n-m+1,\ldots\},
$$
i.e. $y\sslash x =\langle y_n\rd x_n, \ldots, y_{i+m}\rd x_{i+m}, \ldots \rangle$.

Now let $k\ge m+n$. Then $\langle x^k_i\colon i\in I^k_m\rangle \cdot
\langle y^k_i\colon i\in I^k_n\rangle = \langle x^k_{i-n}y^k_i\colon i \in I^k_{m+n}\rangle =\langle x^k_my^k_{m+n},\ldots, x^k_{i-n}y^k_i,\ldots, x^k_{k-n}y^k_k\rangle$.

Let $n\le m\le k$. Then  $\langle y^k_i\colon i\in I^k_n\rangle \bss
\langle x^k_i\colon i\in I^k_m\rangle =\langle (y^k\bss x^k)_i\colon i \in I^k_{m-n}\rangle$, where

\begin{align*}
&  (y^k \bss x^k)_i  =\begin{cases}
 e& \text{if } i\in I^k_{m-n}\setminus I^k_m=\{m-n,\ldots,m-1\}\\
y^k_{i-m+n}\ld x^k_i& \text{if } i \in I^k_m=\{m,m+1,\ldots,k\},
\end{cases}
\end{align*}
i.e., $\langle y^k_i\colon i\in I^k_n\rangle \bss
\langle x^k_i\colon i\in I^k_m\rangle =\langle e,\ldots,e, y^k_{n}\ld x^k_m, \ldots, y^k_{i-m+n}\ld x^k_i,\ldots, y^k_{k-m+n}\ld x^k_k\rangle$, where at the beginning of the foregoing vector the element $e$ is $n$-times.

Similarly, if $m\le n\le k$, then $\langle y^k_i\colon i\in I^k_n\rangle \sslash
\langle x^k_i\colon i\in I^k_m\rangle = \langle (y^k\rd x^k)_i\colon i \in I_{n-m}\rangle$, where

\begin{align*}
&  (y^k \sslash x^k)_i  =\begin{cases}
y^k_{i+m}\rd x^k_{i+m}& \text{if } i \in I^k_{n-m}\cap \lambda^m(I^k_n)=\{n-m,\ldots,k-m\}\\
e& \text{if } i\in I^k_{n-m}\setminus \lambda^m(I^k_n)=\{k-m+1,\ldots,k\},
\end{cases}
\end{align*}
i.e., $\langle y^k_i\colon i\in I^k_n\rangle \sslash
\langle x^k_i\colon i\in I^k_m\rangle  =\langle y^k_n\rd x^k_n,\ldots, y^k_{i+m}\rd x^k_{i+m},\ldots, y^k_{k}\rd x^k_k,e,\ldots,e\rangle$, where $e$ is $m$-times.

Let $K_3$ be the subset of $\prod_{k=1}^\infty K^{\lambda_k}_{k,k-1} (\mathbf G)$ consisting of elements of the form $\langle\langle x^k_i\colon i\in I^k_m\rangle \colon k\ge 1\rangle$, where $m\ge 0$. Then $K_3$ is a subalgebra of the product $\prod_{k=1}^\infty K^{\lambda_k}_{k,k-1} (\mathbf G)$. On $K_3$ we define a relation $\approx$ as follows:

Two vectors $x=\langle\langle x^k_i\colon i\in I^k_m\rangle \colon k\ge 1\rangle \approx y=\langle\langle y^k_i\colon i\in I^k_n\rangle \colon k\ge 1\rangle$ iff $m=n$ and there exist integers $k_0\ge m$ and $d$ with $0\le d\le k_0-m$ such that for each $k\ge k_0$, $x^k_i=y^k_i$ for $i=m,\ldots, k-d$.

\begin{proposition}\label{pr:6.4}
The relation $\approx$ is a congruence on the subalgebra $K_3$.
\end{proposition}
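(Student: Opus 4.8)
The plan is to imitate the proof of Proposition~\ref{pr:6.1} essentially line by line, the only change being that the index sets $I^k_m=\{0,\dots,k-m\}$ used there are replaced by the index sets $I^k_m=\{m,\dots,k\}$ of the kites $K^{\lambda_k}_{k,k-1}(\mathbf G)$, so that ``agreement on an initial block $\{0,\dots,k-m-d\}$'' becomes ``agreement on the block $\{m,\dots,k-d\}$''. First I would check that $\approx$ is an equivalence relation. Reflexivity is witnessed by $k_0=m$ and $d=0$, and symmetry is clear since the defining condition is symmetric in $x$ and $y$. For transitivity, if $x\approx y$ with witnesses $(k_1,d_1)$ and $y\approx z$ with witnesses $(k_2,d_2)$, all at level $m$, put $k_0=\max\{k_1,k_2\}$ and $d=\max\{d_1,d_2\}$; each $d_j\le k_j-m\le k_0-m$ gives $d\le k_0-m$, and for $k\ge k_0$ the block $\{m,\dots,k-d\}$ lies inside both $\{m,\dots,k-d_1\}$ and $\{m,\dots,k-d_2\}$, so $x^k_i=y^k_i=z^k_i$ on it.

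For compatibility with $\cdot$, take $x\approx y$ at level $m$ with witnesses $(k_1,d_1)$ and $u\approx v$ at level $n$ with witnesses $(k_2,d_2)$; by the product formula recalled above the statement, for $k\ge m+n$ one has $(x\cdot u)^k_i=x^k_{i-n}u^k_i$ and $(y\cdot v)^k_i=y^k_{i-n}v^k_i$ for $i\in\{m+n,\dots,k\}$. Put $D=\max\{d_1,d_2\}$, take $d=D$ and $k_0=\max\{k_1,k_2,m+n+D\}$; for $k\ge k_0$ and $m+n\le i\le k-D$ the shifted index $i-n$ lies in $\{m,\dots,k-d_1\}$ and $i$ lies in $\{n,\dots,k-d_2\}$, so both factors agree and $(x\cdot u)^k_i=(y\cdot v)^k_i$, while $d\le k_0-(m+n)$. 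The components with $k<m+n$, where $I^k_{m+n}=\emptyset$ and the product collapses to the all-$e$ element at level $m+n$, match trivially and are absorbed by the choice of $k_0$. Hence $x\cdot u\approx y\cdot v$.

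The two divisions are the heart of the argument, and they behave exactly as in Proposition~\ref{pr:6.1}. For $\sslash$ with $m\le n$, the component $(u^k\sslash x^k)_i$ equals $u^k_{i+m}\rd x^k_{i+m}$ on $\{n-m,\dots,k-m\}$ and $e$ on $\{k-m+1,\dots,k\}$, and likewise for $(v^k\sslash y^k)_i$; the $e$-block is common, and on the genuine entries equality holds once $i+m\le k-\max\{d_1,d_2\}$. The one genuinely new computation — exactly as for $K_1$ — is that $\sslash$ produces a trailing $e$-block of width $m$, so one must take $d=m+\max\{d_1,d_2\}$ and $k_0=\max\{k_1,k_2,n+\max\{d_1,d_2\}\}$; then the required agreement segment $\{n-m,\dots,k-d\}$ is precisely the range on which the genuine entries already coincide, and $d\le k_0-(n-m)$. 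The case $\bss$ with $n\le m$ is the mirror image: $(u^k\bss x^k)_i$ equals $e$ on the leading block $\{m-n,\dots,m-1\}$ and $u^k_{i-m+n}\ld x^k_i$ on $\{m,\dots,k\}$, the leading $e$-block is common, and $d=\max\{d_1,d_2\}$ together with $k_0=\max\{k_1,k_2,m+\max\{d_1,d_2\}\}$ works. When one argument has level exceeding $k$ the relevant division again collapses to the all-$e$ element at the correct level on both sides, so those components agree. Finally, $\wedge$ and $\vee$ are handled by a case split on levels: if $x\approx y$ and $u\approx v$ have equal levels the lattice operations act coordinatewise and agreement on the blocks is inherited; if the levels differ, one of $x\wedge u,\,x\vee u$ is literally $x$ or $u$ and the other is $y$ or $v$, so the statement reduces to the hypotheses. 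The only real obstacle is thus the bookkeeping of the shifts and of the $e$-blocks in the two division cases, and that is the same calculation already carried out for $K_1$ in Proposition~\ref{pr:6.1}.
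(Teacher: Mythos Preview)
Your proposal is correct and follows essentially the same approach as the paper: verify that $\approx$ is an equivalence by taking maxima of the witnesses, then check compatibility with $\cdot$, $\bss$, $\sslash$ by direct index bookkeeping using the explicit component formulas displayed just before the proposition. Your treatment is in fact tidier than the paper's own proof --- you give explicit $(k_0,d)$ in every case (the paper leaves the $\sslash$ case at ``easily entails''), you correctly isolate the trailing $e$-block of width $m$ in the $\sslash$ case as the place where $d$ must be enlarged, and you also cover the lattice operations, which the paper omits.
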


\begin{proof}
In the same way as in the proof of Proposition \ref{pr:6.1}, we can establish that $\approx$ is an equivalency.

Now let $x=\langle\langle x^k_i\colon i\in I^k_m\rangle \colon k\ge 1\rangle \approx y = \langle\langle y^k_i\colon i\in I^k_m\rangle \colon k\ge 1\rangle$ and $u = \langle\langle u^k_i\colon i\in I^k_n\rangle \colon k\ge 1\rangle \approx v= \langle\langle v^k_i\colon i\in I^k_n\rangle \colon k\ge 1\rangle$. There are $k_1 \ge m$, $k_2\ge n$, $0\le d_1\le k_1-m$ and $0\le d_2 \le k_2-n$ such that for each $k\ge k_1$, $x^k_i=y^k_i$ for $i=m,\ldots, k-d_1$ and for each $k\ge k_2$,  $u^k_i=v^k_i$ for $i=n,\ldots, k-d_2$.

$x\cdot u\approx y\cdot v$: Let $d=\max\{d_1,d_2\}$ and let $k\ge k_0:= \max\{k_1,k_2\}+m+n+d$. Then $k-d\ge \max\{k_1,k_2\}+m+n+d -d= \max\{k_1,k_2\}+m+n\ge m+n\ge m,n$. So that $x^k_i=y^k_i$ for $i=m,\ldots, k-d$, $u^k_i=v^k_i$ for $i=n,\ldots, m+n,\ldots,k-d$, which gives $x^k_iu^k_{i+n}$ for $i=m,\ldots, k-d$, i.e. $x\cdot u\approx y\cdot v$.

For the division $\bss$, let us assume $n\le m \le k$. Then $\langle u^k_i\colon i\in I^k_n\rangle \bss
\langle x^k_i\colon i\in I^k_m\rangle =\langle e,\ldots,e, u^k_{n}\ld x^k_m, \ldots, u^k_{i-m+n}\ld x^k_i,\ldots, u^k_{k-m+n}\ld x^k_k\rangle$, where at the beginning of the foregoing vector the element $e$ is $n$-times. If $k\ge k_0:=\max\{k_1,k_2\}+m+n+d$, then $u^k_i=v^k_i$ for $i=n,\ldots,m,\ldots, k-d$ and $x^k_i=v^k_i$ for $i=m,\ldots, k$. Hence, $u^k_{i-m+n}\ld x^k_i = v^k_{i-m+n}\ld y^k_i$ for $i=m,\ldots, k$. Then $x\bss u\approx y\bss v$.

For the division $\sslash$, let us assume $m\le n \le k$. Then we have $\langle u^k_i\colon i \in I^k_n \rangle \sslash \langle x^k_i \colon i \in I^k_m\rangle =\langle u^k_n\rd x^k_n,\ldots, u^k_{i+m}\rd x^k_{i+m},\ldots, u^k_{k}\rd x^k_k,e,\ldots,e\rangle$, where $e$ is $m$-times.
We have $x^k_i=y^k_i$ for $i=m,\ldots,n,\dots,k-d$ and $u^k_i=v^k_i$ for $i=n,\ldots,k-d$, which yields $u^k_{i}\rd x^k_{i+m}=v^k_{i}\rd y^k_{i+m}$ for $i=m,\ldots,k-d $ which easily entails that $u\sslash x\approx v\sslash y$.
\end{proof}

\begin{proposition}\label{pr:6.5}
Define a mapping $\Phi_3$ which maps $K^\lambda_{\mathbb N, \mathbb N\setminus\{0\}}(\mathbf G)$ into $K_3/\approx$  by
$$
\Phi_3(\langle x_i\colon i \in I_m\rangle):= \langle\langle x_i\colon i \in I^k_m\rangle\colon k \ge 1\rangle/\approx.
$$
Then $\Phi_3$ is an embedding of $K^\lambda_{\mathbb N, \mathbb N\setminus\{0\}}(\mathbf G)$ into $K_3/\approx$.
\end{proposition}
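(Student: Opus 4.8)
The plan is to factor $\Phi_3$ as $\Phi_3=\nu\circ\phi_3$, where $\phi_3\colon K^\lambda_{\mathbb N,\mathbb N\setminus\{0\}}(\mathbf G)\to K_3$ is the map $\langle x_i\colon i\in I_m\rangle\mapsto\langle\langle x_i\colon i\in I^k_m\rangle\colon k\ge 1\rangle$ and $\nu\colon K_3\to K_3/\approx$ is the canonical surjection. One first checks that $\phi_3$ indeed lands in $K_3$, since the layer index $m$ of the $k$-th coordinate of $\phi_3(x)$ is the same for every $k$, which is precisely the form of the elements of $K_3$. As $\approx$ is a congruence on $K_3$ by Proposition~\ref{pr:6.4}, it then suffices to prove: (a) $\phi_3$ preserves $\wedge$, $\vee$, and $1$; (b) $\phi_3$ preserves $\cdot$ and $\bss$; (c) $\phi_3(y\sslash x)\approx\phi_3(y)\sslash\phi_3(x)$; and (d) $\Phi_3$ is injective. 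From (a)--(c), using that $\nu$ collapses exactly the congruence $\approx$, it follows that $\Phi_3$ is a homomorphism of integral residuated lattices, and (d) finishes the proof.

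For (a): on each finite kite $K^{\lambda_k}_{k,k-1}(\mathbf G)$ the order is the lexicographic order on the layers $G^{I^k_j}$, and on a fixed layer it is componentwise; since $\phi_3$ fixes the layer index $m$ uniformly in $k$ and acts identically on coordinates, it preserves $\wedge$, $\vee$, and sends $1=\langle e\colon i\in I_0\rangle$ to the top of $K_3/\approx$. For (b), all the componentwise formulas needed are displayed just before the statement. For the product, when $k\ge m+n$ the $k$-th coordinate of both $\phi_3(x)\cdot\phi_3(y)$ and $\phi_3(x\cdot y)$ equals $\langle x_{i-n}y_i\colon i\in I^k_{m+n}\rangle$, and when $k<m+n$ both sides are the unique element of the singleton $G^{I^k_{m+n}}$ (as $I^k_{m+n}=\emptyset$). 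For $\bss$ with $n\le m$: the $k$-th coordinate of $\phi_3(y)\bss\phi_3(x)$ has $n$ leading $e$'s followed by the entries $y_{i-m+n}\ld x_i$, and it coincides for every $k$ with the restriction of $y\bss x$ to $I^k_{m-n}$, because the entries $y_{i-m+n}\ld x_i$ occur only for $i\ge m$, which belong to $I^k_{m-n}$ exactly when $i\le k$; in the remaining case $n>m$ both sides equal the top.

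The crux is (c): this is the only place where $\phi_3$ fails to be a homomorphism on the nose. For $m\le n\le k$, the $k$-th coordinate of $\phi_3(y)\sslash\phi_3(x)$ is $\langle y_n\rd x_n,\dots,y_{i+m}\rd x_{i+m},\dots,y_k\rd x_k,e,\dots,e\rangle$ with $m$ trailing $e$'s, these coming from the nonempty set $I^k_{n-m}\setminus\lambda_k^m(I^k_n)=\{k-m+1,\dots,k\}$ of the finite kite; whereas the $k$-th coordinate of $\phi_3(y\sslash x)$ is the honest restriction of $y\sslash x$ to $I^k_{n-m}$, namely $\langle y_n\rd x_n,\dots,y_{i+m}\rd x_{i+m},\dots,y_{k+m}\rd x_{k+m}\rangle$. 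The two vectors agree on all coordinates indexed by $i=n-m,\dots,k-m$, so taking $k_0=n$ and $d=m$ in the definition of $\approx$ yields $\phi_3(y\sslash x)\approx\phi_3(y)\sslash\phi_3(x)$.

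Finally (d): if $\Phi_3(x)=\Phi_3(y)$ with $x=\langle x_i\colon i\in I_m\rangle$ and $y=\langle y_i\colon i\in I_n\rangle$, then $\phi_3(x)\approx\phi_3(y)$ forces $m=n$ and produces $k_0\ge m$ and $0\le d\le k_0-m$ with $x_i=y_i$ for $i=m,\dots,k-d$ and every $k\ge k_0$; since $k-d\to\infty$ this gives $x_i=y_i$ for all $i\in I_m$, i.e.\ $x=y$. The main obstacle is the bookkeeping in (c): one has to pin down exactly which tail coordinates are corrupted by the finite truncation and verify that the defect $d$ can be chosen independently of $k$, which is what makes the single pair $(k_0,d)=(n,m)$ work; everything else is a routine inspection of the componentwise formulas already written out.
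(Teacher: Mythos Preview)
Your proposal is correct and follows the same approach as the paper, only far more carefully: the paper merely checks the product and then says ``in the similar way'' for both divisions and ``it is clear'' for injectivity, whereas you pin down that it is $\sslash$ (not $\bss$) which fails on the nose and is only preserved modulo $\approx$, and you exhibit the explicit witness $(k_0,d)=(n,m)$. This is precisely the mirror image of the situation for $K^\lambda_{\mathbb N,\mathbb N}(\mathbf G)$ in Propositions~\ref{pr:6.1}--\ref{pr:6.2}, where $\bss$ was the operation needing the congruence, and your account is consistent with the componentwise formulas displayed just before Proposition~\ref{pr:6.4}.
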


\begin{proof}
Choose $x = \langle x_i \colon i \in I_m\rangle$ and $y = \langle y_i \colon i \in I_n\rangle$. Then $\Phi_3(x) =\langle \langle x_i\colon i \in I_m\rangle \colon k\ge 1\rangle$, $\Phi_3(y) =\langle \langle y_i\colon i \in I_n\rangle \colon k\ge 1\rangle$.

For the product, we have $x\cdot y = \langle x_my_{m+n},x_{m+1}y_{m+n+1},\ldots, x_{i-n}y_i,\ldots \rangle$, and $\Phi_3(x\cdot y)= \langle \langle x_{i-n}y_i\colon i \in I^k_{m+n}\rangle \colon k\ge 1\rangle/\approx = \Phi_3(x)\cdot \Phi_3(y)/\approx$.

In the similar way we can establish that $\Phi_3$ preserves $\bss$ and $\sslash$, i.e. $\Phi_3$ is a homomorphism. Now it is clear that $\Phi_3$ is injective.
\end{proof}

Now we present the main result of this section.

\begin{theorem}\label{th:6.6}
The variety $\mathsf K$ of integral residuated lattices generated by all kites is generated by all finite-dimensional kites.
\end{theorem}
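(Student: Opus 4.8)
The plan is to deduce the theorem from the explicit embeddings of this section together with the reduction to subdirectly irreducible kites. Write $\mathcal{K}_{\mathrm{fin}}$ for the class of all finite-dimensional kites and $\mathsf{K}_{\mathrm{fin}}:=\mathsf{V}(\mathcal{K}_{\mathrm{fin}})$ for the variety it generates. Since every finite-dimensional kite is a kite, $\mathsf{K}_{\mathrm{fin}}\subseteq\mathsf{K}$ is immediate, so only $\mathsf{K}\subseteq\mathsf{K}_{\mathrm{fin}}$ requires an argument. By Theorem \ref{th:5.14}, $\mathsf{K}$ is generated by all subdirectly irreducible kites, hence it suffices to show that each subdirectly irreducible kite lies in $\mathsf{K}_{\mathrm{fin}}$.

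First I would walk through the classification. If the base algebra $\mathbf{G}$ is trivial then, by Example \ref{sub:5}, the kite is isomorphic to $\mathbf{Z}^-\cong K^\emptyset_{\emptyset,\emptyset}(\mathbf{G})$, which is $0$-dimensional and hence in $\mathcal{K}_{\mathrm{fin}}$. If $\mathbf{G}$ is non-trivial and $I_0$ is finite, Theorem \ref{th:5.8} identifies the kite with one of the finite-dimensional kites listed there, so again it lies in $\mathcal{K}_{\mathrm{fin}}\subseteq\mathsf{K}_{\mathrm{fin}}$. The remaining case is $\mathbf{G}$ non-trivial with $I_0$ infinite; then $|I_0|=\aleph_0$ by Proposition \ref{pr:5.5}, and by Theorem \ref{th:5.11} the kite is isomorphic to exactly one of $K^\lambda_{\mathbb Z,\mathbb Z}(\mathbf{G})$, $K^\lambda_{\mathbb N,\mathbb N\setminus\{0\}}(\mathbf{G})$, $K^\lambda_{\mathbb N,\mathbb N}(\mathbf{G})$.

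For these three kites I would simply invoke the embeddings already established: Proposition \ref{pr:6.3} embeds $K^\lambda_{\mathbb Z,\mathbb Z}(\mathbf{G})$ into the subalgebra $K_2$ of the product $\prod_{k\ge 0}K^{\lambda_k}_{k}(\mathbf{G})$; Proposition \ref{pr:6.2} embeds $K^\lambda_{\mathbb N,\mathbb N}(\mathbf{G})$ into $K_1/\approx$, a homomorphic image of the subalgebra $K_1$ of $\prod_{k\ge 1}K^{\lambda_k}_{k+1,k}(\mathbf{G})$; and Proposition \ref{pr:6.5} embeds $K^\lambda_{\mathbb N,\mathbb N\setminus\{0\}}(\mathbf{G})$ into $K_3/\approx$, a homomorphic image of the subalgebra $K_3$ of $\prod_{k\ge 1}K^{\lambda_k}_{k,k-1}(\mathbf{G})$. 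In every case the algebra receiving the embedding is a homomorphic image of a subalgebra of a direct product of members of $\mathcal{K}_{\mathrm{fin}}$, hence lies in $\mathsf{K}_{\mathrm{fin}}$; as varieties are closed under subalgebras, the kite itself lies there too. Thus every subdirectly irreducible kite belongs to $\mathsf{K}_{\mathrm{fin}}$, so $\mathsf{K}=\mathsf{V}(\text{subdirectly irreducible kites})\subseteq\mathsf{K}_{\mathrm{fin}}$, and therefore $\mathsf{K}=\mathsf{K}_{\mathrm{fin}}$.

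The theorem is therefore a short assembly, and the real obstacle was met earlier in the section: choosing, for each of the three countably infinite subdirectly irreducible kites, the appropriate family of finite-dimensional kites, and, for the two one-sidedly infinite ones, the congruence $\approx$ identifying coordinate sequences that eventually agree. That quotient cannot be avoided, because truncating an infinite coordinate sequence to a finite one does not commute with the index shift built into the left division $\bss$; consequently the coordinatewise map $\phi_1$ fails to preserve $\bss$ and becomes an embedding only after passing to $K_1/\approx$, and the analogous phenomenon forces the quotient $K_3/\approx$ in the case $K^\lambda_{\mathbb N,\mathbb N\setminus\{0\}}(\mathbf{G})$ (whereas the two-sidedly infinite kite $K^\lambda_{\mathbb Z,\mathbb Z}(\mathbf{G})$ embeds directly, via $\phi_2$, with no quotient needed).
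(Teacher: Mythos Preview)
Your proof is correct and follows essentially the same route as the paper: reduce via Theorem~\ref{th:5.14} to subdirectly irreducible kites, note that the finite-dimensional ones are already in $\mathsf{K}_{\mathrm{fin}}$, and then use Propositions~\ref{pr:6.2}, \ref{pr:6.3}, \ref{pr:6.5} to place each of the three countably infinite types from Theorem~\ref{th:5.11} inside $\mathsf{K}_{\mathrm{fin}}$. Your version is in fact slightly more explicit than the paper's, spelling out the trivial-$\mathbf{G}$ and finite-$I_0$ cases and the HSP reasoning behind the embeddings.
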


\begin{proof}
By Theorem \ref{th:5.14}, the variety $\mathsf K$ is generated by all subdirectly irreducible kites. Theorem \ref{th:5.11} describes all infinite-dimensional subdirectly irreducible kites. Up to isomorphism, there are only three non-isomorphic infinite-dimensional subdirectly irreducible kites, and each of them can be embedded into the variety $\mathsf K_f$, the variety generated by all finite-dimensional kites, as it follows from Propositions \ref{pr:6.2}, \ref{pr:6.3}, \ref{pr:6.5}. Therefore, $\mathsf K=\mathsf K_f$.
\end{proof}

\begin{corollary}\label{co:6.7}
The variety $\mathsf K$ is the varietal join of varieties $\mathsf K_n$ of integral residuated lattices generated by $n$-dimensional kites, that is, $\mathsf K=\bigvee_{n=0}^\infty \mathsf K_n$.
\end{corollary}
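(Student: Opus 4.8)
The plan is to derive this immediately from Theorem~\ref{th:6.6} together with the elementary fact that the variety generated by a union of classes of algebras equals the varietal join of the varieties generated by the individual classes.

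First I would note that, by definition, a finite-dimensional kite is precisely a kite $K^\lambda_{I_0,I_1}(\mathbf G)$ with $I_0$ finite, and such a kite is $n$-dimensional with $n=|I_0|\in\mathbb N$. Hence the class $\mathcal K_f$ of all finite-dimensional kites is the union $\mathcal K_f=\bigcup_{n=0}^\infty \mathcal K_n$ (with $\mathcal K_0$ consisting of the kites with $I_0=\emptyset$, which are all isomorphic to $\mathbf Z^-$), so that the variety $\mathsf K_f$ generated by $\mathcal K_f$ equals $\mathsf V\bigl(\bigcup_{n=0}^\infty \mathcal K_n\bigr)$.

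Next I would record the general identity $\mathsf V\bigl(\bigcup_{n}\mathcal C_n\bigr)=\bigvee_{n}\mathsf V(\mathcal C_n)$, valid for any family $\{\mathcal C_n\}$ of classes of algebras of a fixed type. For the inclusion $\supseteq$: each $\mathcal C_m\subseteq\bigcup_n\mathcal C_n$, so $\mathsf V(\mathcal C_m)\subseteq\mathsf V\bigl(\bigcup_n\mathcal C_n\bigr)$ for every $m$, and joining these inclusions gives $\bigvee_m\mathsf V(\mathcal C_m)\subseteq\mathsf V\bigl(\bigcup_n\mathcal C_n\bigr)$. For the reverse inclusion: $\mathcal C_m\subseteq\mathsf V(\mathcal C_m)\subseteq\bigvee_n\mathsf V(\mathcal C_n)$ for every $m$, hence $\bigcup_m\mathcal C_m\subseteq\bigvee_n\mathsf V(\mathcal C_n)$; since the right-hand side is a variety and $\mathsf V\bigl(\bigcup_m\mathcal C_m\bigr)$ is the least variety containing $\bigcup_m\mathcal C_m$, it follows that $\mathsf V\bigl(\bigcup_m\mathcal C_m\bigr)\subseteq\bigvee_n\mathsf V(\mathcal C_n)$.

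Finally I would chain the equalities: by Theorem~\ref{th:6.6}, $\mathsf K=\mathsf K_f=\mathsf V\bigl(\bigcup_{n=0}^\infty\mathcal K_n\bigr)=\bigvee_{n=0}^\infty\mathsf V(\mathcal K_n)=\bigvee_{n=0}^\infty\mathsf K_n$, which is exactly the assertion. There is essentially no obstacle here beyond this bookkeeping; the only point deserving an explicit sentence is that finiteness of $I_0$ forces $|I_0|$ to be one concrete natural number, so that the union $\bigcup_n\mathcal K_n$ genuinely exhausts all finite-dimensional kites and none is left out.
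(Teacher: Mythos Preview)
Your argument is correct and is exactly the approach the paper intends: the corollary is stated with no proof, as an immediate consequence of Theorem~\ref{th:6.6}, and you have simply spelled out the routine varietal bookkeeping (namely $\mathsf V\bigl(\bigcup_n\mathcal K_n\bigr)=\bigvee_n\mathsf V(\mathcal K_n)$) that the paper leaves implicit.
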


\section{Homomorphisms between Kites}

In the section we show how we can simply construct a homomorphism from one kite $\textstyle K^\kappa_{J_0,J_1}(\mathbf G)$  into another one $\textstyle K^\lambda_{I_0,I_1}(\mathbf G)$.

In the previous sections we presented one construction of a kite which is an integral residuated lattice using an integral residuated lattice and the system of sets $I_1\subseteq I_0$ together with an injection $\lambda\colon I_1\longrightarrow I_0$. We call this system \emph{a frame} in this section and we denote it by $(I_0,I_1,\lambda)$. The main goal is a description of transformations of those frames which correspond (contravariantly) to homomorphisms of residuated lattice.

Our construction is motivated by a well-known construction. Having two sets $I$ and $J$ together with a mapping $f\colon I\longrightarrow J$, then for any algebra $\mathbf A$ of arbitrary type, the mapping
$$
\mathbf A^f\colon A^J\longrightarrow A^I
$$
defined by
$$
A^f(x)(i)=x(f(i)) \mbox{ for all } x\in A \mbox{ and } i\in I,
$$
is a homomorphism. Analogously we define a new concept a ``transformation of frames"
$$t\colon (I_0,I_1,\lambda)\longrightarrow (J_0,J_1,\kappa)
$$
leading to a homomorphism
$$
\mathcal K(t)\colon \textstyle{K}_{J_0,J_1}^\kappa (\mathbf G)\longrightarrow \textstyle{K}_{I_0,I_1}^\lambda (\mathbf G)
$$
for any integral residuated lattice $\mathbf G$.

\begin{definition}\label{DefTrans}
{\rm Let $(I_0,I_1,\lambda)$ and $(J_0,J_1,\kappa)$ be frames. Then the mapping $t\colon I_0\longrightarrow J_0$ is a \emph{transformation of the frames $(I_0,I_1,\lambda)$ and $(J_0,J_1,\kappa)$} if it satisfies:
\begin{itemize}
\item[(1)] $t^{-1}(J_1)=I_1$,
\item[(2)] $t^{-1}\kappa(J_1)=\lambda (I_1)$,
\item[(3)] any $i\in I_1$ satisfies $t\lambda (i)=\kappa t(i)$.

\end{itemize}
}
\end{definition}

To state the main theorem of this section it is necessary to prove several easy lemmas.

\begin{lemma}\label{TranLemma1}
Having a transformation $t$ of the frames $(I_0,I_1,\lambda)$ and $(J_0,J_1,\kappa)$, the equality $t^{-1}(J_n)=I_n$ holds for any $n\in\mathbb N$.
\end{lemma}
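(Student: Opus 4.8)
The claim is that $t^{-1}(J_n)=I_n$ for every $n\in\mathbb N$, where the sets $I_n,J_n$ are defined inductively by $I_{n+1}=\{i\in I_n:\lambda(i)\in I_n\}$ and similarly for $J$. I would proceed by induction on $n$. The base case $n=0$ is immediate from $t\colon I_0\to J_0$ being a total map, so $t^{-1}(J_0)=I_0$; and the case $n=1$ is precisely condition~(1) of Definition~\ref{DefTrans}, namely $t^{-1}(J_1)=I_1$. So the real content is the inductive step.

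\emph{Inductive step.} Assume $t^{-1}(J_n)=I_n$; I want $t^{-1}(J_{n+1})=I_{n+1}$. Unravelling the definitions,
\[
i\in t^{-1}(J_{n+1})\iff t(i)\in J_{n+1}\iff t(i)\in J_n\ \text{and}\ \kappa(t(i))\in J_n,
\]
while
\[
i\in I_{n+1}\iff i\in I_n\ \text{and}\ \lambda(i)\in I_n.
\]
By the inductive hypothesis, $t(i)\in J_n$ is equivalent to $i\in I_n$; so it remains to see that, \emph{for $i\in I_n$}, the condition $\kappa(t(i))\in J_n$ is equivalent to $\lambda(i)\in I_n$. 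The key point is that for $i\in I_n\subseteq I_1$ we may apply condition~(3), $\kappa(t(i))=t(\lambda(i))$, which turns $\kappa(t(i))\in J_n$ into $t(\lambda(i))\in J_n$, i.e. $\lambda(i)\in t^{-1}(J_n)=I_n$ by the inductive hypothesis. This closes the induction.

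\emph{The main obstacle}, such as it is, is a small bookkeeping subtlety: to invoke condition~(3) at the element $i$ one needs $i\in I_1$, and to even make sense of $\lambda(i)$ one needs $i$ to lie in the domain of $\lambda$; both are guaranteed since $i\in I_n\subseteq I_1$ for $n\ge 1$ (and the $n=1$ case is handled separately by condition~(1), where one does not yet need~(3)). One should also note in passing that $I_n\subseteq I_{n-1}\subseteq\cdots\subseteq I_1$, so $\lambda$ is defined on all of $I_n$, and likewise $\kappa$ on $t(i)\in J_n\subseteq J_1$; these containments are part of the setup recalled right after the definition of the $I_n$. With these remarks in place the argument is a routine two-line induction, and I would present it as such.
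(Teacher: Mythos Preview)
Your proof is correct and follows essentially the same inductive argument as the paper's. The only cosmetic difference is that the paper establishes the two inclusions $I_n\subseteq t^{-1}(J_n)$ and $t^{-1}(J_n)\subseteq I_n$ by two separate inductions, whereas you handle both at once via a chain of equivalences; your version is in fact slightly more careful in making explicit why $i\in I_1$ (so that condition~(3) applies), a point the paper leaves implicit.
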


\begin{proof}
Firstly, we inductively prove an inclusion $I_n\subseteq t^{-1}(J_n)$. The condition is supposed in Definition \ref{DefTrans}(1) for $n=1$. Let $I_n\subseteq t^{-1}(J_n)$ for some $n\in \mathbb N$. If $i\in I_{n+1}$, then $\lambda (i)\in I_n\subseteq t^{-1}(J_n)$ and consequently $\kappa t(i)=t\lambda (i)\in J_n$. Thus $t(i)\in J_{n+1}$ and $i\in t^{-1}(J_{n+1})$.

Also the converse inclusion $t^{-1}(J_n)\subseteq I_n$ will be proved inductively. The case $n=1$ is clear. If $t^{-1}(J_n)\subseteq I_n$ holds for some $n\in\mathbb N$. Then $i\in t^{-1}(J_{n+1})$ implies $t(i)\in J_{n+1}$ and also $t\lambda (i)=\kappa t (i) \in J_n$. Finally, we obtain $\lambda (i)\in t^{-1}(J_n)\subseteq I_n$ which give us $i\in I_{n+1}$.
\end{proof}

We recall that the injectivity of the mappings $\lambda$ and $\kappa$ guarantees the uniqueness of inverses if it exists.

\begin{lemma}\label{TranLemma2}
Having a transformation $t$ of the frames $(I_0,I_1,\lambda)$ and $(J_0,J_1,\kappa)$, then for any $i\in I_0$, the element $\lambda^{-1}(i)$ exists if and only if $\kappa^{-1}t(i)$ exists, and then $t\lambda^{-1}(i)=\kappa^{-1}t(i)$.
\end{lemma}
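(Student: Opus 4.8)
The plan is to prove the two implications separately and then read off the identity $t\lambda^{-1}(i)=\kappa^{-1}t(i)$ from the equality in Definition \ref{DefTrans}(3). First suppose $\lambda^{-1}(i)$ exists, i.e. there is some $j\in I_1$ with $\lambda(j)=i$. Then $j\in I_1$, so Definition \ref{DefTrans}(3) applies and yields $\kappa t(j)=t\lambda(j)=t(i)$. In particular $t(i)\in\kappa(J_1)$, so $\kappa^{-1}t(i)$ exists and, by the injectivity of $\kappa$, equals $t(j)=t\lambda^{-1}(i)$. This already delivers both the forward implication and the claimed identity.

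For the converse, suppose $\kappa^{-1}t(i)$ exists, i.e. $t(i)\in\kappa(J_1)$. Then $i\in t^{-1}\kappa(J_1)$, and Definition \ref{DefTrans}(2) gives $i\in\lambda(I_1)$, which is exactly the statement that $\lambda^{-1}(i)$ exists. (Once existence is secured, the identity follows from the first half of the argument.) Thus the two existence conditions are equivalent and the displayed equality holds whenever they do.

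The only point that requires a little care—and it is the closest thing to an obstacle here—is making sure the inverses are well defined, i.e. single-valued: this is precisely where we invoke the injectivity of $\lambda$ and $\kappa$, remarked on just before the statement. Beyond that, the lemma is a direct unfolding of the three conditions in Definition \ref{DefTrans}, with condition (3) supplying the forward direction and the numerical identity, and condition (2) supplying the backward direction; condition (1) and Lemma \ref{TranLemma1} are not needed for this particular statement.
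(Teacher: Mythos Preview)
Your proof is correct and follows essentially the same route as the paper: Definition~\ref{DefTrans}(3) gives the forward implication and the identity, while Definition~\ref{DefTrans}(2) gives the converse. Your explicit naming of the preimage $j$ and the remark on injectivity are minor expository additions, but the argument is the same.
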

\begin{proof}
If $\lambda^{-1}(i)$ exists, using Definition \ref{DefTrans}(3), we obtain $\kappa t\lambda^{-1}(i)=t\lambda\lambda^{-1}(i)=t(i)$ and thus $\kappa^{-1}t(i)$ exists and moreover  $t\lambda^{-1}(i)=\kappa^{-1}t(i)$ holds. Conversely, if $\kappa^{-1}t(i)$ exists then evidently $\kappa^{-1}t(i)\in J_1$ and thus $i\in t^{-1}\kappa (J_1)=\lambda (I_1)$, see Definition \ref{DefTrans}(2). The last proposition yields the existence of $\lambda^{-1}(i)$.
\end{proof}

\begin{lemma}\label{TranLemma3}
Having a frame $(I_0,I_1,\lambda)$ and any $m,n\in\mathbb N$ such that $m\leq n$, then $i\in\lambda^{m+1}(I_{n+1})$ if and only if $i\in I_{n-m}$ and $\lambda^{-1}(i)\in \lambda^m(I_n)$.
\end{lemma}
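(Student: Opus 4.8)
The plan is to rewrite both sides of the asserted equivalence in terms of preimages under powers of $\lambda$ and then to isolate the one genuine ingredient, a ``pullback'' property of the tower $(I_k)_{k\in\mathbb N}$.

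First I would note that each of the two conditions in the statement forces $\lambda^{-1}(i)$ to exist (on the left, if $i=\lambda^{m+1}(j)$ with $j\in I_{n+1}$ then $i=\lambda(\lambda^m(j))$, and the right-hand condition mentions $\lambda^{-1}(i)$ explicitly), so the equivalence is trivial unless $\lambda^{-1}(i)=:i'$ exists; assume it does. Using injectivity of $\lambda$ one checks directly that $i\in\lambda^{m+1}(I_{n+1})$ if and only if $i'\in\lambda^m(I_{n+1})$. Since $I_{n+1}\subseteq I_n$ and $\lambda^{m+1}(I_{n+1})\subseteq I_{n-m}$ (both from $\lambda(I_k)\subseteq I_{k-1}$), this immediately gives the ``only if'' direction of the lemma, and reduces the ``if'' direction to the implication: if $i\in I_{n-m}$ and $i'\in\lambda^m(I_n)$, then $i'\in\lambda^m(I_{n+1})$.

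To prove that implication, write $i'=\lambda^m(j)$ with $j\in I_n$; as $m\le n$ we have $j\in I_m$ and $i'=\lambda^m(j)\in I_{n-m}$, and since also $\lambda(i')=i\in I_{n-m}$ the defining recursion gives $i'\in I_{n-m+1}=I_{(n+1)-m}$. By injectivity of $\lambda^m$, $j$ is the unique $\lambda^m$-preimage of $i'$, so $i'\in\lambda^m(I_{n+1})$ is equivalent to $j\in I_{n+1}$, and it therefore suffices to establish the auxiliary claim: for $0\le k\le r$ and $j\in I_k$, one has $\lambda^k(j)\in I_{r-k}$ if and only if $j\in I_r$. Applying this with $k=m$, $r=n+1$ finishes the argument.

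The auxiliary claim is the only real obstacle, and I expect it to need a short nested induction. The implication $j\in I_r\Rightarrow\lambda^k(j)\in I_{r-k}$ is just that $\lambda^k$ maps $I_r$ into $I_{r-k}$. For the converse I would first treat $k=1$ (``$j\in I_1$ and $\lambda(j)\in I_{r-1}$ imply $j\in I_r$'') by induction on $r$, the cases $r=1,2$ being immediate from $I_1\subseteq I_0$ and from $I_2=\{i\in I_1\colon\lambda(i)\in I_1\}$, and the step $r\to r+1$ using $I_{r+1}=\{i\in I_r\colon\lambda(i)\in I_r\}$ together with the inductive hypothesis; then the general case follows by induction on $k$, replacing $j$ by $\lambda(j)\in I_{k-1}$ and $r$ by $r-1$. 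Beyond careful bookkeeping of indices there is nothing subtle, except that it is essential to use injectivity to pass from ``$\lambda^m(j)\in\lambda^m(I_{n+1})$'' to ``$j\in I_{n+1}$'' rather than to some other preimage.
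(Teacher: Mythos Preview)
Your proof is correct and follows essentially the same structure as the paper's: both directions are argued by tracking the preimage $j=\lambda^{-(m+1)}(i)$ and using $I_{n+1}\subseteq I_n$ together with $\lambda^{m+1}(I_{n+1})\subseteq I_{n-m}$ for the forward direction, and the ``pullback'' property for the converse.

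The one notable difference is that the paper disposes of the converse in a single sentence, asserting that $j=\lambda^{-(m+1)}(i)\in I_n$ together with $i\in I_{n-m}$ gives $j\in I_{n+1}$ without further comment. Your auxiliary claim (for $0\le k\le r$ and $j\in I_k$, $\lambda^k(j)\in I_{r-k}\Leftrightarrow j\in I_r$) is exactly the content of that sentence, and your nested induction supplies a justification the paper omits. An even shorter route to the same auxiliary fact is the explicit description $j\in I_r\Leftrightarrow j,\lambda(j),\dots,\lambda^{r-1}(j)\in I_1$, which follows at once from the recursion and makes the implication immediate: $j\in I_n$ gives $j,\dots,\lambda^{n-1}(j)\in I_1$, while $i=\lambda^{m+1}(j)\in I_{n-m}$ gives $\lambda^{m+1}(j),\dots,\lambda^{n}(j)\in I_1$, and the union of these lists yields $j\in I_{n+1}$. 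Either way, you have correctly identified and filled the only nontrivial step.
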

\begin{proof}
If $i\in \lambda^{m+1}(I_{n+1})\subseteq I_{n-m}$ then $\lambda ^{-(m+1)}(i)$ exists and $\lambda ^{-(m+1)}(i)\in I_{n+1}\subseteq I_n$. Thus $\lambda ^{-1}(i)\in \lambda^m(I_n)$.

Conversely, having $i\in I_{n-m}$ such that $\lambda^{-1}(i)\in \lambda^m(I_n)$, then there exists $\lambda ^{-(m+1)}(i)\in I_n$. The proposition $i\in I_{n-m}$ yields $\lambda ^{-(m+1)}(i) \in I_{n+1}$ and thus $i\in \lambda^{m+1}(I_{n+1})$.
\end{proof}

\begin{lemma}\label{TranLemma4}
Having a transformation $t$ of the frames $(I_0,I_1,\lambda)$ and $(J_0,J_1,\kappa)$, the equality $t^{-1}\kappa^m(J_n)=\lambda^m(I_n)$ holds for any $m,n\in\mathbb N$ such that $m\leq n$.
\end{lemma}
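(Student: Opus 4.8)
The plan is to prove, by induction on $m$, the statement $S(m)$: ``$t^{-1}\kappa^m(J_n)=\lambda^m(I_n)$ for every $n\in\mathbb N$ with $n\ge m$''. This is exactly what is needed, and it sets up the induction hypothesis in a form strong enough to feed the step. Throughout I read a condition such as ``$\lambda^{-1}(i)\in\lambda^m(I_n)$'' as false whenever $\lambda^{-1}(i)$ is undefined, so that the equivalences in Lemma \ref{TranLemma3} are genuine equivalences of possibly-vacuous statements.

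\textbf{Base case.} For $m=0$ the claim is $t^{-1}(J_n)=I_n$ for all $n$, which is precisely Lemma \ref{TranLemma1}.

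\textbf{Inductive step.} Assume $S(m)$ and fix $n\ge m+1$; write $n=n'+1$ with $n'\ge m$. I would apply Lemma \ref{TranLemma3} to the two frames: for $i\in I_0$, $i\in\lambda^{m+1}(I_{n'+1})$ iff $i\in I_{n'-m}$ and $\lambda^{-1}(i)\in\lambda^m(I_{n'})$; and for $j\in J_0$, $j\in\kappa^{m+1}(J_{n'+1})$ iff $j\in J_{n'-m}$ and $\kappa^{-1}(j)\in\kappa^m(J_{n'})$. Taking $j=t(i)$, I then match the two pairs of conditions. The first condition is handled by Lemma \ref{TranLemma1}: $i\in I_{n'-m}$ iff $t(i)\in J_{n'-m}$. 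For the second, Lemma \ref{TranLemma2} says $\lambda^{-1}(i)$ exists iff $\kappa^{-1}t(i)$ exists, and in that case $t\lambda^{-1}(i)=\kappa^{-1}t(i)$; meanwhile the induction hypothesis $S(m)$ at the index $n'$ gives $\lambda^m(I_{n'})=t^{-1}\kappa^m(J_{n'})$, so $\lambda^{-1}(i)\in\lambda^m(I_{n'})$ iff $t\lambda^{-1}(i)\in\kappa^m(J_{n'})$ iff $\kappa^{-1}t(i)\in\kappa^m(J_{n'})$. Chaining these equivalences yields $i\in\lambda^{m+1}(I_{n'+1})$ iff $t(i)\in\kappa^{m+1}(J_{n'+1})$, i.e. $\lambda^{m+1}(I_{n'+1})=t^{-1}\kappa^{m+1}(J_{n'+1})$, which is $S(m+1)$ (since $n'\ge m$ was arbitrary). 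The restriction $m\le n$ in the statement is exactly what keeps all the $\lambda^m$, $\kappa^m$ meaningful and matches the hypotheses of Lemma \ref{TranLemma3}.

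\textbf{Main obstacle.} There is no real computation here; the only delicate point is the bookkeeping around the partial inverses $\lambda^{-1}$ and $\kappa^{-1}$ — one must verify that Lemma \ref{TranLemma2} synchronizes the definedness of $\lambda^{-1}(i)$ and of $\kappa^{-1}t(i)$ before comparing memberships, so that the vacuous cases on the two sides correspond. Once this is observed, the induction closes immediately.
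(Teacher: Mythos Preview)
Your proof is correct and follows essentially the same route as the paper: both reduce the pair $(m+1,n+1)$ to $(m,n)$ via Lemma~\ref{TranLemma3}, using Lemma~\ref{TranLemma1} for the $I_{n-m}$/$J_{n-m}$ condition and Lemma~\ref{TranLemma2} to synchronize the partial inverses. The only cosmetic difference is that the paper organizes the induction on $n$ (fixing $n$ and all $m\le n$, then passing to $n+1$), whereas you induct on $m$; the underlying chain of equivalences is identical.
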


\begin{proof}
The part of this lemma for any $n\in\mathbb N$ and $m=0$ was proved in Lemma \ref{TranLemma1}. The case $1\leq m\leq n$ we prove inductively. It is clear that lemma holds for $n=0,1$.

Let us suppose that $t^{-1}\kappa^m(J_n)=\lambda^m(I_n)$ holds for some $n\in \mathbb N$ and any $m\in\mathbb N$ such that $m\leq n$. If $1\leq m\leq n+1$, using Lemmas \ref{TranLemma1}--\ref{TranLemma3}, we obtain the following equivalencies

\begin{eqnarray*}
&& i\in t^{-1}\kappa^m(J_{n+1})
\\&\Leftrightarrow& t(i)\in \kappa^m(J_{n+1})\\
&\Leftrightarrow& \kappa^{-1}t(i)\in \kappa^{m-1}(J_{n})\mbox{ and }t(i)\in J_{n-m+1}\\
&\Leftrightarrow& t\lambda^{-1}(i)\in \kappa^{m-1}(J_{n})\mbox{ and }i\in t^{-1}(J_{n-m+1})\\
&\Leftrightarrow& \lambda^{-1}(i)\in t^{-1}(\kappa^{m-1}(J_{n}))=\lambda^{m-1}(I_{n})\mbox{ and }i\in I_{n-m+1}\\
&\Leftrightarrow& i\in \lambda^m(I_{n+1}).
\end{eqnarray*}
\end{proof}

We have proved all claims to state the main theorem of the section.

\begin{theorem}\label{th:hom}
Let us have a transformation $t$ of the frames $(I_0,I_1,\lambda)$ and $(J_0,J_1,\kappa)$, and an integral residuated lattice $\mathbf G$. There exists a homomorphism of residuated lattices
$$\mathcal K(t)\colon \textstyle{K}_{J_0,J_1}^\kappa (\mathbf G)\longrightarrow \textstyle{K}_{I_0,I_1}^\lambda (\mathbf G)
$$
defined by
$$\mathcal K(t)(\langle x_i\colon i\in J_n \rangle)=\langle x_{t(i)}\colon i\in I_n \rangle.
$$
\end{theorem}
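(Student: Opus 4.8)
The plan is to check, directly from the definitions, that the displayed formula yields a well-defined map $\mathcal K(t)$ and that it commutes with every operation of the residuated lattice, the only non-trivial inputs being Lemmas \ref{TranLemma1}--\ref{TranLemma4}.

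First I would treat well-definedness and the lattice reduct. An element of $K^\kappa_{J_0,J_1}(\mathbf G)$ has the form $\langle x_i\colon i\in J_n\rangle$ for some $n\in\mathbb N$, and by Lemma \ref{TranLemma1} we have $t^{-1}(J_n)=I_n$, so $t$ restricts to a map $I_n\to J_n$ and $\langle x_{t(i)}\colon i\in I_n\rangle$ is a legitimate element of $G^{I_n}$ (when $J_n=\emptyset$ then $I_n=\emptyset$ and the unique element is sent to the unique element). Hence $\mathcal K(t)$ carries $G^{J_n}$ into $G^{I_n}$ for every $n$; in particular it fixes $1=\langle e\colon i\in J_0\rangle$, and since on each common level it is just precomposition with $t|_{I_n}$, while comparisons between elements of different levels depend only on the level index, $\mathcal K(t)$ preserves $\wedge$ and $\vee$.

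Before the main computation I would record two iterated identities. From Definition \ref{DefTrans}(3), together with the inclusions $I_n\subseteq I_1$ furnished by Lemma \ref{TranLemma1}, an induction on $n$ gives $t\circ\lambda^n=\kappa^n\circ t$ on $I_n$; similarly, iterating Lemma \ref{TranLemma2} yields $t\circ\lambda^{-m}=\kappa^{-m}\circ t$ wherever either side is defined, and by Lemma \ref{TranLemma4} the domain $\lambda^m(I_n)$ of the left-hand side is mapped by $t$ exactly onto $\kappa^m(J_n)$. Granting these, preservation of $\cdot$ is immediate: for $a=\langle x_i\colon i\in J_m\rangle$ and $b=\langle y_i\colon i\in J_n\rangle$, both $\mathcal K(t)(a\cdot b)$ and $\mathcal K(t)(a)\cdot\mathcal K(t)(b)$ evaluate to $\langle x_{\kappa^{n}(t(i))}\,y_{t(i)}\colon i\in I_{m+n}\rangle$ once one uses $\kappa^n\circ t=t\circ\lambda^n$ on $I_{m+n}\subseteq I_n$.

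For the two divisions I would follow the case split already present in their definitions, with $a=\langle x_i\colon i\in J_m\rangle$ and $b=\langle y_i\colon i\in J_n\rangle$. For $b\sslash a$ with $m>n$, both sides equal $1$ since $\mathcal K(t)$ preserves levels; for $m\le n$, the $i$-th coordinate of $\mathcal K(t)(b\sslash a)$ is the $t(i)$-th coordinate of $b\sslash a$, which is $y_{\kappa^{-m}(t(i))}\rd x_{\kappa^{-m}(t(i))}$ exactly when $t(i)\in\kappa^m(J_n)$, that is (Lemma \ref{TranLemma4}) exactly when $i\in\lambda^m(I_n)$, and on that set $\kappa^{-m}(t(i))=t(\lambda^{-m}(i))$, so this coordinate matches the $i$-th coordinate of $\mathcal K(t)(b)\sslash\mathcal K(t)(a)$; off that index set both are $e$. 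The computation for $\bss$ is the same pattern but simpler, using $t^{-1}(J_m)=I_m$ for the index split and $t\circ\lambda^{m-n}=\kappa^{m-n}\circ t$ on $I_{m-n}$ for the matching of values. Assembling the cases shows that $\mathcal K(t)$ preserves $\cdot$, $\sslash$, $\bss$, $\wedge$, $\vee$ and $1$, so it is a homomorphism of residuated lattices. I expect the $\sslash$ identity to be the one requiring the most care, since it is the only place manipulating the partially defined inverse $\lambda^{-m}$: one has to be sure that $t$ maps the set where $\lambda^{-m}(i)$ is defined onto the set where $\kappa^{-m}(t(i))$ is defined and that the values then coincide, which is precisely where Lemma \ref{TranLemma4} and the iterated form of Lemma \ref{TranLemma2} are used. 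Everything else is routine bookkeeping with the level indices.
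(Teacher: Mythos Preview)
Your proposal is correct and follows essentially the same approach as the paper: both verify well-definedness via Lemma \ref{TranLemma1} and then check preservation of each operation directly, invoking Lemma \ref{TranLemma1} for the $\bss$ case-split and Lemma \ref{TranLemma4} (plus the iterated form of Lemma \ref{TranLemma2}) for the $\sslash$ case-split. Your explicit recording of the iterated identities $t\circ\lambda^n=\kappa^n\circ t$ and $t\circ\lambda^{-m}=\kappa^{-m}\circ t$ is a nice touch that the paper leaves implicit in its computations.
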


\begin{proof}
Firstly we prove that the mapping $\mathcal{K}(t)$ preserves supremas and infimas. Let us have $\langle x_i\colon i\in J_n \rangle,\langle y_i\colon i\in J_m \rangle\in \textstyle{K}_{J_0,J_1}^\kappa (\mathbf G)$. If $m\not = n$,  without lost of generality we can assume $m<n$ and thus
\begin{eqnarray*}
\mathcal K(t)(\langle x_i\colon i\in J_n \rangle \vee \langle y_i\colon i\in J_m \rangle) &=& \mathcal K(t)(\langle y_i\colon i\in J_m \rangle)\\
&=&\langle y_{t(i)}\colon i\in I_m \rangle\\
&=&\langle x_{t(i)}\colon i\in I_n\rangle\vee \langle y_{t(i)}\colon i\in I_m \rangle\\
&=&\mathcal K(t)(\langle x_i\colon i\in J_n \rangle) \vee \mathcal K(t)(\langle y_i\colon i\in J_m \rangle)
\end{eqnarray*}
holds. If $m = n$, we obtain
\begin{eqnarray*}
\mathcal K(t)(\langle x_i\colon i\in J_n \rangle \vee \langle y_i\colon i\in J_n \rangle)
&=&\langle (x\vee y)_{t(i)}\colon i\in I_n \rangle\\
&=&\langle x_{t(i)}\colon i\in I_n\rangle\vee \langle y_{t(i)}\colon i\in I_n \rangle\\
&=&\mathcal K(t)(\langle x_i\colon i\in J_n \rangle) \vee \mathcal K(t)(\langle y_i\colon i\in J_n \rangle).
\end{eqnarray*}
Analogously we can prove that the mapping $\mathcal K(t)$ preserves infimas.

To prove that the mapping $\mathcal K(t)$ preserves product, we compute
\begin{eqnarray*}
\mathcal K(t)(\langle x_i\colon i\in J_n \rangle \cdot \langle y_i\colon i\in J_n \rangle)
&=&\mathcal K(t)(\langle x_{\kappa (i)}\cdot y_i \colon i\in J_{n+m} \rangle)  \\
&=&\langle x_{\kappa^m t (i)}\cdot y_{t(i)} \colon i\in I_{n+m} \rangle  \\
&=&\langle x_{t \lambda^m (i)}\cdot y_{t(i)} \colon i\in I_{n+m} \rangle  \\
&=&\langle x_{t(i)}\colon i\in I_n\rangle\cdot \langle y_{t(i)}\colon i\in I_m \rangle\\
&=&\mathcal K(t)(\langle x_i\colon i\in J_n \rangle) \cdot \mathcal K(t)(\langle y_i\colon i\in J_m \rangle).
\end{eqnarray*}
Moreover, preservation of the unit $1$ is clear.

We prove preservation of residuals. Let us have $\langle x_i\colon i\in J_n \rangle,\langle y_i\colon i\in J_m \rangle\in \textstyle{K}_{J_0,J_1}^\kappa (\mathbf G)$ be such that $m\leq n$. Then
\begin{eqnarray*}
\mathcal K(t)(\langle y_i \colon i\in J_m\rangle \ld  \langle x_i \colon i\in J_n\rangle)&=&\mathcal K(t) (\langle z_i\colon i\in J_{n-m}\rangle)\\
&=&\langle z_{t(i)}\colon i\in I_{n-m}\rangle,
\end{eqnarray*}
where
$$
z_j=\left\{\begin{array}{lll}
x_{\kappa^{n-m}(j)}\ld y_j &\mbox{ if }& j\in J_n\\
e &\mbox{ if }& j\not\in J_n\\
\end{array}
\right.
$$
and thus also
$$
z_{t(i)}=\left\{\begin{array}{lll}
x_{\kappa^{n-m}t(i)}\ld y_{t(i)} &\mbox{ if }& t(i)\in J_n\\
e &\mbox{ if }& t(i)\not\in J_n.\\
\end{array}
\right.
$$
On the other hand,
\begin{eqnarray*}
\mathcal K(t)(\langle y_i \colon i\in J_m\rangle) \ld  \mathcal K(t)(\langle x_i \colon i\in J_n\rangle)&=&\langle y_{t(i)} \colon i\in I_m\rangle \ld \langle x_{t(i)} \colon i\in I_n\rangle\\
&=&\langle w_{i}\colon i\in I_{n-m}\rangle,
\end{eqnarray*}
where
$$
w_i=\left\{\begin{array}{lll}
x_{t\lambda^{n-m}(i)}\ld y_{t(i)} &\mbox{ if }& i\in I_n\\
e &\mbox{ if }& i\not\in I_n.\\
\end{array}
\right.
$$
Lemma \ref{TranLemma1} shows that $i\in I_n$ if and only if $t(i)\in J_n$ and consequently $z_{t(i)}=w_i$ for any $i\in I_{n-m}$. We have proved
$$\mathcal K(t)(\langle y_i \colon i\in J_m\rangle \ld  \langle x_i \colon i\in J_n\rangle)=\mathcal K(t)(\langle y_i \colon i\in J_m\rangle) \ld  \mathcal K(t)(\langle x_i \colon i\in J_n\rangle).$$

Analogously to the previous case it satisfies
\begin{eqnarray*}
\mathcal K(t)(\langle x_i \colon i\in J_n\rangle/\langle y_i \colon i\in J_m\rangle)&=&\mathcal K(t) (\langle z_i\colon i\in J_{n-m}\rangle)\\
&=&\langle z_{t(i)}\colon i\in I_{n-m}\rangle,
\end{eqnarray*}
where
$$
z_j=\left\{\begin{array}{lll}
x_{\kappa^{-m}(j)}/ y_{\kappa^{-m}(j)} &\mbox{ if }& j\in \kappa^m(J_n)\\
e &\mbox{ if }& j\not\in \kappa^m(J_n).\\
\end{array}
\right.
$$
and thus also
$$
z_{t(i)}=\left\{\begin{array}{lll}
x_{\kappa^{-m}t(i)}/ y_{\kappa^{-m}t(i)} &\mbox{ if }& t(i)\in \kappa^m(J_n)\\
e &\mbox{ if }& t(i)\not\in \kappa^m(J_n).\\
\end{array}
\right.
$$
On the other hand,
\begin{eqnarray*}
\mathcal K(t)(\langle x_i \colon i\in J_n\rangle) /  \mathcal K(t)(\langle y_i \colon i\in J_m\rangle)&=&\langle x_{t(i)} \colon i\in I_n\rangle / \langle y_{t(i)} \colon i\in I_m\rangle\\
&=&\langle w_{i}\colon i\in I_{n-m}\rangle,
\end{eqnarray*}
where
$$
w_i=\left\{\begin{array}{lll}
x_{t\lambda^{-m}(i)}/ y_{t\lambda^{-m}(i)} &\mbox{ if }& i\in \lambda^m(I_n)\\
e &\mbox{ if }& i\not\in \lambda^m(I_n).\\
\end{array}
\right.
$$
Lemma \ref{TranLemma4} shows that $i\in \lambda^m(I_n)$ if and only if $t(i)\in \kappa^m(J_n)$ and consequently $z_{t(i)}=w_i$ for any $i\in I_{n-m}$. We have proved
$$
\mathcal K(t)(\langle x_i \colon i\in J_n\rangle /  \langle y_i \colon i\in J_m\rangle)=\mathcal K(t)(\langle x_i \colon i\in J_n\rangle) /  \mathcal K(t)(\langle y_i \colon i\in J_m\rangle).
$$
Finally, we have established that $\mathcal K(t)$ is a homomorphism from the kite $\textstyle{K}_{J_0,J_1}^\kappa (\mathbf G)$ into the kite $\textstyle{K}_{I_0,I_1}^\lambda (\mathbf G)$.
\end{proof}

We note that we do not know general conditions to characterize a homomorphism from one kite over $\mathbf G$ into another one over the same $\mathbf G$.

\section{Conclusion}

In the paper we have presented a construction how from an integral residuated lattice $\mathbf G$ and with an injection of one subset into another one we can build up a new integral residuated lattice. The shape of the resulting algebra resembles a Chinese cascade kite, therefore, we call simply this new algebra a kite, see Theorem \ref{th:3.1}. We have presented subdirectly irreducible kites, Theorem \ref{th:5.4}, and we classified finite-dimensional kites by Theorem \ref{th:5.8}, as well as infinitely countable-dimensional kites in Theorem \ref{th:5.11}. We have showed that the variety of integral residuated lattices generated by kites is generated by the class of finite-dimensional kites, see Theorem \ref{th:6.6}. Finally we have showed a simple condition, a frame, which describes a homomorphism from one kite over $\mathbf G$ into another kite over the same $\mathbf G$, Theorem \ref{th:hom}.

The presented paper enriches the class of integral residuated lattices starting from one integral residuated lattice using two sets and an injection from one set into another one.

\end{document}